\UseRawInputEncoding
\documentclass[12pt]{amsart}
\usepackage{epsfig, color, amsmath, esint, hyperref, mathrsfs, xcolor, bm, enumitem, mathtools, comment, amsfonts, amssymb}

\usepackage[backend=biber,style=alphabetic]{biblatex}
\hypersetup{colorlinks}
\hypersetup{citecolor=blue}
\hypersetup{urlcolor=blue}
\makeatother
\theoremstyle{definition}
\def\fnum{equation} 
\newtheorem{Thm}[\fnum]{Theorem}
\newtheorem{Cor}[\fnum]{Corollary}

\newtheorem{Lem}[\fnum]{Lemma}

\newtheorem{Exa}[\fnum]{Example}
\newtheorem{Rem}[\fnum]{Remark}
\newtheorem{Pro}[\fnum]{Proposition}
\newtheorem{Def}[\fnum]{Definition}
\newtheorem{Not}[\fnum]{Notation}
\newtheorem{mainthm}{Theorem}

\numberwithin{equation}{section}

\newcommand{\inte}{{\text {int}}}

\newcommand{\Id}{\text{Id}}
\newcommand{\iso}{\text{Iso}}
\newcommand{\vv}{\mathrm{v}}
\newcommand{\ww}{\mathrm{w}}
\newcommand{\xx}{\mathrm{x}}
\newcommand{\yy}{\mathrm{y}}
\newcommand{\zz}{\mathrm{z}}
\newcommand{\ad}{\mathrm{ad}}
\newcommand{\Ad}{\mathrm{Ad}}
\newcommand{\vol}{\text{vol}}

\newcommand{\ric}{\text{Ric}}

\title{Convergence of symmetries:   
nilpotency,  dimension, and perfectness}
\author{Sergio Zamora}
\address{\parbox{\linewidth}{Sergio Zamora\\ Beijing International Center for Mathematical Research\\  zamorabs@bicmr.pku.edu.cn}}

\begin{document}

\maketitle

\begin{abstract}
Let  $(X_i,p_i)$  be a sequence of pointed $n$-dimensional Riemannian manifolds with a uniform lower Ricci curvature bound, and  $G_i \leq \iso (X_i)$ a sequence of closed groups of isometries.

We show that if the triples $(X_i, G_i, p_i)$ converge in the equivariant Gromov--Hausdorff sense to a triple $(X,G,p)$, then $\mathfrak{g}$, the Lie algebra of $G$, admits an ideal $\mathfrak{h} \trianglelefteq \mathfrak{g}$ with $\dim (\mathfrak{h}) \leq  \limsup_i \dim (G_i)$ and $\mathfrak{g}/ \mathfrak{h}$ nilpotent. Moreover, if the sequence $(X_i, p_i ) $ is non-collapsing, we show that $\mathfrak{h} $ can be taken of dimension $\limsup_i \dim (G_i)$.

\end{abstract}

{
\hypersetup{linkcolor=black}
\tableofcontents
}

\section{Introduction}

Equivariant Gromov--Hausdorff convergence was introduced by Fukaya in \cite{fukaya} and has proven to be a  successful tool for studying spaces  with lower curvature bounds (see for example \cite{fukaya-yamaguchi, kapovitch-petrunin-tuschmann, kapovitch-wilking, rong}). In this paper, we specialize in the following setting. 

Let  $(X_i,p_i)$  be a sequence of pointed complete $n$-dimensional Riemannian manifolds with $\ric (X_i) \geq - (n-1)$, and  $G_i \leq \iso (X_i)$ a sequence of closed groups of isometries. After passing to a subsequence, we can assume the triples $(X_i, G_i, p_i)$ converge in the equivariant Gromov--Hausdorff sense to a triple $(X, G, p)$.  Hereafter we will denote this by 
\[   (X_i, G_i , p_i) \xrightarrow{eGH} (X,G,p) .     \]

The analytic, geometric, and topological properties of $X$ have been studied extensively (see for example  \cite{colding, cheeger-colding-i, cheeger-colding-ii, cheeger-colding-iii, sormani-wei-i, sormani-wei-ii, colding-naber, cheeger-naber, pan-rong, pan-wei, pan-wang, wang}).  In order to further investigate such properties, and to relate them to the corresponding geometric and topological features of the spaces $X_i$, an important problem is to understand the relationship between the groups $G_i$ and $G$. Let us recall some known results:
\begin{enumerate}
    \item Colding and Naber \cite{colding-naber} showed that the isometry group $\iso (X)$ is a Lie group, hence so is  $G$. This result was proven earlier in \cite{fukaya-yamaguchi-lie} assuming the manifolds $X_i$ have a uniform lower sectional curvature bound. \label{known-i}
    \item When the orbits $ G_i \cdot p_i $ are uniformly bounded, Grove, Karcher, and Ruh \cite{grove-karcher-ruh} showed that the maps $\phi _i : G_i \to G$ demonstrating the convergence can be taken to be continuous group homomorphisms. This was done assuming $G$ is a Lie group, which is always true due to \eqref{known-i}. See also \cite{mazur-rong-wang, harvey, alattar} for related results and applications in differential geometry.\label{known-ii}
    \item When the sequence $(X_i, p_i)$ is non-collapsing, Pan and Rong \cite{pan-rong} showed that the sequence $\iso (X_i)$ has the no small subgroup property. That is, if $ \vol (B_1(p_i)) \geq v$  for some $v >0$, the orbits $ G_i \cdot p_i $ are uniformly bounded,  and $G$ is trivial, then $G_i$ is also trivial for  $i$ large enough.\label{known-pan-rong}
    \item When the groups $G_i$ are discrete, Breuillard, Green, and Tao \cite{breuillard-green-tao} gave a very explicit description of the structure of the groups $G_i$ near the identity. One consequence of this structure is that the identity component of $G$ is nilpotent. See also \cite{turing, kapovitch-wilking, wang-nilpotent} for related results.\label{known-iii}
    \item When the sequence $(X_i, p_i)$ is non-collapsing,  N\'u\~nez-Zimbr\'on, Santos-Rodr\'iguez, and the author \cite{nsz} proved that the symmetry degree is upper semi-continuous. That is, if $ \vol (B_1(p_i)) \geq v$ for some $v >0$, then 
    \[       \dim (G) \geq \limsup_{i\to \infty} \dim (G_i)    .  \]  
    \label{known-iv} 
\end{enumerate}

The main result of this paper unifies the nilpotency of  \eqref{known-iii} and the dimension upper semi-continuity of \eqref{known-iv} into a single result.

\begin{mainthm}\label{thm:main}
Let  $(X_i,p_i)$  be a sequence of pointed complete $n$-dimensional Riemannian manifolds with $\ric (X_i) \geq - (n-1)$,  $G_i \leq \iso (X_i)$ a sequence of closed groups of isometries, and assume
\begin{equation}\label{eq:egh}
(X_i,G_i, p_i) \xrightarrow{eGH} (X,G,p) .      
\end{equation}
Then $\mathfrak{g}$, the Lie algebra of $G$, admits an ideal $\mathfrak{h} \trianglelefteq \mathfrak{g}$ with $\mathfrak{g}/\mathfrak{h}$ nilpotent and such that
\begin{equation}\label{eq:h-ineq}
    \dim ( \mathfrak{h}) \leq \limsup _{i \to \infty } \dim (G_i)  .   
\end{equation}
   Moreover, if $\vol (B_1(p_i))\geq v$ for some $v > 0 $, then $\mathfrak{h}$ can be taken so that equality in \eqref{eq:h-ineq} holds.
    \end{mainthm}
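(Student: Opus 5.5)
We first reduce to the case where the orbits are bounded and $G$ is replaced by a compact-generated neighborhood of the identity. Since $\mathfrak{g}$ only depends on the identity component $G_0$, we may restrict attention to small elements: for each $r>0$ let $G_i^r$ denote the subgroup generated by elements of $G_i$ displacing $p_i$ by less than $r$. The identity components $(G_i)_0$ are closed connected Lie groups (Myers--Steenrod) of dimension $\dim(G_i)$, and after passing to a subsequence realizing the $\limsup$ and further subsequences we may assume $(X_i,(G_i)_0,p_i)\xrightarrow{eGH}(X,K,p)$ for a closed subgroup $K\le G$. The candidate ideal will be built from $\mathfrak{k}$, the Lie algebra of $K$. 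The plan has two halves: first show $\dim\mathfrak{k}\le\limsup\dim(G_i)$ (with equality in the non-collapsed case), then show that the ideal $\mathfrak{h}$ generated by $\mathfrak{k}$, or rather a suitable normalizing modification, has nilpotent quotient.

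For the dimension bound, I would use that connected Lie groups acting isometrically converge to groups whose dimension cannot exceed the limit dimension: locally, $(G_i)_0$ near the identity is a small neighborhood of a Lie group of dimension $d_i$, and by the Grove--Karcher--Ruh result \eqref{known-ii} (applied to compact pieces after using an almost-homomorphism argument) a limit of $d$-dimensional Lie group germs is a Lie group germ of dimension at most $d$; one sees this by producing, via Gleason--Yamabe style escape-norm arguments, approximate one-parameter subgroups, and noting that $K$ is the image of the limit of Lie algebras $\mathfrak{g}_i$, a $d$-dimensional space modulo collapse of directions. In the non-collapsed case, the Pan--Rong no-small-subgroups property \eqref{known-pan-rong} prevents directions of $\mathfrak{g}_i$ from collapsing (a collapsing direction would give nontrivial small subgroups converging to the identity), giving $\dim\mathfrak{k}=\limsup\dim G_i$, consistent with \eqref{known-iv}.

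For nilpotency, note $(G_i)_0\trianglelefteq G_i$, so $K$ is normalized by $G$ and $\mathfrak{h}:=\mathfrak{k}$ is an ideal of $\mathfrak{g}$. The quotient $G_i/(G_i)_0$ is discrete, and the pair $(G_i,(G_i)_0)$ converges to $(G,K)$; one wants to realize $G/K$ (locally) as an equivariant limit of discrete groups $G_i/(G_i)_0$ acting on quotient spaces $X_i/(G_i)_0$. Those quotients are no longer manifolds, so \eqref{known-iii} does not apply verbatim; instead I would apply the Breuillard--Green--Tao structure theorem directly to the approximate groups formed by the small-displacement sets of $G_i/(G_i)_0$ with respect to the quotient metric, using only the doubling property inherited from the Bishop--Gromov volume comparison on $X_i$ (the quotient measure is still doubling on balls). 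The BGT theorem yields that these approximate groups are, up to finite normal subgroups, nilprogressions, so their limit $G/K$ has nilpotent identity component near the identity, hence $\mathfrak{g}/\mathfrak{h}$ nilpotent. Finite normal subgroups in the limit become compact subgroups, which could contribute a non-nilpotent compact part; to handle this I would enlarge $\mathfrak{h}$ only in the collapsed case — but that would break the dimension bound, so instead I would argue these finite subgroups must shrink to the identity by a Turing/Kapovitch--Wilking style argument.

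The main obstacle is this last step: making the passage to the quotient rigorous, i.e., proving that small discrete subgroups of $G_i/(G_i)_0$ (the finite normal pieces in BGT) cannot converge to a nontrivial compact non-nilpotent part of $G/K$, and controlling the doubling property on the singular quotient. I would first attempt this by lifting to $G_i$ and using the Jordan-type bound on compact subgroups near the identity, so that any such limit is forced into $K$.
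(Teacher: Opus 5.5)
There is a genuine gap in the first half of your plan. You claim that if $(X_i,(G_i)_0,p_i)\xrightarrow{eGH}(X,K,p)$, then $\dim\mathfrak{k}\le\limsup_i\dim(G_i)$, with equality in the non-collapsed case. This is false, and it is exactly the naive step the paper warns against.

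Here is a counterexample. Take flat cylinders $X_i=\mathbb{R}\times S^1$ whose circle has length $L_i\to\infty$, so $\ric\equiv 0$ and $\vol(B_1(p_i))=\pi$ eventually. Let $G_i\cong\mathbb{R}$ act by the screw motions $t\cdot(x,\theta)=(x+\epsilon_i t,\theta+t)$ with $\epsilon_i L_i\to 0$. Each $G_i$ is a closed, connected, one-dimensional group of isometries, and NSS holds by non-collapsing. Yet the elements $t=b+kL_i$ act as translations by $(\epsilon_i b+\epsilon_i kL_i,\,b)$, and these fill out all translations in the limit. So $K=G=\mathbb{R}^2$ and $\dim\mathfrak{k}=2>1$.

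The error is identifying $K$ with ``the limit of the Lie algebras $\mathfrak{g}_i$''. The connected group $(G_i)_0$ contains elements $\exp(t\vv_i)$ with $t$ far beyond the escape time that return near the identity in displacement, and these produce extra limit directions. NSS only prevents directions from collapsing, not this wrapping. Grove--Karcher--Ruh needs uniformly bounded orbits, which $(G_i)_0$ does not have.

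The missing idea is to work only with identity germs, which the paper does as follows.
\begin{itemize}
\item It first passes to the NSS case by quotienting $\langle A_i\rangle$ by the maximal small subgroups, which can only lower dimension.
\item It uses the Gleason lemmas to put inner products $\Vert\cdot\Vert_i$ on $\mathfrak{g}_i$ that are comparable to the escape norm and have uniformly bounded structure constants.
\item It defines $H$ as the Hausdorff limit of $\phi_i(\exp\{\Vert\vv\Vert_i\le 1\})$, cut down to a small neighborhood. A quantitative BCH lemma and a bound on $\Ad$ show $H$ is a normal sub-local group, and $\mathfrak{h}$ is its Lie algebra.
\item $\dim\mathfrak{h}\le\dim G_i$ follows from uniform coordinates of the second kind plus Sard. $\dim\mathfrak{h}\ge\dim G_i$ follows from NSS local good approximations $H_i\to H$.
\end{itemize}

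Your nilpotency step has the same defect. In the example above, $G_i/(G_i)_0$ is trivial, while the correct $\mathfrak{g}/\mathfrak{h}$ is one-dimensional. So no global quotient group or quotient space $X_i/(G_i)_0$ can see $\mathfrak{g}/\mathfrak{h}$. You need the local group quotients $(G_i/H_i)_{W_i}\to(G/H)_W$, which are discrete because $H_i$ is open in $G_i$. Then you apply Breuillard--Green--Tao in the form of good models for local groups.

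Separately, the doubling argument on $X_i/(G_i)_0$ is not available as stated. When the orbits of $(G_i)_0$ are noncompact, the pushed-forward volume of every ball is infinite.
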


\begin{Rem}
When the groups $G_i$ are discrete, \eqref{eq:h-ineq} implies that $\mathfrak{h}$ is trivial, meaning that $\mathfrak{g} = \mathfrak{g} / \mathfrak{h}$ is nilpotent, and so is the identity component of $G$.  When $\vol (B_1(p_i))\geq v$ for some $v > 0 $, equality in \eqref{eq:h-ineq} implies  
\[ \dim (G) \geq \dim (\mathfrak{h}) =  \limsup_{i \to \infty} \dim (G_i), \] 
recovering \eqref{known-iv}. On one hand,  this means that Theorem \ref{thm:main} generalizes both the nilpotency of  \eqref{known-iii} and the dimension upper semi-continuity of \eqref{known-iv}. On the other hand, we note that the proof of Theorem \ref{thm:main} uses both \eqref{known-iii} and \eqref{known-iv}. 
\end{Rem}

\begin{Rem}
In the particular case when the maps $\phi_i : G_i \to G$ demonstrating the convergence \eqref{eq:egh} can be taken to be continuous group homomorphisms (for instance, in the setting studied in \cite{grove-karcher-ruh, harvey, alattar}), Theorem \ref{thm:main-good} follows easily from  the arguments in  \cite{breuillard-gelander}. 

However, this is rarely the case when the limit group is not compact. For example, for any Lie group $G$ and any Riemannian metric $g_0 $ on $G$, if $X_i : = (G, i g_0)$ for each $i \in \mathbb{N}$, then $(X_i , G , e) \xrightarrow{eGH} (\mathbb{R}^n , \mathbb{R}^n , 0)$ with $n = \dim (G)$, but many Lie groups $G$ do not admit non-trivial group homomorphisms $G \to \mathbb{R}^n$.

\end{Rem}

\subsection{Good approximations} The main tool in the proof of Theorem \ref{thm:main} is good approximations,  introduced in \cite{nsz} and inspired by \cite{turing, hrushovski}. They allow us to study the maps $\phi _ i : G_i \to G$ demonstrating the equivariant Gromov--Hausdorff convergence without making explicit reference to the spaces the groups act on. 

\begin{Not}
For a subset $A$ of a  group  $G$, we denote by $A^n$ the set of elements $a_1 \cdots a_n \in G$ with $a_j \in A$ for each $j$. We say that $A$ is \emph{symmetric} if it contains the identity and is closed under inverses.   
\end{Not}

\begin{Def}[Good approximations]\label{def:ga-old}
    Let $G_i$ be a sequence of locally compact Hausdorff groups and $G$ a locally compact Hausdorff group. We say a sequence of functions $\phi _ i : G_i \to G$ consists of \emph{good approximations} if there are symmetric, open, pre-compact sets $A_i \subset G_i$, $A \subset G$ such that:
    \begin{enumerate}[label=\Roman*]
        \item (Almost surjectivity) For all $U \subset A $ open nonempty, one has $\phi_i (A_i) \cap U \neq \emptyset$ for $i$ large enough.\label{item:ga-1}
        \item (No expansion) For all $V \subset G$ open with $\overline{A} \subset V$, one has $\phi_i (A_i) \subset V$ for $i$ large enough.\label{item:ga-2}
        \item (Almost homomorphism) For each $n \in \mathbb{N}$ and each identity neighborhood $U \subset G$, there is $i_0 \in \mathbb{N}$ such that if $i \geq i_0 $ and   $g, h \in A_i ^n $, then 
        \[ [ \phi_i (gh)^{-1} \phi _i (g) \phi_i (h) ]  \in U    . \] \label{item:ga-3}
        \item (No compression) For each $n \in \mathbb{N}$ and each compact $K\subset A$, one has 
        \[    \phi_i ^{-1} (K) \cap A_i^n \subset A_i        \] 
        for $i$ large enough. \label{item:ga-4} 
        \item (Almost continuity) For each identity neighborhood $U \subset G$, there is a sequence of identity neighborhoods $U_i \subset G_i$ with $\phi_i (U_i) \subset U $ for $i$ large enough.\label{item:ga-5}
    \end{enumerate}
    If properties (I--V) hold, we call the sets $A_i$ and $ A$ \emph{regular neighborhoods} with respect to the approximations $\phi_i$. 
\end{Def}

In the setting of good approximations, there is a notion of small subgroups, which by \cite[Theorem C]{nsz} coincides with the geometric notion of small subgroups defined in  \cite{pan-rong}. Both of these notions are inspired by the term \emph{small subgroups} from the theory of topological groups \cite{gleason}.

\begin{Def}[Small subgroups]\label{def:ss2}
    Let $\phi_i : G_i \to G$ be a sequence of good approximations with regular neighborhoods $A_i \subset G_i$. We say a sequence of subgroups $H_i \leq G_i$ is \emph{small} or \emph{consists of small subgroups} with respect to the pairs $(\phi_i, A_i)$  if $H_i \subset A_i$ for $i$ large enough, and $ \phi_i (h_i) \to e $ for any sequence $h_i \in H_i . $

    We say the sequence $G_i$  has the \emph{no small subgroup} (NSS) property or \emph{doesn't admit small subgroups} with respect to the pairs $(\phi_i, A_i)$ if any sequence of small subgroups $H_i \leq G_i$ with respect to the pairs $(\phi_i, A_i)$ is eventually trivial. 
\end{Def}

\begin{Rem}
When no confusion can arise, we omit the dependence on the pairs $(\phi_i, A_i)$ when talking about sequences of small subgroups.    
\end{Rem}

The following result confirms that maps demonstrating  equivariant Gromov--Hausdorff convergence are good approximations. 

\begin{Thm}\label{thm:egh-to-ga} \cite{nsz, pan-rong}.   Let $(X_i, p_i)$, $(X,p)$ be pointed proper metric spaces and $G_i \leq \iso (X_i)$, $G \leq \iso (X)$ closed groups of isometries such that 
    \[     (X_i, G_i, p_i) \xrightarrow{eGH} (X,G,p) .  \]
    Then the maps $\phi_i : G_i \to G$ demonstrating the  convergence are good approximations.  Moreover, if $(X_i, p_i)$ are $n$-dimensional Riemannian manifolds with $\ric (X_i) \geq - (n-1)$ and $\vol (B_1(p_i)) \geq v$ for some $v>0$, then the regular neighborhoods can be taken so that the  sequence $G_i$ has the NSS property. 
\end{Thm}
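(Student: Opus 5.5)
We must prove Theorem \ref{thm:egh-to-ga}: the maps demonstrating eGH convergence are good approximations, and in the non-collapsed Ricci setting the regular neighborhoods can be chosen so that the NSS property holds.

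\textbf{Setup.} Recall that the convergence is witnessed by $\varepsilon_i$-approximations $f_i : B_{1/\varepsilon_i}(p_i) \to X$ with $\varepsilon_i \to 0$, together with maps $\phi_i : G_i \to G$. These satisfy $d(f_i(g x), \phi_i(g) f_i(x)) < \varepsilon_i$ whenever $d(x,p_i), d(gx,p_i) < 1/\varepsilon_i$ and $d(g p_i , p_i) < 1/\varepsilon_i$, and symmetrically for $G$.

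I would fix a radius $R>0$ and take
\[ A_i = \{ g \in G_i : d(gp_i,p_i) < R\}, \qquad A = \{ g \in G : d(gp,p)<R \}. \]
These sets are symmetric and open. They are pre-compact by properness of the spaces, via Arzel\`a--Ascoli applied to isometries moving the basepoint a bounded amount. The radius $R$ should be chosen generic, so that $\{g : d(gp,p)=R\}$ has empty interior in $G$.

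\textbf{Verifying (I)--(V).}
\begin{enumerate}[label=(\Roman*)]
\item Almost surjectivity comes from the symmetric half of the equivariant approximation: every $g\in A$ is approximated by some $\phi_i(g_i)$. If $g$ lies in the open set $U$, then $d(gp,p)<R$ strictly, so $g_i\in A_i$ for $i$ large.
\item No expansion follows because $d(\phi_i(g)p,p)\le d(gp_i,p_i)+2\varepsilon_i \le R+2\varepsilon_i$. The set $\{g : d(gp,p)\le R+\delta\}$ shrinks to a compact set contained in any $V\supset\overline{A}$ as $\delta\to0$; here I use the genericity of $R$ so that this limiting set is $\overline A$.
\item The almost homomorphism property holds because $\phi_i(gh)$ and $\phi_i(g)\phi_i(h)$ act within $O(\varepsilon_i)$ of each other on any fixed ball, since $g,h\in A_i^n$ move $p_i$ at most $nR$. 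In $G$, elements that act nearly identically on a large ball are close to each other in the compact-open topology.
\item For no compression, take $g\in A_i^n$ with $\phi_i(g)\in K\subset A$. Then $d(gp_i,p_i)\le d(\phi_i(g)p,p)+2\varepsilon_i\le \max_K d(\cdot\, p,p)+2\varepsilon_i<R$.
\item For almost continuity, take $U_i = \{g : d(gx,x)<\delta \ \forall x\in B_{1/\delta}(p_i)\}$, which is open in the compact-open topology. The images $\phi_i(U_i)$ then move every point of $B_{1/\delta-1}(p)$ by less than $2\delta$, so they lie in $U$ once $\delta$ is small.
\end{enumerate}

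\textbf{NSS in the non-collapsed case.} Suppose $H_i\le G_i$ is a small sequence. Then $H_i$ is closed and its orbit $H_i p_i$ lies in a bounded set, so $H_i$ is compact. Smallness of $\phi_i(h_i)\to e$ means that
\[ \sup_{h\in H_i}\ \sup_{x\in B_r(p_i)} d(hx,x)\to0 \quad \text{for every fixed } r. \]
This is precisely the geometric notion of small subgroups in \cite{pan-rong}. The Pan--Rong result, item \eqref{known-pan-rong} in the introduction, then says that when $\vol(B_1(p_i))\ge v$ such groups are eventually trivial.

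The main obstacle is matching the two notions of smallness. One direction is needed: if $\phi_i(h_i)\to e$ for every sequence $h_i \in H_i$, then the $H_i$ have uniformly small displacement on fixed balls. Since $d(hx,x)$ is comparable to $d(\phi_i(h)f_i(x),f_i(x))$ up to $2\varepsilon_i$, I would argue by contradiction. A bad sequence $h_i$ with displacement at least $\eta$ at some point of $B_r(p_i)$ would, after passing to a limit, give $\phi_i(h_i)$ converging to a non-identity element. This contradicts smallness.

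If needed, compactness of $H_i$ lets one pass to limit subgroups, which is the equivalence recorded in \cite[Theorem C]{nsz}.
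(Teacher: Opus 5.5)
\textbf{Overall.} Your decomposition matches what the paper relies on. The paper gives no proof of Theorem~\ref{thm:egh-to-ga}; it cites \cite{nsz} for the good-approximation properties and \cite{pan-rong} for NSS, with the passage between the two notions of smallness supplied by \cite[Theorem C]{nsz}. Your checks of (I), (III), (IV) and (V) are sound. So is your reduction of NSS to the Pan--Rong theorem: smallness forces uniformly small displacement on every fixed ball, hence $(X_i,H_i,p_i)\to(X,\{e\},p)$ with orbits of radius at most $R$.

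\textbf{Minor fix in the NSS part.} Small subgroups are not assumed closed, so you cannot assert that $H_i$ is closed. Apply Pan--Rong to $\overline{H_i}$ instead. It still has orbits of radius at most $R$ and uniformly small displacement on fixed balls, because displacement is continuous in $h$. Triviality of $\overline{H_i}$ then gives triviality of $H_i$.

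\textbf{The genuine gap: the genericity condition on $R$ in (II).} What (II) actually needs is $\overline{A}=\{g : d(gp,p)\le R\}$. Equivalently, every $g$ with $d(gp,p)=R$ must be a limit of elements with $d(gp,p)<R$. This fails exactly when $R$ is a local minimum value of $F(g):=d(gp,p)$, and that can happen while the level set $\{F=R\}$ has empty interior.

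Here is a counterexample satisfying your condition.
\begin{itemize}
\item Let $G=\mathbb{R}\times\mathbb{Z}$ act on $X=\mathbb{R}^2$ by translations, with $p=0$ and $R=1$.
\item Then $\{F=1\}=\{(\pm 1,0),(0,\pm 1)\}$ has empty interior in $G$.
\item However, $\overline{A}=[-1,1]\times\{0\}$ does not contain $(0,1)$.
\item Take $G_i=\mathbb{R}\times(1-\tfrac{1}{i})\mathbb{Z}$ acting on $X_i=\mathbb{R}^2$, with $f_i$ the identity and $\phi_i(s,(1-\tfrac{1}{i})n)=(s,n)$. The triples converge.
\item We have $(0,1-\tfrac{1}{i})\in A_i$, but $\phi_i(0,1-\tfrac{1}{i})=(0,1)$ lies outside $V:=\mathbb{R}\times\{0\}$.
\item Since $V$ is an open set containing $\overline{A}$, (II) fails.
\end{itemize}

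\textbf{How to repair it.} Choose $R$ that is not a local minimum value of $F$. Such values form a countable set: $G$ is second countable, and each local minimum value equals $\inf_U F$ for some basic open set $U$. For any other $R>0$ one gets $\overline{A}=\{F\le R\}$, and your compactness argument for (II) then goes through unchanged.
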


Using Theorem \ref{thm:egh-to-ga}, Theorem \ref{thm:main} becomes an easy consequence of the following result about good approximations.

\begin{Thm}\label{thm:main-good}  Let $G_i$ be a sequence of Lie groups and  $\phi _i : G_i \to G$ a sequence of good approximations. If $G$ is a Lie group, then its Lie algebra $\mathfrak{g}$ admits an ideal $\mathfrak{h} \trianglelefteq \mathfrak{g}$ with $\mathfrak{g}/ \mathfrak{h}$ nilpotent and such that 
    \begin{equation}\label{eq:h-ineq-2}
         \text{dim} (\mathfrak{h}) \leq \limsup_{i \to \infty} \text{dim} (G_i) .  
    \end{equation}
    Moreover, if the sequence $G_i$ has the NSS property, then $\mathfrak{h}$ can be taken so that equality in \eqref{eq:h-ineq-2} holds.
\end{Thm}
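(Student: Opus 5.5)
The plan is to reduce to two known inputs: the discrete case (nilpotency, \eqref{known-iii}, via the Breuillard--Green--Tao structure theory adapted to good approximations in \cite{nsz}), and the NSS case (dimension semicontinuity, \eqref{known-iv}, proved in \cite{nsz} for good approximations with the NSS property). After passing to a subsequence we may assume $\dim(G_i) = d := \limsup_i \dim(G_i)$ for all $i$, which is harmless since the conclusion concerns only $\mathfrak{g}$ and the limsup.

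\emph{Step 1: quotient by small subgroups.} First I would show that one may take a maximal sequence of small subgroups. Since each $G_i$ is a Lie group, every compact subgroup near the identity is contained in a compact subgroup, and a sequence of small subgroups $H_i \leq G_i$ can be enlarged until the quotients $G_i/H_i$ (after also passing to normal cores, or working with the groups $N_i$ generated by small subgroups inside $A_i$, which are normal by almost-conjugation invariance) have the NSS property with respect to the induced approximations. I would check that the induced maps $G_i/N_i \to G$ are still good approximations: properties \ref{item:ga-1}, \ref{item:ga-2}, \ref{item:ga-5} pass to quotients directly, while \ref{item:ga-3} and \ref{item:ga-4} use that $\phi_i(N_i \cap A_i^n)\to e$ uniformly. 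The small subgroups collapse in the limit, so $G$ remains the limit.

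\emph{Step 2: split the Lie algebra.} With the NSS property in hand (either originally, or after Step 1), the key is to separate the identity components $G_i^0$ from the discrete part $G_i/G_i^0$. Pass to a subsequence so that $G_i^0 \cap A_i$ converges (via $\phi_i$) to a closed subset generating a closed subgroup $H \leq G$; almost-homomorphism and conjugation-invariance of $G_i^0$ make $H$ normal in $G$ near the identity, so its Lie algebra $\mathfrak{h}$ is an ideal. Then the quotient approximations $G_i/G_i^0 \to G/H$ (defined by composing with a section and the projection) are good approximations from discrete groups, and \eqref{known-iii} gives that $\mathfrak{g}/\mathfrak{h}$ is nilpotent. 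For the dimension bound, restrict $\phi_i$ to $G_i^0$, giving good approximations $G_i^0 \to H$; by the NSS-case result of \cite{nsz}, $\dim H \geq d$ under NSS, while the general upper bound $\dim H\le d$ should come from the fact that connected $d$-dimensional Lie groups cannot approximate larger-dimensional groups (a Gleason--Yamabe/escape-norm count). In the general (non-NSS) case, applying this after Step 1 gives $\dim(\mathfrak{h}) \leq \dim(G_i/N_i)^0\le d$.

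\emph{Main obstacle.} I expect the hardest step to be proving that $\dim H \leq d$ and, under NSS, $\dim H = d$: the restriction of good approximations to the subgroups $G_i^0$ need not satisfy no-compression \ref{item:ga-4} (the identity component could wind back into $A_i$ from far away, as in the irrational-line-in-torus phenomenon), so one must choose regular neighborhoods adapted to $G_i^0$, e.g. replacing $G_i^0$ by the subgroup generated by a small Lie-algebra ball, and argue via one-parameter subgroups and escape norms that the limit of these generates exactly $H^0$. I would first try building local one-parameter subgroups in $G$ as limits of $\exp$ of rescaled vectors in $\mathfrak{g}_i$, controlled by a Gleason-type escape estimate, to get both the inequality and, combined with \cite{nsz}, equality.
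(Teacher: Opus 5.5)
Your Step 1 matches the paper's reduction: take the maximal small subgroups from Theorem \ref{thm:largest-small} and pass to the quotient via Lemma \ref{lem:small-quotient}. Step 2, however, is the ``naive'' strategy that the paper's Example \ref{ex:bad-approximation} shows to fail. In that example $G_i=\mathbb{R}$, so NSS holds trivially and $G_i^0=G_i$, while $G=\mathbb{R}^2$. The sets $\phi_i(G_i^0\cap A_i)=\phi_i(A_i)$ converge to the whole square $[-1,1]^2$. So the closed subgroup your $H$ generates is all of $\mathbb{R}^2$, and $\dim\mathfrak{h}=2>1=d$.

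Consequences of this example:
\begin{itemize}
\item The heuristic that connected $d$-dimensional Lie groups cannot approximate larger-dimensional groups is false. Both your upper bound and, under NSS, your equality fail for this $H$.
\item Your proposed repair, replacing $G_i^0$ by the subgroup generated by a small Lie-algebra ball, changes nothing. A connected Lie group is generated by any identity neighborhood.
\item No choice of a genuine subgroup rescues the quotient step either. With the correct $\mathfrak{h}=\mathbb{R}\times\{0\}$ one has $G/H\cong\mathbb{R}$ while $G_i/G_i^0$ is trivial. There are then no good approximations $G_i/G_i^0\to G/H$ (almost surjectivity fails), so the global discrete result \eqref{known-iii} cannot be invoked.
\end{itemize}

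The missing idea is to work with local objects throughout. The paper sets $H_i=\exp(\{\vv\in\mathfrak{g}_i : \Vert\vv\Vert_i\le 1\})\cap U_i$, where $\Vert\cdot\Vert_i$ is an inner-product norm comparable to the escape norm $\vert\cdot\vert_{B_i}$. It takes $H$ to be the Hausdorff limit of $\phi_i(\exp(\{\Vert\vv\Vert_i\le 1\}))$ intersected with a small neighborhood $U$. The Gleason commutator estimate (Theorem \ref{thm:gleason}) gives uniformly bounded structure constants, $\Vert[\xx,\yy]\Vert_i\le 4C_0^2\Vert\xx\Vert_i\Vert\yy\Vert_i$. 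Via the quantitative BCH Lemma \ref{lem:bch-quant} and a bound on $\Ad$, this makes $H_i$ and $H$ normal sub-local groups rather than subgroups. The three required properties then come from separate local arguments:
\begin{itemize}
\item $\dim\mathfrak{h}\le d$ follows from Lemma \ref{lem:lie-ift} (second-kind coordinates cover a ball of definite size) together with Sard's theorem.
\item $\dim\mathfrak{h}\ge d$ follows from local good approximations $H_i\to H$ and the local version of the NSS dimension theorem, Theorem \ref{thm:nsz-local}.
\item Nilpotency follows from local good approximations between the discrete local quotients $(G_i/H_i)_{W_i}\to(G/H)_W$ (Proposition \ref{pro:lga-quotient}) and the local Breuillard--Green--Tao result, Theorem \ref{thm:bgt-local}.
\end{itemize}
None of this local-group machinery appears in your proposal, and without it Step 2 does not go through.
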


\subsection{Perfectness and continuity of dimension}
As an application of our main theorems, we obtain restrictions on the types of symmetries that can converge to a given group action. Recall that a Lie algebra $\mathfrak{g}$ is called \emph{perfect} if $\mathfrak{g} = [\mathfrak{g}, \mathfrak{g}]$ and a Lie group $G$ is called \emph{topologically perfect} if $G = \overline{[G,G]}$.

It is well known that if a sequence of abelian (resp. solvable or nilpotent of fixed step) groups converge to a limit group, then the limit group is also abelian (resp. solvable or nilpotent). The following results  mirror this behavior for perfect groups.

\begin{Cor}\label{cor:no-nilpotent}
Let  $(X_i,p_i)$  be a sequence of pointed complete $n$-dimensional Riemannian manifolds with $\ric (X_i) \geq - (n-1)$ and $\vol (B_1(p_i)) \geq v$ for some $v>0$. Let  $G_i \leq \iso (X_i)$ be a sequence of closed groups of isometries such that 
\[  (X_i,G_i, p_i) \xrightarrow{eGH} (X,G,p) .   \]
If the Lie algebra of $G$ is perfect, then for $i$ large enough the identity component of $G_i$ is topologically perfect and  $ \dim (G_i ) =  \dim (G) $.
\end{Cor}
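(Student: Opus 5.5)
The plan is to deduce both conclusions from the equality case of Theorem \ref{thm:main}, applied along arbitrary subsequences. The second conclusion needs an extra step, which I expect to be the main obstacle.

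\emph{Step 1: dimension.} Since $\vol(B_1(p_i))\geq v$, Theorem \ref{thm:main} gives an ideal $\mathfrak{h}\trianglelefteq\mathfrak{g}$ with $\mathfrak{g}/\mathfrak{h}$ nilpotent and $\dim(\mathfrak{h})=\limsup_i\dim(G_i)$. A quotient of a perfect Lie algebra is perfect. A nilpotent perfect Lie algebra $\mathfrak{n}$ is zero, because its lower central series $\mathfrak{n}\supseteq[\mathfrak{n},\mathfrak{n}]\supseteq\cdots$ is constant and must terminate at $0$. Hence $\mathfrak{h}=\mathfrak{g}$ and $\dim(G)=\limsup_i\dim(G_i)$.

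Every subsequence of $(X_i,G_i,p_i)$ still converges to $(X,G,p)$ and is still non-collapsing. So the same argument shows that $\dim(G)=\limsup_j\dim(G_{i_j})$ for every subsequence $(i_j)$. Taking a subsequence realizing $\liminf_i\dim(G_i)$ gives $\liminf_i\dim(G_i)=\dim(G)$. Since the dimensions are integers, $\dim(G_i)=\dim(G)$ for all large $i$.

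\emph{Step 2: topological perfectness, set-up.} Suppose, after passing to a subsequence, that $G_i^0$ is not topologically perfect for every $i$. Set $K_i:=\overline{[G_i^0,G_i^0]}$. This is a closed connected normal subgroup of $G_i$, and $G_i^0/K_i$ is a nontrivial connected abelian Lie group, so $\dim(K_i)<\dim(G_i^0)=\dim(G)$. Passing to a further subsequence, $(X_i,K_i,p_i)\xrightarrow{eGH}(X,K,p)$ for a closed subgroup $K\leq G$.

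Since $\mathfrak{g}$ is perfect and $G^0$ is connected, the commutator subgroup $[G^0,G^0]$ is the connected subgroup with Lie algebra $[\mathfrak{g},\mathfrak{g}]=\mathfrak{g}$, so $[G^0,G^0]=G^0$. Every element of $G^0$ is a product of boundedly many commutators of elements near the identity. By the almost homomorphism property \eqref{item:ga-3} and almost surjectivity \eqref{item:ga-1}, such commutators are limits of commutators in $G_i^0$, which lie in $K_i$. Hence $K\supseteq G^0$.

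\emph{Step 3: the contradiction (main obstacle).} We now have a sequence of closed normal subgroups $K_i$ of dimension at most $\dim(G)-1$, with abelian quotients $G_i^0/K_i$ of positive dimension, whose limit contains $G^0$. Upper semicontinuity of dimension only gives $\dim(K)\geq\limsup_i\dim(K_i)$, which is no contradiction. The extra dimension must therefore be produced by collapse of the abelian quotients. I would rule this out using the NSS property of Theorem \ref{thm:egh-to-ga}.

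Concretely, choose a one-parameter subgroup, or a circle, in $G_i^0$ that projects nontrivially to $G_i^0/K_i$. Consider the subgroups $L_i:=K_i\cdot\exp(\mathbb{R}v_i)$, which have dimension $\dim(K_i)+1$. Then look at the induced good approximations $G_i^0/K_i\to G^0/(K\cap G^0)=\{e\}$. Abelian groups converging to the trivial group near the identity should contain small subgroups. I would pull these back, via a compact torus in the abelian quotient or via the no-compression property \eqref{item:ga-4} to control $\exp$ of short vectors, to a nontrivial sequence of small subgroups of $G_i$, contradicting NSS.

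If this direct route fails, the alternative is to apply Theorem \ref{thm:main-good} to the pairs $K_i\leq G_i^0$ simultaneously. The aim there would be to show that the ideal produced for $K$ has dimension $\limsup_i\dim(K_i)<\dim(G)$ while containing $[\mathfrak{g},\mathfrak{g}]=\mathfrak{g}$, which is impossible.
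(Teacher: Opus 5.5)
Step 1 is the paper's argument and is fine. The problems are in Steps 2 and 3.

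Your primary route in Step 3 cannot work. If $G_i^0/K_i\cong\mathbb{R}^a$, it has no nontrivial compact subgroups at all. Any subgroup of $G_i$ pulled back from $G_i^0/K_i$ contains $K_i$, which is not small, since its limit contains $G^0$. A circle in $G_i^0$ projecting nontrivially to $G_i^0/K_i$ just converges to a nontrivial compact subgroup of $G$, which is consistent with everything projecting trivially to $G^0/K$. So the collapse of the abelian quotients is not, by itself, in tension with NSS.

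Your fallback is the right idea and closes the argument. The $K_i$ are closed subgroups of $\iso(X_i)$ under the same non-collapsing hypothesis. So apply Theorem \ref{thm:main} directly to $(X_i,K_i,p_i)\xrightarrow{eGH}(X,K,p)$; Theorem \ref{thm:main-good} would instead force you to check NSS for restricted approximations. This gives an ideal $\mathfrak{h}_K\trianglelefteq\mathfrak{k}$ with $\mathfrak{k}/\mathfrak{h}_K$ nilpotent and $\dim(\mathfrak{h}_K)=\limsup_i\dim(K_i)\le\dim(G)-1$. Once $\mathfrak{k}=\mathfrak{g}$, the quotient $\mathfrak{g}/\mathfrak{h}_K$ is a nonzero nilpotent quotient of a perfect Lie algebra, which your Step 1 argument excludes. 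That is the correct reason; ``containing $[\mathfrak{g},\mathfrak{g}]$'' would only follow if the quotient were abelian.

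The input $\mathfrak{k}=\mathfrak{g}$, i.e.\ $K\supseteq G^0$, is where Step 2 has a gap. Almost surjectivity \eqref{item:ga-1} gives approximants in $A_i\subset G_i$, not in $G_i^0$. For $a_i,b_i\notin G_i^0$, the commutator $[a_i,b_i]$ need not lie in $K_i=\overline{[G_i^0,G_i^0]}$. A priori the identity components could converge to a lower-dimensional subgroup while other components fill out $G^0$, as happens when discrete groups converge to a connected one.

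To repair this, pass to a subsequence with $(X_i,G_i^0,p_i)\xrightarrow{eGH}(X,G',p)$ and $G'\le G$ closed. Theorem \ref{thm:main}, or already \eqref{known-iv}, gives $\dim(G')\ge\limsup_i\dim(G_i^0)=\dim(G)$. Hence $G'$ is open in $G$ and contains $G^0$. Now any $a,b\in G^0$ are limits of some $a_i,b_i\in G_i^0$, and $[a_i,b_i]\in K_i$ converges to $[a,b]$. Therefore $K\supseteq\overline{[G^0,G^0]}=G^0$.

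With these repairs you have a complete proof that differs genuinely from the paper's. The paper uses Proposition \ref{pro:seven-samurai} to get proper closed subgroups $H_{i,j}\le G_i^0$ that are nearly dense in $A_i$. It then uses Lemma \ref{lem:restrictions-of-dense} to see that the restrictions of $\phi_i'\colon H_i\to H$ to $H_i\cap H_{i,j(i)}$ are still local good approximations onto all of $H$. Finally it reruns the local version of Theorem \ref{thm:main-nss} (Remark \ref{rem:main-local}). The identity-component issue never arises there, because $H_i\subset\exp(\mathfrak{g}_i)$.

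You instead use the explicit subgroup $\overline{[G_i^0,G_i^0]}$, whose limit is forced to contain $G^0$ by perfectness. You then apply Theorem \ref{thm:main} as a black box to honest closed isometry groups. This avoids local good approximations and the external criterion.
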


\begin{Cor}\label{cor:no-nilpotent-2}
 Let $(X_i, p_i)$, $(X,p)$ be pointed proper metric spaces and $G_i \leq \iso (X_i)$, $G \leq \iso (X)$ closed groups of isometries such that 
    \[     (X_i, G_i, p_i) \xrightarrow{eGH} (X,G,p) .  \]
    Assume  $G$ is a Lie group whose Lie algebra is perfect.   Then there is a sequence of open subgroups $O_i \leq G_i$ and a sequence of small compact normal subgroups $H_i \trianglelefteq O_i $ such that for $i$ large enough, the identity component of $O_i / H_i$ is a topologically perfect Lie group of dimension $  \dim (G)$. 
\end{Cor}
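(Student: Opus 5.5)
We would derive Corollary \ref{cor:no-nilpotent-2} from Theorem \ref{thm:egh-to-ga} and Theorem \ref{thm:main-good}, after manufacturing the NSS property by hand. The maps $\phi_i : G_i \to G$ demonstrating the convergence are good approximations with regular neighborhoods $A_i \subset G_i$, $A \subset G$ (Theorem \ref{thm:egh-to-ga}).

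\textbf{Step 1: reduce to Lie groups with the NSS property.} We take $O_i \leq G_i$ to be the open subgroup generated by $A_i$; it is open because $A_i$ is open and symmetric. Inside $O_i$ we then take $H_i$ to be a maximal small subgroup, or more concretely the subgroup generated by all small subgroups contained in $A_i$. We expect the following:
\begin{itemize}
\item The $H_i$ can be chosen compact and normal in $O_i$.
\item Each $O_i/H_i$ is a Lie group, by a Gleason--Yamabe type argument, since $G_i$ acts properly on a proper space and is locally compact.
\item The induced maps $\bar\phi_i : O_i/H_i \to G$ remain good approximations, with regular neighborhoods the images $\bar A_i$.
\item The sequence $O_i/H_i$ has the NSS property.
\end{itemize}
We expect this step to be carried out in \cite{nsz} via the escape-norm machinery of \cite{breuillard-green-tao}, and we would invoke it directly. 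It is the main obstacle. The delicate points are normality and compactness of $H_i$, and checking that the no-compression property (IV) survives the quotient. If normality fails, we would shrink $O_i$ to the normalizer of $H_i$ intersected with the subgroup generated by $A_i$.

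\textbf{Step 2: apply Theorem \ref{thm:main-good}.} Its NSS clause applied to $\bar\phi_i$ gives an ideal $\mathfrak h \trianglelefteq \mathfrak g$ with $\mathfrak g/\mathfrak h$ nilpotent and $\dim \mathfrak h = \limsup \dim(O_i/H_i)$. A perfect Lie algebra has no nonzero nilpotent quotient. Indeed, if $\mathfrak q = \mathfrak g/\mathfrak h$ is nilpotent and nonzero, then $[\mathfrak q,\mathfrak q] \subsetneq \mathfrak q$, whereas perfectness of $\mathfrak g$ forces $[\mathfrak q,\mathfrak q] = \mathfrak q$. Hence $\mathfrak h = \mathfrak g$, and so $\limsup \dim(O_i/H_i) = \dim G$. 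Applying the same argument to every subsequence yields $\liminf \dim (O_i/H_i) \geq \dim G$. We also need an upper bound $\dim(O_i/H_i)\le \dim G$ eventually. For this we would use the upper semicontinuity of \eqref{known-iv}, which in the good-approximation setting with NSS comes from \cite{nsz} and is exactly the inequality used in the proof of Theorem \ref{thm:main-good}. Together these give $\dim(O_i/H_i) = \dim G$ for large $i$.

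\textbf{Step 3: topological perfectness.} Let $L_i$ be the identity component of $O_i/H_i$, and suppose $\overline{[L_i,L_i]} \neq L_i$ along a subsequence. Then $L_i$ has a nontrivial connected abelian quotient. Consequently the Lie algebra $\mathfrak l_i$ has an ideal $\mathfrak k_i \supseteq [\mathfrak l_i,\mathfrak l_i]$ of codimension $\geq 1$, namely the Lie algebra of the closed normal subgroup $K_i = \overline{[L_i,L_i]}$. We would pass to the quotient groups $L_i/K_i$, which are abelian, and to the subgroups $K_i$, which have dimension $<\dim G$.

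The plan is to use the first, non-NSS part of Theorem \ref{thm:main-good}, applied to the good approximations restricted to $K_i$ (after taking a limit of $K_i$ in $G$). This would produce a subgroup of $G$ whose Lie algebra $\mathfrak k$ is an ideal satisfying:
\begin{itemize}
\item $\mathfrak g/\mathfrak k$ is abelian, because it is a limit of abelian quotients and commutators converge;
\item $\mathfrak k \neq \mathfrak g$, because of the dimension count.
\end{itemize}
This contradicts perfectness of $\mathfrak g$, so $L_i$ is topologically perfect for $i$ large.

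Making the passage to limits of the $K_i$ rigorous is routine but technical. We would handle it by the standard fact that good approximations restrict to closed subgroups along a subsequence.
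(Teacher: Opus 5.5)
Your Steps 1 and 2 coincide with the paper's argument. Theorem \ref{thm:largest-small} and Lemma \ref{lem:small-quotient} already give normality of $H_i$ in $O_i=\langle A_i\rangle$, the Lie property of $O_i/H_i$, and the NSS property, so no normalizer modification is needed. Compactness is automatic, because $H_i$ is closed ($O_i/H_i$ is Hausdorff) and lies in the precompact set $A_i$. You do skip one point: ``small'' in the statement is the geometric notion of Remark \ref{rem:small}, which the paper secures by choosing the approximations via \cite[Theorem C]{nsz}.

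The genuine gap is in Step 3. The claim ``$\mathfrak{k}\neq\mathfrak{g}$ because of the dimension count'' is unsupported. The bound $\dim K_i<\dim G$ says nothing about the dimension of the limit of the $K_i$, because closed subgroups of lower dimension can converge to a full-dimensional set. For instance, closed circles of slope $p/q$ in $T^2$ with $p,q\to\infty$ converge to the whole torus. Example \ref{ex:bad-approximation} shows the same phenomenon, and it is exactly why Naive step 2 fails. What Theorem \ref{thm:main-good} bounds by $\limsup\dim K_i$ is an ideal $\mathfrak{h}'$ of the limit algebra $\mathfrak{k}$ with $\mathfrak{k}/\mathfrak{h}'$ nilpotent, not $\mathfrak{k}$ itself. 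Likewise, ``good approximations restrict to closed subgroups'' is not a standard fact. The restrictions are good approximations only onto whatever local object the $K_i$ converge to, and identifying that object is the real difficulty.

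Your route can be repaired, but only with ingredients you have not supplied. First show that the limit $K^*$ of $\phi_i(K_i\cap A_i)$ contains a neighborhood of $e$ in $G$. Then the restrictions $\phi_i|_{K_i}:K_i\to G$, with suitable regular neighborhoods from Proposition \ref{pro:zoom-i}, are good approximations. Theorem \ref{thm:main-good} then gives $\mathfrak{h}'\trianglelefteq\mathfrak{g}$ with $\mathfrak{g}/\mathfrak{h}'$ nilpotent and $\dim\mathfrak{h}'\le\dim G-1$, contradicting perfectness. Showing that $K^*$ is full needs two things.
\begin{enumerate}
\item[(i)] Elements of the identity components $L_i$ must approximate every point of $G$ near $e$. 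This is not automatic, since $A_i$ need not lie in $L_i$. It comes from the proof, not the statement, of Theorem \ref{thm:main-nss}: there the sub-local groups $Z_i\cap U_i\subset\exp(\mathfrak{g}_i)$ of Section \ref{sec:h-construction} converge to a sub-local group whose Lie algebra has dimension $\dim G$.
\item[(ii)] Near $e$, $K^*$ must be a sub-local group (by no compression) containing all commutators of nearby elements, hence with Lie algebra containing $[\mathfrak{g},\mathfrak{g}]=\mathfrak{g}$. Your remark that ``commutators converge'' is only the first step of this.
\end{enumerate}

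The paper avoids identifying any limit. Proposition \ref{pro:seven-samurai} supplies closed proper subgroups $H_{i,j}$ of the identity component with $A_i\subset H_{i,j}V$. This makes almost surjectivity of the restricted maps automatic (Lemma \ref{lem:restrictions-of-dense}). The local NSS version of Theorem \ref{thm:main-nss} (Remark \ref{rem:main-local}) then produces an ideal of $\mathfrak{h}=\mathfrak{g}$ of smaller dimension with nilpotent quotient, which is the contradiction.
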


\begin{Rem}\label{rem:small}
    In Corollary \ref{cor:no-nilpotent-2}, the groups $H_i$ being small means (as defined in \cite{pan-rong}) that the orbits $H_i \cdot  p_i $ are uniformly bounded and $(X_i, H_i, p_i ) \xrightarrow{eGH}(X,\{ e\} , p)$.  
\end{Rem}

\subsection{Strategy}\label{sec:outline}

Assuming nothing goes wrong, the proof of Theorem \ref{thm:main-good} is as follows:

\textbf{Naive step 1:} By the results from \cite{nsz},  we can assume that the sequence $G_i$ has the NSS property. After passing to a subsequence, we can further assume that $\dim (G_i)$ does not depend on $i$, and $\dim (G_i) \leq \dim (G)$. 

\textbf{Naive step 2:} Let $(G_i)_0 \trianglelefteq  G_i$ denote the identity  component.  Then after passing to a subsequence, the sets $\phi_i ((G_i )_0) \subset G $ converge to a closed normal subgroup $H \trianglelefteq G$ of dimension  $\dim (G_i)$ with Lie algebra $\mathfrak{h} \trianglelefteq \mathfrak{g}$. 

\textbf{Naive step 3:} The good approximations $\phi _i : G_i \to G$ descend to good approximations $\psi _i : G_i / (G_i)_0 \to G/ H$.   Since $G_i / (G_i)_0$ is discrete, by \cite{breuillard-green-tao} we conclude that the identity component of  $G/H$ (and consequently $\mathfrak{g} / \mathfrak{h}$) is nilpotent. 

While in spirit the proof of Theorem \ref{thm:main-good} follows the above outline, \textbf{Naive step 2} can fail due to $(G_i)_0$ being ``too large'', as in the following example.

\begin{Exa}\label{ex:bad-approximation}
    Let $G_i : = \mathbb{R}$ for each $i$, $G : = \mathbb{R}^2 $, and $\phi_i : G_i \to G$ be given by
    \[    \phi_i (x) : = \left( \, x - \left\lfloor   x / i ^2  +  1 / 2       \right\rfloor i^2  \,  ,   \left\lfloor   x / i ^2 + 1/ 2 \right\rfloor / \, i \,                        \right) .   \]
    Also let 
    \begin{align*}
       A_i &  : =  \bigcup_{j = - i } ^i  \,( \, i^2 j - 1  \, , \,  i^2 j + 1  \, )    , \\
       A & : = ( -1, 1 ) \times  ( -1,1 )    .    \end{align*}
    It is then not hard to check that the maps  $\phi_i$ are good approximations with regular neighborhoods $A_i \subset G_i$ and $A \subset G$.   Note that $(G_i)_0 = G_i$ and the sequence $\phi_i (G_i)$ saturates $G$ (or more precisely, the sequence $\phi_i (G_i)$ converges to $G$ in the pointed Hausdorff/Attouch--Wets topology), so step 2 of the above strategy fails.

     In order to avoid that, we need to restrict ourselves to identity neighborhoods of appropriate size. In other words, we  consider
    \[     H_i : =  ( -1 , 1 )  \subset G_i     .       \]    
    Then the sets $\phi _i (H_i) \subset G$ converge to the set 
    \[     H : =  ( -1, 1 ) \times \{ 0 \}  \subset G   .        \]
    Note that the quotients $G_i / \langle H_i \rangle $ are trivial, so we cannot obtain good approximations  $\psi _i : G_i / \langle  H_i \rangle \to G / \langle H \rangle \cong \mathbb{R} $. Therefore, we need to consider ``local quotients'' using the theory of local groups as in \cite{goldbring, breuillard-green-tao}.   Namely, if
    \begin{align*}
       W_i &  : =  \bigcup_{j = - \lfloor \frac{i}{10}\rfloor } ^{\lfloor \frac{i}{10} \rfloor }  \,\left( \, i^2 j - \frac{1}{10}  \, , \,  i^2 j + \frac{1}{10}   \, \right)    , \\
       W & : = \left( - \frac{1}{10} , \frac{1}{10} \right)  \times  \left( - \frac{1}{10} , \frac{1}{10} \right)    ,
       \end{align*}
    then one can form the quotients $W _i / H_i$ and $W / H$ in a meaningful way and construct the desired ``local'' good approximations $\psi _i : W_i / H_i \to W / H$. 
\end{Exa}

\begin{Rem}
    While the above example is purely algebraic, this phenomenon naturally occurs in the setting of equivariant Gromov--Hausdorff convergence.  For instance, consider a sequence of flat cylinders converging to $\mathbb{R}^2$ by making the circle factor larger and larger, equipped with a sequence of $\mathbb{R}$-actions that converge to the standard $\mathbb{R}^2$-action on itself (compare \cite[Example 1]{kapovitch-wilking}). 
\end{Rem}

Using the above example as a guide, the proof of Theorem \ref{thm:main-good} is as follows: 

\textbf{Correct step 1: }Same as in previous strategy.

\textbf{Correct step 2: }Find suitable identity neighborhoods $H_i \subset G_i$ that converge to a subset $H \subset G$ with a well defined Lie algebra $\mathfrak{h} \leq \mathfrak{g}$  of dimension $\dim (G_i)$.  

\textbf{Correct step 3: }Find small identity neighborhoods $W_i \subset G_i$ and $W \subset G$ in such a way that the quotients $W_i / H_i$ and $W / H$ are well defined and the good approximations $\phi _i$ descend to  local good approximations $\psi _i : W_i / H_i \to W / H$.  Since $W_i / H_i$ is discrete, by  \cite{breuillard-green-tao} we can conclude that $W / H$ (and consequently $\mathfrak{g} / \mathfrak{h}$) is nilpotent.

\subsection{Outline}  This paper is organized as follows. In Sections \ref{sec:prelims} and \ref{sec:lie}, we discuss the background material. In Section \ref{sec:local-good}, we prove that under suitable conditions,  good approximations descend to quotients, enabling \textbf{Correct step 3} above. In Section \ref{sec:to-nss}, we perform \textbf{Correct step 1}, reducing Theorem \ref{thm:main-good} to the case when the groups in the sequence have the NSS property. 

In Section \ref{sec:ops-convergence}, we study how, under the NSS condition, one-parameter subgroups of the groups $G_i$ converge to one-parameter subgroups of $G$. In Section \ref{sec:h-construction}, we construct the sets $H_i \subset G_i$ and $H \subset G$ mentioned in \textbf{Correct step 2}, and in Section \ref{sec:properties-of-h} we show that $H$ satisfies the desired properties, finishing the proof of Theorems \ref{thm:main-good} and \ref{thm:main}.

Lastly, in Section \ref{sec:proofs-of-corollaries} we prove Corollaries \ref{cor:no-nilpotent} and \ref{cor:no-nilpotent-2}.

\section{Preliminaries}\label{sec:prelims}

\subsection{Notation}

In a topological space $X$, if $A \subset X$, we denote by $\inte (A)$ the interior of $A$, and by $\overline{A}$ the closure of $A$. Throughout this paper, $\dim (\cdot )$ denotes topological dimension. 

 In a metric space $X$, we denote by $B_r^X(x)$ the open ball with center $x$ and radius $r$.  If the metric space is clear from the context, we write $B_r(x)$  instead.

For a set $A$ and $n \in \mathbb{N}$, we denote bt $A^{\times n} $ the $n$-fold cartesian product $A \times \cdots \times A$. If $A$ is a subset of a group $G$, we denote by $A^n$ the product set
\[      A^n  : = \{  a _1 \cdots a_n \in G \, \vert \, a_j \in A   \text{ for all }j \}.            \]

If $f: M \to N$ is a smooth map between smooth manifolds, its differential is denoted by $f_{\ast} : TM \to TN$ or $d_p f : T_pM \to T_{f(p)}N$ depending on whether we want to emphasize the basepoint. 

If $G$ is a Lie group with Lie algebra $\mathfrak{g}$, we denote by $\exp $ and $\log$ the algebraic exponential and logarithm maps between (subsets of) them, even if $G$ is equipped with a Riemannian metric. For $g \in G$, we denote by $L_g : G \to G$ and $R_g : G \to G$ the left and right multiplication maps, respectively. The adjoint representations are denoted by $\Ad : G \to \operatorname{GL} (\mathfrak{g})$ and $\ad : \mathfrak{g} \to \operatorname{End} (\mathfrak{g}) $.

\subsection{Linear algebra} Here we review the results from linear algebra that we will use. For a Banach space $V$ and a bounded operator  $ T: V \to V$,  we denote by $\Vert T \Vert $ the operator norm of $T$ and by $e^T : V \to V$ the map given by
\[      e^T := \sum_{k = 0}^{\infty} \frac{T^k}{k!}.                 \]

\begin{Pro}  \label{pro:rudin} 
Let $V $ be a Banach space and  $T : V \to V$ a bounded operator. If $ \Vert T - \Id _V \Vert  \leq \frac{1}{2} $, then $T$ is invertible and 
\[    \Vert T ^{-1}  - \Id_V  \Vert  \leq 2 \,   \Vert T - \Id_V \Vert .           \]    
\end{Pro}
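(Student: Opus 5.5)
Write $S := \Id_V - T$, so that $T = \Id_V - S$ and $\Vert S \Vert \leq \frac12 < 1$. The plan is to invert $T$ with the Neumann series and then read the estimate off the tail of that series.

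First, the series $\sum_{k\ge 0} S^k$ converges absolutely in the Banach algebra of bounded operators on $V$, because
\[ \sum_{k \geq 0} \Vert S^k \Vert \leq \sum_{k\geq 0} \Vert S \Vert^k \leq \sum_{k \geq 0} 2^{-k} = 2 . \]
Since $V$ is complete, so is this operator algebra, and the series has a bounded limit $R := \sum_{k\ge0} S^k$.

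Next, I check that $R$ is a two-sided inverse of $T$. Telescoping gives
\[ (\Id_V - S)\sum_{k=0}^{N} S^k = \Id_V - S^{N+1} = \Big(\sum_{k=0}^{N} S^k\Big)(\Id_V - S), \]
and $\Vert S^{N+1}\Vert \leq 2^{-(N+1)} \to 0$. Composition of bounded operators is continuous in operator norm, so letting $N\to\infty$ yields $TR = RT = \Id_V$. Hence $T$ is invertible with $T^{-1} = R$.

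Finally, for the estimate,
\[ T^{-1} - \Id_V = \sum_{k\ge1} S^k = S\sum_{k\ge0} S^k , \]
so
\[ \Vert T^{-1} - \Id_V\Vert \leq \Vert S\Vert \sum_{k \geq 0} \Vert S \Vert^k = \frac{\Vert S \Vert}{1 - \Vert S \Vert} \leq 2\Vert S\Vert = 2\, \Vert T - \Id_V \Vert , \]
where the last inequality uses $\Vert S\Vert\le \frac12$. The only step needing care is justifying the limit passage in the telescoping identity, which uses completeness of $V$ and continuity of composition; everything else is routine.
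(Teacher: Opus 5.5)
Your proof is correct and follows the paper's approach: the paper's one-line proof is exactly the Neumann expansion $T^{-1}-\Id_V=\sum_{k\geq 1}(-1)^k(T-\Id_V)^k=\sum_{k\geq 1}S^k$. You have simply written out the details it leaves implicit, namely convergence, the two-sided inverse, and the bound $\Vert S\Vert/(1-\Vert S\Vert)\leq 2\Vert S\Vert$ on the geometric tail.
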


\begin{proof}
    Use the expansion $T^{-1} -  \Id_V = -  (   T  -\Id_V ) + (  T -\Id_V )^2 - (T-\Id_V)^3 + \ldots $.
\end{proof}

\begin{Lem}  \label{lem:exponentials} 
Let $V $ be a Banach space and  $ T _1 , \ldots , T_k  : V \to V$  bounded operators.  Assume
\[ S : =  \sum_{i = 1 }^k \Vert T_i \Vert < 1 .      \] 
Then 
\begin{equation}\label{eq:exponential-bound}
  \Vert e^{T_1} e^{T_2} \cdots e^{T_k} - \Id _V \Vert \leq 2 S e ^S  .                      
\end{equation}
\end{Lem}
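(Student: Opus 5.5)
We reduce to a scalar estimate by expanding the product and bounding its terms via submultiplicativity of the operator norm. Write each factor as $e^{T_j} = \Id_V + R_j$ with
\[ R_j := \sum_{m \geq 1} \frac{T_j^m}{m!} . \]
Since $\Vert T_j^m \Vert \leq \Vert T_j \Vert^m$, we get
\[ \Vert R_j \Vert \leq e^{\Vert T_j \Vert} - 1 =: r_j . \]
Expanding the product gives
\[ e^{T_1}\cdots e^{T_k} - \Id_V = \sum_{\emptyset \neq J \subset \{1,\dots,k\}} \prod_{j \in J} R_j , \]
where each product is taken in increasing order of indices. Applying the triangle inequality and submultiplicativity,
\[ \Vert e^{T_1}\cdots e^{T_k} - \Id_V \Vert \leq \prod_{j=1}^k (1 + r_j) - 1 = \prod_{j=1}^k e^{\Vert T_j\Vert} - 1 = e^S - 1 . \]

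It remains to prove the scalar inequality $e^S - 1 \leq 2 S e^S$ for $0 \leq S < 1$. By the mean value theorem, $e^S - 1 = S e^{\xi}$ for some $\xi \in [0,S]$, so $e^S - 1 \leq S e^S \leq 2 S e^S$. In fact this gives the sharper bound $S e^S$, and the hypothesis $S<1$ is not needed for this step.

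The only step needing care is the expansion over subsets $J$, because the operators need not commute. The order of the factors in each product is irrelevant to the norm bound, since we only use $\Vert AB \Vert \leq \Vert A \Vert \Vert B \Vert$. Alternatively, one can argue by induction on $k$ using the identity
\[ P_k - \Id_V = (P_{k-1} - \Id_V) e^{T_k} + (e^{T_k} - \Id_V), \qquad P_k := e^{T_1}\cdots e^{T_k} . \]
This yields $a_k \leq a_{k-1} e^{\Vert T_k\Vert} + e^{\Vert T_k \Vert} - 1$ for $a_k := \Vert P_k - \Id_V\Vert$, which again gives $a_k \leq e^{S_k} - 1$, where $S_k := \sum_{j \leq k} \Vert T_j \Vert$.
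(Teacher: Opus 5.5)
Your proof is correct, and it proves the sharper bound $e^S - 1$. The paper argues via the telescoping identity
\[ e^{T_1}\cdots e^{T_k} - \Id_V = \sum_{i=1}^k e^{T_1}\cdots e^{T_{i-1}}\,\bigl(e^{T_i} - \Id_V\bigr), \]
and bounds the $i$-th term by $e^{\Vert T_1\Vert + \cdots + \Vert T_{i-1}\Vert}\,\bigl(e^{\Vert T_i\Vert} - 1\bigr)$. Your main argument instead expands the product over all nonempty index subsets.

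The two routes give the same estimate. The telescoped scalar bounds sum exactly to $e^S - 1$, which is the quantity your subset expansion produces. Your inductive alternative, once unrolled, is the paper's telescoping in mirror form, with the partial products $e^{T_{i+1}}\cdots e^{T_k}$ on the right instead of $e^{T_1}\cdots e^{T_{i-1}}$ on the left. The subset expansion makes the identity $\prod_j (1+r_j) - 1 = e^S - 1$ immediate but involves $2^k - 1$ terms, while the telescoping needs only $k$ terms.

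You are also right that the factor $2$ and the hypothesis $S<1$ are slack. In the paper they presumably come from a crude per-factor estimate such as $e^x - 1 \leq 2x$ on $[0,1)$, applied to each $\Vert e^{T_i} - \Id_V\Vert$. That is harmless for the later application, where $S$ is small.
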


\begin{proof}
This can be easily obtained from the telescopic expansion
\[  e^{T_1} e^{T_2} \cdots e^{T_k} - \Id _V = \sum_{i = 1 } ^k  [  e^{T_1} e^{T_2} \cdots e^{T_i} - e^{T_1} e^{T_2} \cdots e^{T_{i-1}} ].             \]
\end{proof}

The following result, known as John's Ellipsoid Theorem,  states that any norm can be estimated by one induced by an inner product. A proof can be found for example in \cite{milman-schechtman}. 

\begin{Thm}\label{thm:john}
   Let $V$ be an $n$-dimensional real vector space and $\Vert \cdot \Vert : V \to \mathbb{R}$ a norm. Then there is a norm $\Vert \cdot \Vert ' : V \to \mathbb{R}$ induced by an inner product and satisfying 
    \[      \Vert \vv \Vert  \leq \Vert \vv \Vert ' \leq C(n) \Vert \vv \Vert          \]
    for all $\vv \in V$. 
\end{Thm}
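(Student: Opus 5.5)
The statement is John's Ellipsoid Theorem for a norm $\Vert \cdot \Vert$ on an $n$-dimensional real vector space $V$; the plan is the classical maximal-volume ellipsoid argument, which even gives $C(n) = \sqrt{n}$. Let $K := \{ \vv \in V : \Vert \vv \Vert \leq 1 \}$. Then $K$ is compact, convex, symmetric, and has nonempty interior. Fix any linear identification $V \cong \mathbb{R}^n$ and consider ellipsoids $E = T(B)$, where $B$ is the Euclidean unit ball and $T$ ranges over invertible linear maps. A compactness argument gives an ellipsoid $E \subset K$ of maximal volume: the volume is proportional to $|\det T|$, the admissible $T$ are bounded since $E\subset K$, and the volumes are bounded below by any fixed ball inside $K$. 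Let $\Vert \cdot \Vert_E$ be the norm with unit ball $E$; it is induced by an inner product. From $E \subset K$ we get $\Vert \vv \Vert \leq \Vert \vv \Vert_E$. Set $\Vert \cdot \Vert' := \Vert \cdot \Vert_E$.

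It remains to show $K \subset \sqrt{n}\, E$, which gives $\Vert \vv \Vert_E \leq \sqrt{n}\,\Vert \vv \Vert$. Apply $T^{-1}$ to reduce to the case $E = B$. Suppose some $x \in K$ has $|x| = R > \sqrt{n}$. By symmetry and convexity, $K$ contains the convex hull $C$ of $B \cup \{\pm x\}$. Rotate so that $x = R e_1$. Consider the ellipsoids
\[
E_{a,b} := \Big\{ y : \frac{y_1^2}{a^2} + \frac{|y'|^2}{b^2} \leq 1 \Big\}, \qquad y = (y_1, y'),
\]
with $a > 1 > b$. The tangency computation for the double cone from $\pm Re_1$ to $B$ shows that $E_{a,b} \subset C$ whenever $a^2/R^2 + b^2(1 - 1/R^2) \leq 1$.

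Take $a = 1 + \varepsilon$ and choose $b$ so that this constraint holds with equality. Then $b^2 = 1 - \dfrac{2\varepsilon}{R^2 - 1} + O(\varepsilon^2)$, so
\[
\operatorname{vol}(E_{a,b}) \propto a\, b^{n-1} = 1 + \varepsilon\Big(1 - \frac{n-1}{R^2 - 1}\Big) + O(\varepsilon^2).
\]
This exceeds $1$ for small $\varepsilon$ exactly when $R^2 > n$. That contradicts the maximality of $B$, so $K \subset \sqrt{n}\, B$.

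The main obstacle is the containment $E_{a,b} \subset C$. I would check it in the two-dimensional section spanned by $e_1$ and a unit vector $u \perp e_1$, where the claim is elementary. There $C$ is bounded by the circle together with tangent lines from $(\pm R, 0)$, and $E_{a,b}$ lies inside the region cut out by those lines exactly when the support function condition $a^2 \cos^2\theta + b^2 \sin^2\theta \leq h_C(\theta)^2$ holds. At the tangent direction this condition reduces to the stated inequality.

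If this computation proves cumbersome, there is a simpler fallback since the constant is irrelevant. Choose a basis adapted to $K$ via Auerbach's lemma, that is, a basis $v_j$ with dual functionals of norm $1$ maximizing the determinant. Then $\Vert \cdot \Vert$ is sandwiched between the $\ell^1$ and $\ell^\infty$ norms in these coordinates up to a factor $n$, and comparing with the $\ell^2$ norm yields $C(n) = n$ after rescaling.
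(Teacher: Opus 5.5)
Your proof is correct. The paper gives no argument of its own here; it simply cites John's ellipsoid theorem from Milman--Schechtman. Your maximal-volume ellipsoid argument, with the perturbation $E_{a,b}$ giving $C(n)=\sqrt{n}$, is the standard proof of that result.

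The one step worth writing out is why checking the tangent direction suffices. In the two-dimensional section, $h_{E_{a,b}}^2 = a^2 c + b^2(1-c)$ is affine in $c=\cos^2\theta$, while $h_C^2=\max\{1,R^2 c\}$. So you only need the inequality at $c=1/R^2$, which is your stated condition, and at $c=1$, which is $a\le R$. The latter follows from the former because $R\geq 1$.

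Your Auerbach fallback with $C(n)=n$ is also correct and would suffice for the paper. The paper only needs the constant to depend on $\dim(G_i)$ alone, since it uses the theorem to replace the norms $\Vert\cdot\Vert_i$ by inner-product norms at the cost of enlarging $C_0$.
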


\subsection{Hausdorff convergence} In this section, we review the Hausdorff convergence in the space of subsets of a metrizable topological space.

\begin{Def}
Given a metric  space $(X,d)$ and two subsets $A,B \subset X$, the \emph{Hausdorff distance} between them (denoted $d_H(A,B)$) is defined to be
\[    \max\{  \sup \{  \inf \{ d(a,b) \, \vert \, b \in B \} \, \vert \, a \in A \} ,    \sup \{  \inf \{ d(a,b) \, \vert \, a \in A \} \, \vert \, b \in B \}      \} .         \]
We say that a sequence of subsets $A_i \subset X$ \emph{converges in the Hausdorff topology} to a subset $A \subset X$ if $d_H(A_i , A) \to 0$ as $i \to \infty$. 
\end{Def}

If $X$ is  a metrizable topological space, we say that a sequence $A_i \subset X$ \emph{converges in the Hausdorff topology} to $A \subset X$ if after equipping $X$ with a compatible metric $d : X \times X \to \mathbb{R}$, the sequence $A_i$ converges in the Hausdorff topology to $A$ in the above sense. In general, this definition depends on the metric $d$, but if there is a compact set $K \subset X$ with 
\begin{equation}\label{eq:compact-hausdorff-sequence}
\bigcup _{i \in \mathbb{N}} A_i \subset K ,    
\end{equation}
then this definition does not depend on $d$. Moreover, assuming \eqref{eq:compact-hausdorff-sequence}, the sequence $A_i $ converges to $A $ if and only if the two following conditions hold:
\begin{itemize}
    \item For any $a \in A$, there is a sequence $a_i \in A_i$ with $a_i \to a$.
    \item Given a sequence $a_i \in A_i$, any accumulation point belongs to $\overline{A}$. 
\end{itemize}

 \begin{Rem}
    Often in the literature, the  Hausdorff topology is restricted to the set of closed subsets of a given ambient space $X$. However, it will be convenient for us to consider non-closed subsets as well. As a consequence of this, Hausdorff limits of sequences may not be unique. Nevertheless, two distinct closed sets cannot be Hausdorff limits of the same sequence. 
\end{Rem}

The Hausdorff topology also enjoys a natural compactness property: if a sequence of non-empty subsets $A_i \subset X$ satisfies \eqref{eq:compact-hausdorff-sequence} for some compact subset $K \subset X$, then after passing to a subsequence, the sequence $A_i$ converges in the Hausdorff topology to a closed subset $A \subset K$.

\subsection{Properties of good approximations}

In this section we review the basic properties of good approximations.  We begin with a compactness property.

\begin{Lem}\label{lem:convergent-sequences}
    Let $\phi_i: G_i \to G$ be good approximations with $A_i \subset G_i$, $A\subset G$ as regular neighborhoods, and assume $G$ is metrizable. Then for any $n \in \mathbb{N}$ and any sequence $g_i \in A_i ^n$, after passing to a subsequence, there is $g \in \overline{A^n}$ with $\phi_i (g_i) \to g$. Moreover, if $g_i \in \partial A_i$ for all $i$, then $g \in \partial A$. 
\end{Lem}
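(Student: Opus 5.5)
Write $g_i = a_i^{(1)} \cdots a_i^{(n)}$ with $a_i^{(j)} \in A_i$. The plan is to move the product out of $G_i$ and into $G$ using the almost homomorphism property \ref{item:ga-3}, and then use compactness of $\overline{A}$ in $G$.

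\textbf{Convergence.} By no expansion \ref{item:ga-2}, applied to a fixed pre-compact open $V \supset \overline{A}$, each sequence $\phi_i(a_i^{(j)})$ eventually lies in the compact set $\overline{V}$. Since $G$ is metrizable, after passing to a subsequence we get $\phi_i(a_i^{(j)}) \to a^{(j)}$ for every $j$. Shrinking $V$ shows $a^{(j)} \in \overline{A}$.

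Now apply \ref{item:ga-3} inductively to the partial products $g_i^{(k)} = a_i^{(1)}\cdots a_i^{(k)} \in A_i^{k}$. We have $\phi_i(g_i^{(k)}) = \phi_i(g_i^{(k-1)} a_i^{(k)})$, and this equals $\phi_i(g_i^{(k-1)})\,\phi_i(a_i^{(k)})$ up to a right error factor $u_i \to e$. Continuity of multiplication in $G$ then gives
\[ \phi_i(g_i) \to g := a^{(1)} \cdots a^{(n)} \in \overline{A}^{\,n}. \]
Finally, $\overline{A}^{\,n} \subset \overline{A^n}$, since the product of closures lies in the closure of the product by continuity of multiplication.

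\textbf{Boundary statement.} Suppose $g_i \in \partial A_i$ and $\phi_i(g_i) \to g$. By the first part, $g \in \overline{A}$; this also holds directly, because $\partial A_i \subset \overline{A_i}$, and continuity-type arguments give $\overline{A_i} \subset A_i^2$ together with \ref{item:ga-2}. It remains to show $g \notin A$. Suppose instead $g \in A$, and choose a compact neighborhood $K \subset A$ of $g$. Then eventually $\phi_i(g_i) \in \inte(K)$.

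The key step is to perturb $g_i$ slightly. Since $g_i \in \partial A_i$ and $A_i$ is open, we have $g_i \notin A_i$. On the other hand, $g_i \in \overline{A_i}$, so there are points $g_i' \in A_i$ arbitrarily close to $g_i$. We can write $g_i = g_i' w_i$ with $w_i$ in any prescribed identity neighborhood $U_i$. Here we choose the $U_i$ from almost continuity \ref{item:ga-5}, so that $\phi_i(U_i) \subset U$ for a small $U$, and we may assume $U_i \subset A_i$. Then $g_i \in A_i^{2}$, and by \ref{item:ga-3}
\[ \phi_i(g_i) \approx \phi_i(g_i')\,\phi_i(w_i). \]
This gives no contradiction yet.

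The more direct route is no compression \ref{item:ga-4}. We have $g_i \in \overline{A_i} \subset A_i^2$, where $\overline{A_i} \subset A_i^2$ holds because $x \in \overline{A_i}$ implies that $xA_i$ meets $A_i$ (as $A_i$ is a symmetric identity neighborhood). Also $\phi_i(g_i) \in K$ for $i$ large. So \ref{item:ga-4} with $n=2$ forces $g_i \in A_i$ eventually, contradicting $g_i \in \partial A_i$ with $A_i$ open. Hence $g \in \overline{A} \setminus A = \partial A$, since $A$ is open.

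\textbf{Main obstacle.} The main subtlety is the uniformity in \ref{item:ga-4}. It is stated for a fixed compact $K$, which is why we fix $K$ as a compact neighborhood of the limit $g$ before choosing $i$ large. The remaining step, the inclusion $\overline{A_i} \subset A_i^2$, is routine.
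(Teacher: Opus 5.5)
There is a genuine gap in the boundary part. Your convergence argument for the first part is correct and self-contained; the paper just cites \cite{nsz} for it. Your exclusion of $g \in A$ is also correct: you use no compression (IV) with $n=2$ and justify $\overline{A_i} \subset A_i^2$ properly, which is exactly the paper's Case~I.

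The gap is the other half of the boundary claim, the assertion $g \in \overline{A}$. It does not follow ``by the first part''. Since $A_i$ is open, $g_i \in \partial A_i$ forces $g_i \notin A_i$, so the first part can only be applied with $n = 2$, and it yields $g \in \overline{A}^{\,2}$, not $g \in \overline{A}$. Nor do $\overline{A_i} \subset A_i^2$ and (II) together give it, because (II) only controls $\phi_i$ on $A_i$ itself, and $g_i \notin A_i$. Ruling out $g \notin \overline{A}$ is precisely where almost continuity (V) is needed. This is the paper's Case~II, and your proposal never supplies it.

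The perturbation you wrote down and then abandoned (``This gives no contradiction yet'') is exactly the missing argument, aimed at the wrong half. The steps are:
\begin{enumerate}
\item Fix an identity neighborhood $U \subset G$ with compact closure, and take identity neighborhoods $U_i \subset A_i$ from (V) with $\phi_i(U_i) \subset U$.
\item Using $g_i \in \overline{A_i}$, write $g_i = g_i' w_i$ with $g_i' \in A_i$ and $w_i \in U_i$.
\item By (III), $\phi_i(g_i)$ and $\phi_i(g_i')\phi_i(w_i)$ differ by a factor tending to $e$.
\item After passing to a subsequence, (II) gives $\phi_i(g_i') \to a \in \overline{A}$, while $\phi_i(w_i) \to u \in \overline{U}$. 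Hence $g = au \in \overline{A}\,\overline{U}$.
\item Since $U$ is arbitrary and $\overline{A}$ is compact, $g \in \overline{A}$.
\end{enumerate}
The paper phrases this as a contradiction. If $g \notin \overline{A}$, pick $U$ with $g\overline{U} \cap \overline{A} = \emptyset$ and choose $h_i \in A_i$ with $g_i^{-1}h_i \in U_i$. Then $\phi_i(h_i) \to gu \notin \overline{A}$, contradicting (II). Together with your Case~I argument, this gives $g \in \overline{A} \setminus A = \partial A$.
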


\begin{proof}
    The first part is proven in \cite[Lemma 4.12]{nsz}. To verify the last claim, assume by contradiction that $g_i \in \partial A_i$ for all $i$, but $g \notin \partial A$. 
\begin{center}
    \textbf{Case I: }$g \in A$. 
\end{center}
    Pick a compact neighborhood $K \subset A$ of $g$. Then $\phi_i (g_i) \in K$ for $i$ large enough so by \eqref{item:ga-4} we get $g_i \in A_i$; a contradiction.
\begin{center}
    \textbf{Case II: }$g \notin \overline{A}$. 
\end{center}
    Pick an identity neighborhood $U \subset G$ with $g \overline{U} \cap \overline{A} = \emptyset$.   By  \eqref{item:ga-5}, there is a sequence of identity neighborhoods $U_i \subset A_i$ with $\phi_i (U_i) \subset U$ for all $i$ large enough.  Then choose $h_i \in A_i$ with $g_i ^{-1 } h_i   \in U_i$ for all $i$. After passing to a subsequence we can assume $\phi_i (g_i ^{-1 } h_i  ) \to u $ for some $u  \in \overline{U}$. Using \eqref{item:ga-3}, we get
    \[      \phi _i (h_i ) = \phi_i ( (g_i) (g_i ^{-1} h_i ) ) \to   g u .          \]
    Since $gu \notin \overline{A}$, this contradicts \eqref{item:ga-2}.
\end{proof}

The following lemma allows one to approximate sets in the limit group by sets in the sequence.

\begin{Lem}\cite{nsz}\label{lem:approximation-by-internal-sets}
    Let $\phi_i : G_i \to G$ be a sequence of good approximations with regular neighborhoods $A_i \subset G_i $ and $A \subset G$.  For all $K \subset U \subset A$ with $K$ compact and $U$ open, there are open sets $A_i' \subset A_i  $ with 
        \[ \phi_i^{-1}(K) \cap A_i \subset A_i' \subset \phi_i^{-1}(U)                 \]
        for $i$ large enough.  Moreover, if $U$ is symmetric, the sets $A_i'$ can be taken to be symmetric.
\end{Lem}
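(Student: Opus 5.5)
We will construct the sets $A_i'$ as preimages under $\phi_i$ of a slightly enlarged neighborhood of $K$, thickened by small identity neighborhoods of $G_i$ so that they are open. The tools are almost continuity \eqref{item:ga-5}, the almost homomorphism property \eqref{item:ga-3} and no compression \eqref{item:ga-4}.

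First we fix the buffers in the limit group. Since $K$ is compact and $K \subset U$ is open, we choose an open set $U'$ with compact closure and an identity neighborhood $V \subset G$ such that
\[ K \subset U', \qquad \overline{U'}\, \overline{V}\, \overline{V} \subset U . \]
We may also arrange $\overline{U'} \subset A$. By \eqref{item:ga-5}, there are identity neighborhoods $U_i \subset G_i$ with $\phi_i(U_i) \subset V$ for $i$ large. Shrinking them, we may assume $U_i \subset A_i$ and that the $U_i$ are symmetric. We then define
\[ A_i' := \big( \phi_i^{-1}(U') \cap A_i \big)\, U_i \;\cap\; A_i . \]
This set is open, since it is a union of translates of the open set $U_i$ intersected with the open set $A_i$. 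It contains $\phi_i^{-1}(K)\cap A_i$ because $e \in U_i$.

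The main step is the inclusion $A_i' \subset \phi_i^{-1}(U)$. Take $x = g u$ with $g \in A_i$, $\phi_i(g) \in U'$ and $u \in U_i$. By \eqref{item:ga-3} with $n=2$, applied to $g,u \in A_i$, we get $\phi_i(gu) \in \phi_i(g)\phi_i(u) W$ for any prescribed identity neighborhood $W$, provided $i$ is large. Choosing $W = V$ gives $\phi_i(gu) \in U' V V \subset U$, uniformly over all such $g,u$. This uniformity is exactly what \eqref{item:ga-3} provides, since $i_0$ depends only on $n$ and $W$. The only real obstacle is this uniform control. Note that we intersect with $A_i$, so we do not need \eqref{item:ga-4} to guarantee $A_i' \subset A_i$. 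If one instead wanted the thickened set itself to lie in $A_i$, one would use \eqref{item:ga-4}: $\phi_i(gu)$ lies in the compact set $\overline{U'}\,\overline{V}\,\overline{V} \subset A$, provided we also arranged this set to be inside $A$.

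For the symmetric case, assume $U$ is symmetric. We replace the set above by $A_i' \cap (A_i')^{-1}$, which is still open. Since $A_i$ is symmetric, it suffices to check two things.
\begin{itemize}
\item $\phi_i^{-1}(K)\cap A_i \subset (A_i')^{-1}$, that is, $\phi_i(g)\in K$ implies $\phi_i(g^{-1}) \in U'$. This fails for arbitrary $K$, so we first replace $K$ by the compact symmetric set $K \cup K^{-1}$. This set still lies in $U$ because $U$ is symmetric, so we may choose $U'$ symmetric.
\item $\phi_i(g^{-1})$ is close to $\phi_i(g)^{-1}$ uniformly for $g\in A_i$. This follows from \eqref{item:ga-3} applied to $g, g^{-1}$ together with $\phi_i(e)\to e$, which is obtained from \eqref{item:ga-3} with $g=h=e$. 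Absorbing this error into one more factor of $V$ in the choice of $U'$ and $V$ gives the containment.
\end{itemize}
Finally, including $e$ is automatic, so $A_i'$ is symmetric and satisfies the required inclusions for $i$ large.
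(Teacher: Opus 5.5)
Your proof is correct. The paper gives no argument for this lemma and simply quotes it from \cite{nsz}, so there is no internal proof to compare with. Your construction is the natural self-contained proof: thicken $\phi_i^{-1}(U')\cap A_i$ on the right by the identity neighborhoods $U_i$ from \eqref{item:ga-5}, and control the error uniformly with \eqref{item:ga-3}. You are also right that \eqref{item:ga-4} is not needed once you intersect with $A_i$.

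A few details should be tightened.

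\begin{itemize}
\item Property \eqref{item:ga-3} gives $\phi_i(gu)\in\phi_i(g)\phi_i(u)W^{-1}$, not $\phi_i(g)\phi_i(u)W$. So take $V$ symmetric. Also, $n=1$ already suffices.
\item In the symmetric case, what you actually need is this: if $g\in A_i$ and $\phi_i(g)\in K$, then $\phi_i(g^{-1})\in\phi_i(g)^{-1}\phi_i(e)W\subset K^{-1}W^2$. So it suffices that $K^{-1}W^2\subset U'$. The index $i_0$ in \eqref{item:ga-3} depends on $W$, so $W$ can be chosen after $U'$ is fixed, and the inclusion follows from compactness of $K^{-1}\subset U'$. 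Neither an extra factor of $V$ nor inverse-closedness of $U'$ is required.
\item Your remark that containing $e$ is automatic relies on $e\in U'$, together with $\phi_i(e)\to e$. Without $e\in U'$ it can fail. For instance, if $U'\cap V=\emptyset$, then $e\in A_i'$ would require some $u\in U_i$ with $\phi_i(u^{-1})\in U'$, which is impossible since $\phi_i(U_i)\subset V$ and $U_i$ is symmetric. Arrange $e\in U'$ by replacing $K$ with $K\cup K^{-1}\cup\{e\}$. This is legitimate because a symmetric $U$ contains $e$ in the paper's convention. Then $\phi_i(e)\to e$ gives $e=e\cdot e\in A_i'$ for $i$ large.
\end{itemize}
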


The following result allows one to ``localize'' a good approximation.

\begin{Pro}\label{pro:zoom-i}
 \cite{nsz}.  Let $\phi_i: G_i \to G$ be good approximations with $A_i \subset G_i$, $A\subset G$ as regular neighborhoods, and assume $G$ is metrizable. For any open symmetric set $B \subset A$, there are open symmetric sets $B_i \subset A_i$ such that the maps $\phi_i $ are good approximations with regular neighborhoods $B_i$ and $B $. Moreover, a sequence of groups $H_i \leq G_i$ is small with respect to the pairs $(\phi _i ,  A_i)$  if and only if it is small with respect to the pairs $(\phi_i, B_i)$. 
\end{Pro}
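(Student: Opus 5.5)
Proposition \ref{pro:zoom-i} is cited from \cite{nsz}, but here is how I would prove it directly. Given an open symmetric $B \subset A$, define
\[ B_i := \phi_i^{-1}(B) \cap A_i , \]
or rather an open symmetric modification of it produced by Lemma \ref{lem:approximation-by-internal-sets}. The raw preimage need not be open, since $\phi_i$ is not continuous. So exhaust $B$ by compact sets $K_1 \subset K_2 \subset \cdots$ with $K_m \subset \inte(K_{m+1})$ and $\bigcup_m K_m = B$. Lemma \ref{lem:approximation-by-internal-sets} gives symmetric open sets $A_i^{(m)}$ satisfying
\[ \phi_i^{-1}(K_m)\cap A_i \subset A_i^{(m)} \subset \phi_i^{-1}(B) \quad \text{for } i \geq i_m . \]
A diagonal choice $B_i := A_i^{(m(i))}$, with $m(i)\to\infty$ slowly, then gives open symmetric sets with $\phi_i^{-1}(K)\cap A_i \subset B_i \subset \phi_i^{-1}(B)\cap A_i$ for every compact $K\subset B$ and all large $i$.

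Next I check properties (I)--(V) in order.
\begin{itemize}
\item \textbf{(I)} Take $U\subset B$ open. Shrink it to $U'$ with $\overline{U'}$ compact in $B$. Almost surjectivity for $A_i$ gives $a_i\in A_i$ with $\phi_i(a_i)\in U'$, and the sandwich above forces $a_i \in B_i$.
\item \textbf{(II)} This holds because $\phi_i(B_i)\subset B\subset \overline B$.
\item \textbf{(III)} Since $B_i^n\subset A_i^n$, the almost homomorphism property is inherited directly.
\item \textbf{(V)} Take $U_i\subset B_i$. This is possible because, by the sandwich, $B_i$ contains $\phi_i^{-1}(K)\cap A_i$ for a compact identity neighborhood $K \subset B$. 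The neighborhoods $U_i$ given by (V) for $A_i$, intersected with that set, map into $K$. To get genuine open identity neighborhoods, replace them by $U_i\cap B_i$; these are open and contain $e$, since $\phi_i(e)\to e$ by (III).
\end{itemize}

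\textbf{(IV)} is the main point. Let $K\subset B$ be compact and $g\in \phi_i^{-1}(K)\cap B_i^n$. Then $g\in A_i^n$, and $K\subset A$ is compact, so (IV) for $A_i$ gives $g\in A_i$ for large $i$. Hence $g\in\phi_i^{-1}(K)\cap A_i\subset B_i$ by the sandwich. This needs uniformity in $g$ over all large $i$, which the statement of (IV) already provides. The diagonal sandwich must hold for this fixed $K$ once $i$ is large; that is true because every compact $K\subset B$ lies in some $K_m$. The expected obstacle is arranging the diagonal sequence $m(i)$ so that all these "for $i$ large enough" conditions hold simultaneously. I would handle it by choosing $i_m$ increasing and setting $m(i)=\max\{m: i\ge i_m\}$.

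For the statement about small subgroups, the condition $\phi_i(h_i)\to e$ is independent of the neighborhoods, so only the containment condition matters. If $H_i\subset B_i$, then $H_i\subset A_i$. Conversely, suppose $H_i\subset A_i$ and $\phi_i(H_i)\to e$ uniformly. Uniformity follows by a contradiction argument: choose bad elements $h_i$, and their images would converge to a point other than $e$ along a subsequence, by Lemma \ref{lem:convergent-sequences}. So eventually $\phi_i(H_i)\subset K$, where $K\subset B$ is a compact identity neighborhood. The sandwich then gives $H_i\subset \phi_i^{-1}(K)\cap A_i\subset B_i$.
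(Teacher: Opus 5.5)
Your argument is correct: the sandwich $\phi_i^{-1}(K)\cap A_i\subset B_i\subset \phi_i^{-1}(B)\cap A_i$ (for every compact $K\subset B$ and $i$ large) comes from Lemma \ref{lem:approximation-by-internal-sets} with a diagonal choice over a compact exhaustion of $B$, and it gives (I)--(IV) and the small-subgroup equivalence as you describe; (V) does not involve the regular neighborhoods at all, so nothing needs checking there. The paper does not reprove this proposition (it is quoted from \cite{nsz}), but your construction is the same ``internal sets plus diagonal argument'' device the paper points to for the closely related Lemma \ref{lem:approximate}, so it is essentially the intended route.
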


The following theorem establishes that when the limit group $G$ is Lie, the sequence $G_i$ admits a sequence $H_i \leq G_i$ of maximal small subgroups.

\begin{Thm}\label{thm:largest-small} \cite{nsz}.
    Let  $\phi _i : G_i \to G$ be a sequence of good approximations with regular neighborhoods $A_i \subset G_i$ and $A \subset G$. If $G$ is a Lie group, then there is a sequence of subgroups $H_i  \leq G_i $ such that:
    \begin{itemize}
        \item The sequence $H_i$ is small.
        \item $H_i \trianglelefteq \langle A_i \rangle $ for $i$ large enough.
        \item For any sequence $H_i ' \leq G_i$ of small subgroups, one has $H_i ' \leq H_i$ for $i$ large enough. 
        \item $G_i ' : = \langle A_i \rangle / H_i$ is a Lie group for $i$ large enough.
    \end{itemize}
\end{Thm}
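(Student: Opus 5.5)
We would prove Theorem \ref{thm:largest-small} by a Gleason--Yamabe-type escape argument, carried out uniformly along the sequence. Since $G$ is Lie, we first use Proposition \ref{pro:zoom-i} to shrink $A$ to a small symmetric neighborhood $B$ with no nontrivial subgroups in $\overline{B}$, and with $\exp$ a diffeomorphism onto a neighborhood of $\overline{B^{N}}$ for some large fixed $N$. We then replace $A_i$ by the corresponding $B_i$; smallness is unaffected by Proposition \ref{pro:zoom-i}. We define
\[ H_i := \{ g \in G_i \,\vert\, g^k \in A_i \text{ for all } k\in\mathbb{Z} \}, \]
or, more robustly, the subgroup generated by all subgroups of $G_i$ contained in a slightly smaller regular neighborhood $A_i'$ obtained from Lemma \ref{lem:approximation-by-internal-sets}.

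The first step is to show that every subgroup $K\le G_i$ contained in $A_i$ satisfies $\phi_i(K)\to e$ uniformly. Suppose not. Then there are $h_i\in K_i$ with $\phi_i(h_i)\to g\neq e$ by Lemma \ref{lem:convergent-sequences}. Using \eqref{item:ga-3}, the sets $\phi_i(K_i)$ then converge in the Hausdorff sense to a subset of $\overline{A}$ closed under products and inverses, that is, a nontrivial subgroup of $G$ inside $\overline{B}$. This contradicts the choice of $B$. The point needing care is that the products taken there lie in $A_i^2$, which is covered by \eqref{item:ga-3} with $n=2$. Properties \eqref{item:ga-4} and \eqref{item:ga-2} keep everything inside $A_i$. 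Consequently any small sequence $H_i'$ eventually lies in $A_i'$ and hence in $H_i$, which gives maximality.

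The second step, and the main obstacle, is to show that the group generated by all such subgroups is itself contained in $A_i$ and still small. We would argue by a commutator-escape estimate in the spirit of Gleason's lemma. Take the set $S_i$ of elements all of whose powers stay in $A_i'$. Elements $g,h\in S_i$ have $\phi_i$-images that are almost a subgroup near $e$, so by the first step their images tend to $e$. Then \eqref{item:ga-3} shows that $\phi_i(gh)$ also tends to $e$, and \eqref{item:ga-4} shows that $gh$ and its powers stay in $A_i'$. Hence $S_i$ is closed under products for $i$ large, so $H_i:=S_i$ is a group, and the first step shows it is small. This is the delicate part. The difficulty is that ``all powers'' involves unboundedly many products, while \eqref{item:ga-3} only controls products of bounded length.

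To handle this, we would first prove an escape lemma. If $g\in A_i$ and $\phi_i(g^k)$ stays in a small neighborhood $U$ for $k\le m$, then for $m$ large compared with $U$ the powers actually remain in $\phi_i^{-1}(U)$ forever. We would prove this by contradiction in the limit: a limiting ``one-parameter'' escape in $G$ is impossible inside $\overline{B}$.

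The remaining steps are routine.

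\textbf{Normality.} For $a\in A_i$, the conjugate $aH_ia^{-1}$ is small, since $\phi_i(ahah^{-1}\ldots)$ is controlled by \eqref{item:ga-3} and \eqref{item:ga-5}. By maximality, $aH_ia^{-1}\le H_i$, so $H_i\trianglelefteq\langle A_i\rangle$.

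\textbf{Lie quotient.} The group $H_i$ is closed, as one checks by taking limits inside $A_i'$. The quotient $\langle A_i\rangle/H_i$ is locally compact and, by maximality, has no small subgroups near the identity: any subgroup of the quotient inside the image of $A_i'$ lifts to a subgroup of $G_i$ contained in $A_i'$, and that lift is contained in $H_i$. By the Gleason--Yamabe theorem, the quotient is therefore a Lie group.
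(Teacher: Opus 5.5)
\textbf{Gap in Step 2.} The step that fails is your Step 2, and it carries essentially all of the content of the statement. You must show that the union $S_i$ of all subgroups of $G_i$ contained in $A_i'$ is closed under multiplication; equivalently, that the subgroup these generate stays inside $A_i'$. Properties \eqref{item:ga-3} and \eqref{item:ga-4} only control products with a bounded number of factors. For $g,h\in S_i$ they give $(gh)^k\in A_i'$ for each fixed $k$ once $i\geq i_0(k)$, but never for all $k$ at once.

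The escape lemma you propose to bridge this is false. Take $G_i=G=\mathbb{R}$, $\phi_i=\Id$, $A_i=A=(-1,1)$ and $U=(-\eta,\eta)$. For every $m$, the element $g=\eta/(m+1)$ satisfies $g^k\in U$ for $k\leq m$ but $g^{m+1}\notin U$. Your limiting argument only shows that a nontrivial limit cannot \emph{stay} in $\overline{B}$ forever. It gives no obstruction to an element with tiny image escaping after many steps, and that is exactly what you need to rule out for $gh$. Without closure of $S_i$ under products, you do not know that $H_i$ is a group inside $A_i'$, so smallness fails. You also lack the per-$i$ maximality (every subgroup of $G_i$ inside $A_i'$ lies in $H_i$) that your NSS argument for $\langle A_i\rangle/H_i$ relies on.

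\textbf{What closes it.} The missing ingredient is a uniform Gleason lemma, which is an unbounded-length estimate and does not come from compactness plus (I)--(V). It is the escape-norm inequality $\Vert g_1\cdots g_m\Vert_{B_i}\leq C_0\sum_j\Vert g_j\Vert_{B_i}$ of Theorem \ref{thm:gleason}, due to \cite{breuillard-green-tao}, whose proof uses Haar measure on $G_i$ and the Lie structure of $G$. Since $\Vert g\Vert_{B_i}=0$ exactly when $\langle g\rangle\subset B_i$, this inequality shows at once that $H_i:=\{g\in G_i \,\vert\, \Vert g\Vert_{B_i}=0\}$ is a subgroup, namely the largest subgroup of $G_i$ contained in $B_i$.

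With this $H_i$, the rest of your outline is sound:
\begin{itemize}
\item Your Step 1 gives smallness.
\item Maximality among small sequences follows from Proposition \ref{pro:zoom-i}, since a small sequence eventually lies in $B_i$.
\item For normality, the conjugates $aH_ia^{-1}$ with $a\in A_i$ are, for $i$ large, subgroups inside $B_i$ (argue by contradiction along sequences using \eqref{item:ga-3} and \eqref{item:ga-4}), hence lie in $H_i$.
\item A similar argument using \eqref{item:ga-5} shows $\overline{H_i}\subset B_i$, so $H_i$ is closed and hence compact.
\item For the NSS property of $\langle A_i\rangle/H_i$, use an identity neighborhood $V$ with $VH_i\subset B_i$, which exists by compactness of $H_i$. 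The image of $A_i'$ itself does not work, because preimages of subgroups only land in $A_i'H_i$.
\end{itemize}

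Note that the paper does not prove this statement; it quotes it from \cite{nsz}.
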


The following result establishes that in the NSS setting, the dimension of the groups is upper semi-continuous.

\begin{Thm}\label{thm:nsz} \cite{nsz}.
    Let $G_i$ be a sequence of Lie groups and  $\phi _i : G_i \to G$ a sequence of good approximations. If $G$ is a Lie group and  the sequence $G_i$ has the NSS property, then
    \[    \text{dim} (G) \geq \limsup_{i \to \infty} \text{dim} (G_i) .    \] 
\end{Thm}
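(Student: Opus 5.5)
We would prove this by producing, inside $G$, approximate copies of $d$-dimensional balls with $d=\limsup_i \dim(G_i)$. The plan has four steps.

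\textbf{Step 1 (reductions).} Pass to a subsequence with $\dim(G_i)=d$ for all $i$. Using Proposition \ref{pro:zoom-i}, shrink the regular neighborhoods to $A$ and $A_i$ with $A$ a small ball around $e$ in a fixed left-invariant Riemannian metric on $G$. The NSS property is preserved under this shrinking.

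\textbf{Step 2 (a uniform escape norm).} Fix a small symmetric identity neighborhood $U\subset A$. By Lemma \ref{lem:approximation-by-internal-sets}, choose symmetric open sets $U_i$ with $\phi_i^{-1}(K)\cap A_i\subset U_i\subset \phi_i^{-1}(U)$. For $g\in G_i$, define the escape norm
\[ \|g\|_i:=\bigl(\min\{m\in\mathbb{N} : g^m\notin U_i\}\bigr)^{-1}. \]
By NSS, the only subgroup contained in $U_i$ is trivial for $i$ large. The key claim is a uniform Gleason--Yamabe type statement: if $\|g_i\|_i\to 0$, then $\phi_i(g_i)\to e$, and conversely every $h_i\in U_i$ with $\phi_i(h_i)\to e$ has $\|h_i\|_i\to 0$.

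One direction follows from the almost homomorphism property \eqref{item:ga-3} and almost continuity \eqref{item:ga-5}. For the other, suppose $\phi_i(h_i)\to e$ while $\|h_i\|_i\ge c>0$. Then the powers $h_i^m$ escape $U_i$ after a bounded number of steps, each step being $\phi_i$-small. Taking the largest subgroup generated by such elements contradicts NSS; this is the same mechanism used to build the maximal small subgroups of Theorem \ref{thm:largest-small}.

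\textbf{Step 3 (one-parameter subgroups converge).} For $X\in\mathfrak{g}_i$, let $\tau_i(X)$ be the first time with $\exp_i(\tau_i(X)X)\notin U_i$. Rescale so that $\tau_i=1$. The curves $t\mapsto \phi_i(\exp_i(tX))$ on $[-1,1]$ are then almost homomorphisms. By Step 2 they are uniformly equicontinuous, so by an Arzel\`a--Ascoli argument together with Lemma \ref{lem:convergent-sequences} they subconverge to one-parameter subgroups of $G$ that reach $\partial U$, hence are nontrivial.

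\textbf{Step 4 (dimension count).} Equip $\mathfrak{g}_i$ with the norm $|X|_i:=1/\tau_i(X)$, after checking via Step 2 and Lemma \ref{lem:exponentials} that this is comparable to a norm. By Theorem \ref{thm:john}, replace it by a Euclidean norm, and identify $(\mathfrak{g}_i,|\cdot|_i)\cong\mathbb{R}^d$ isometrically up to a factor $C(d)$. Consider the maps
\[ F_i:=\phi_i\circ\exp_i\colon \overline{B_{1/C}(0)}\subset\mathbb{R}^d\to G. \]
These are uniformly almost continuous: they are equicontinuous up to an error tending to $0$. They are also uniformly almost injective. Indeed, if $F_i(X)$ and $F_i(Y)$ are close, then by \eqref{item:ga-3} the element $\exp_i(X)^{-1}\exp_i(Y)$ is $\phi_i$-small, hence escape-small by Step 2. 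Hence $|X-Y|_i$ is small, using a BCH estimate in the normalized coordinates. A subsequential limit $F$ of the $F_i$ therefore exists, is continuous and injective on a $d$-ball, and is thus a topological embedding. Consequently $\dim(G)\ge d$.

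\textbf{Main obstacle.} The crux is the uniform equivalence in Step 2 between being $\phi_i$-small and being escape-small. Without it, neither the equicontinuity in Step 3 nor the almost injectivity in Step 4 follows. We would attack it by contradiction: extract from a hypothetical bad sequence a sequence of small subgroups, namely the closures of the groups generated by the offending elements, using Lemma \ref{lem:convergent-sequences} and the no compression property \eqref{item:ga-4}. Such a sequence must be eventually trivial by the NSS property, which gives the contradiction.
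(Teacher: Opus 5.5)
There is a genuine gap, and it is not where you put it. The whole argument rests on one sentence in Step 4: ``after checking via Step 2 and Lemma \ref{lem:exponentials} that this is comparable to a norm''. It also rests on the unstated bound on Lie brackets that your ``BCH estimate in the normalized coordinates'' needs. You need a constant $C$, independent of $i$, such that $|\cdot|_i$ is $C$-comparable to a genuine norm on $\mathfrak{g}_i$ and $|[X,Y]|_i\le C|X|_i|Y|_i$. Without this you cannot control $\exp_i(X)^{-1}\exp_i(Y)$ by $|X-Y|_i$, and Theorem \ref{thm:john} cannot be applied with uniform distortion. Then neither the equicontinuity nor the almost injectivity of $F_i$ follows.

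Step 2 cannot supply this. It is a qualitative statement about sequences of single elements and contains no additivity at all. Lemma \ref{lem:exponentials} only bounds products of operator exponentials. To use it, say on $\Ad$, you would already need $\Vert\ad_X\Vert\le C|X|_i$, which is the bracket bound itself.

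These uniform estimates are exactly the Gleason lemmas of Breuillard--Green--Tao, Theorem \ref{thm:gleason}: quasi-subadditivity of $\Vert\cdot\Vert_{B_i}$ over arbitrarily many factors, and the commutator bound. This is where the NSS hypothesis becomes quantitative, and it yields Lemma \ref{lem:norm-approximates} and \eqref{eq:enabler}. Their proof is hard and uses Haar measure. The paper does not reprove Theorem \ref{thm:nsz}; it cites \cite{nsz}. When it adapts the result to local groups in Theorem \ref{thm:nsz-local}, it singles out Theorem \ref{thm:gleason} as the one ingredient of that proof needing separate care. NSS does give, for each large $i$, that $|\cdot|_i$ is nondegenerate, since a one-parameter subgroup inside $U_i$ would form a small sequence. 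But uniformity of the constants in $i$ is precisely the content of the Gleason lemmas, and no soft argument replaces it.

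The ``main obstacle'' you isolate is elementary and uses no NSS at all. Suppose $\phi_i(h_i)\to e$ and $m_i\le 1/c$ is the first exponent with $h_i^{m_i}\notin U_i$. Then $h_i^{m_i-1}\in U_i\subset A_i$, and boundedly many applications of \eqref{item:ga-3} give $\phi_i(h_i^{m_i})\to e$. Now \eqref{item:ga-4}, together with $\phi_i^{-1}(K)\cap A_i\subset U_i$, puts $h_i^{m_i}$ back in $U_i$, a contradiction. Your proposed mechanism could not work anyway: the groups generated by such $h_i$ leave $U_i$, so they are not small. The other direction only uses that $\overline{U}$ contains no nontrivial subgroup of $G$.

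So, as written, NSS never enters essentially. For example, take $\mathbb{R}/\tfrac1i\mathbb{Z}\times\mathbb{R}\to\mathbb{R}$, $(a,b)\mapsto b$, with $A_i=\mathbb{R}/\tfrac1i\mathbb{Z}\times(-1,1)$ and $A=(-1,1)$. These are good approximations without NSS, and the equivalence in Step 2 holds verbatim. Yet $\dim G=1<2$, and what fails is exactly that $|\cdot|_i$ is not comparable to a norm: it vanishes on the $a$-direction.

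If you import Theorem \ref{thm:gleason}, Lemma \ref{lem:norm-approximates} and \eqref{eq:enabler}, your Steps 3--4 do go through:
\begin{itemize}
\item The argument of Lemma \ref{lem:bch-quant} gives $\Vert\log(\exp(-X)\exp(Y))\Vert_i\lesssim\Vert X-Y\Vert_i$ on a ball of radius comparable to $1/C$.
\item Equicontinuity then follows from Lemma \ref{lem:small-norm-is-good}.
\item Almost injectivity follows from Lemma \ref{lem:lie-norm-continuity}, applied to $Z_i$ with $\phi_i(\exp Z_i)\to e$.
\end{itemize}
Separately, Step 1 tacitly assumes $\limsup_i\dim(G_i)<\infty$. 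Since the constant $2C_0$ in Lemma \ref{lem:norm-approximates} is dimension-free, you can cover the infinite case by restricting to $D$-dimensional subspaces of $\mathfrak{g}_i$.
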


\subsection{Escape norm}
Inspired by the work of Gleason  \cite{gleason}, 
the escape norm was introduced in  \cite{breuillard-green-tao, carolino}.
\begin{Def}[Escape norm]
Let $G $ be a group and $B \subset G$ an open symmetric set. The \emph{escape norm} $\Vert \cdot \Vert _{B} : G \to \mathbb{R}$  is defined as
\[    \Vert g \Vert _B : =  \inf \left\{ \frac{1}{m+1} \,\, \Big| \,\, g^j \in B  \text{ for all } j \in \{ 0, 1, \ldots , m \} \right\}  .                \]
If $G$ is a Lie group with Lie algebra $\mathfrak{g}$, we can also define $\vert \cdot \vert _B  : \mathfrak{g} \to \mathbb{R}$ as:
\[  \vert \vv \vert _B :   = \frac{1}{\tau _B (\vv)}  ,   \]
where $\tau _B (\vv) : = \inf \{ t > 0  \vert \exp (t \vv) \notin B \} \in (0, \infty ]$.  
\end{Def}

\begin{Rem}
In general, $\Vert \cdot \Vert _B : G \to \mathbb{R}$ is not continuous and  $\vert \cdot \vert _B : \mathfrak{g} \to \mathbb{R}$ is not necessarily a norm.  However,  $\vert \cdot \vert _B : \mathfrak{g} \to \mathbb{R}$ is positively homogeneous and in certain instances it can be approximated by an actual norm (see Lemma \ref{lem:norm-approximates} and \cite[Chapter 8]{carolino}).
\end{Rem}

In general, the Lie algebra escape norm bounds the group escape norm.

\begin{Pro}\cite{nsz}.\label{pro:norm-relation}
    For all $\vv \in \mathfrak{g}$ one has 
    \begin{equation}\label{eq:escape-less-than-norm}
         \Vert \exp (\vv) \Vert _{B}     \, \leq  \, \vert \vv \vert _{B}  . 
    \end{equation}
\end{Pro}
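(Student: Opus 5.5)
We need to show $\Vert \exp(\vv)\Vert_B \le |\vv|_B$, with $G$ a Lie group, $\mathfrak{g}$ its Lie algebra, and $B\subset G$ open symmetric. We argue directly from the two definitions.

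Set $\tau := \tau_B(\vv)\in(0,\infty]$, so that $|\vv|_B = 1/\tau$. We may assume $\tau<\infty$: if $\tau=\infty$, then $\exp(t\vv)\in B$ for all $t>0$, hence $\exp(\vv)^j=\exp(j\vv)\in B$ for every $j\ge 0$ (the case $j=0$ uses $e\in B$). Then $\Vert\exp(\vv)\Vert_B\le \frac{1}{m+1}$ for every $m$, so $\Vert\exp(\vv)\Vert_B=0=|\vv|_B$.

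Now take $\tau<\infty$ and let $m:=\lceil \tau\rceil-1\ge 0$, the largest integer strictly less than $\tau$. For $0\le j\le m$ we have $j<\tau$, and by the definition of $\tau$ as an infimum, $\exp(t\vv)\in B$ for all $0<t<\tau$. Together with $e\in B$ this gives $\exp(\vv)^j=\exp(j\vv)\in B$ for all $j\in\{0,\dots,m\}$. The identity $\exp(\vv)^j=\exp(j\vv)$ holds because $t\mapsto \exp(t\vv)$ is a one-parameter subgroup.

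Consequently $\Vert\exp(\vv)\Vert_B\le \frac{1}{m+1}=\frac{1}{\lceil\tau\rceil}\le \frac{1}{\tau}=|\vv|_B$, since $\lceil \tau\rceil\ge\tau$. This is \eqref{eq:escape-less-than-norm}.

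The only delicate point is that the infimum defining $\tau$ is strict: the argument uses only times $t<\tau$, so we never need to know whether $\exp(\tau\vv)$ itself lies in $B$. For the same reason, choosing $m$ as the largest integer strictly less than $\tau$ handles integer values of $\tau$ correctly.
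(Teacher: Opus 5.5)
Your argument is correct. You take $m$ to be the largest integer strictly below $\tau_B(\vv)$, note that $\exp(\vv)^j=\exp(j\vv)\in B$ for $0\le j\le m$, and conclude $\Vert \exp(\vv)\Vert_B\le \frac{1}{m+1}\le \frac{1}{\tau_B(\vv)}$, with the case $\tau_B(\vv)=\infty$ handled separately. The paper gives no proof of this statement and simply quotes it from \cite{nsz}; your direct verification from the two definitions is the natural elementary argument.
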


The following inequalities, due to Breuillard, Green, and Tao,  are known as the Gleason Lemmas. They follow from \cite[Theorem 8.1]{breuillard-green-tao} (see also \cite[Theorem 7.7]{carolino} and \cite[Theorem 7.1]{nsz}).

\begin{Thm}\label{thm:gleason} \cite{breuillard-green-tao}.     Let $\phi_i : G_i \to G$ be a sequence of good approximations. Assume $G$ is a Lie group and equip its Lie algebra $\mathfrak{g}$ with an inner product.  Set  $B   : = \exp (B_r^{\mathfrak{g}}(0)) \subset G$ and let $B_i \subset G_i$ be the open symmetric sets given by Proposition  \ref{pro:zoom-i}. Then there is $C_0 \geq 1 $ such that if $r > 0 $ is small enough, for sufficiently large $i$ one has:
    \begin{itemize}
        \item $ \Vert g_1 \cdots g_m \Vert _{B_i} \leq C_0 \sum_{j =1}^m \Vert g_j \Vert _{B_i}   $ for all $ g_1, \ldots , g_m \in G_i.$
        \item $\Vert ghg^{-1} \Vert_{B_i} \leq 1000 \, \Vert h \Vert _{B_i} $ for all $g,h \in B_i^{10}$.
        \item $\Vert [g,h] \Vert_{B_i} \leq C_0 \, \Vert g \Vert _{B_i} \Vert h \Vert _{B_i} $ for all $g,h \in B_i^{10}$.
    \end{itemize}
\end{Thm}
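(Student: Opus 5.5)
The plan is to deduce all three inequalities from the Gleason lemmas of Breuillard--Green--Tao \cite[Theorem 8.1]{breuillard-green-tao} (in the form of \cite[Theorem 7.7]{carolino}). Those lemmas apply to \emph{strong approximate groups}: pairs $(A',S')$ of open symmetric sets with $S'\subset A'$ satisfying three conditions.
\begin{enumerate}
\item (Doubling) $A'^4$ is covered by $K$ left translates of $A'$.
\item (Sub-doubling) An analogous covering condition holds for $S'$.
\item (Trapping) A condition of the form: if $g, g^2,\ldots ,g^{N}\in S'$ then $g\in A'$, with $N$ depending only on $K$.
\end{enumerate}
Their conclusion gives the three bullets with $C_0$ depending only on $K$. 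The constant $1000$ in the conjugation bound is also universal there, since conjugation by $g\in B_i^{10}$ only costs a bounded number of translates. So the whole task is to show that $B_i$, together with a suitable smaller set $S_i$, forms a strong approximate group for all large $i$, with $K$ independent of $i$. Only the escape norm of $B_i$ enters the conclusion, so the auxiliary set $S_i$ is harmless.

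\textbf{Step 1: verify the conditions in $G$.} For $r$ small, work with $B=\exp(B^{\mathfrak g}_r(0))$ and $S=\exp(B^{\mathfrak g}_{\epsilon r}(0))$. Using the Baker--Campbell--Hausdorff formula, or simply Lemma \ref{lem:exponentials} applied to $\Ad$, we get the following.
\begin{itemize}
\item $B^{4}\subset \exp(B^{\mathfrak g}_{5r}(0))$ is covered by $K=K(\dim G)$ translates $x_kB'$, where $B'=\exp(B^{\mathfrak g}_{r/2}(0))$ is strictly smaller than $B$.
\item Trapping holds with a margin: if $\exp(\vv)^j\in \overline{S}$ for $j\le N$, then $|\vv|$ is small enough that $\exp(\vv)\in \overline{B'}$.
\end{itemize}
The margins between $B'$ and $B$ are what make the transfer possible.

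\textbf{Step 2: transfer doubling.} By Proposition \ref{pro:zoom-i}, the $\phi_i$ are good approximations with regular neighborhoods $B_i$ and $B$, and likewise with $S_i$ and $S$. Use almost surjectivity \eqref{item:ga-1} to choose $y_k^i\in B_i^{4}$ with $\phi_i(y_k^i)\to x_k$. For $g\in B_i^4$, pick $k$ so that $\phi_i(g)$ is near $x_kB'$. Almost homomorphism \eqref{item:ga-3} gives that $\phi_i((y_k^i)^{-1}g)$ lies in a compact subset of $B$. No compression \eqref{item:ga-4} then puts $(y_k^i)^{-1}g\in B_i$. Hence $B_i^4\subset\bigcup_k y_k^iB_i$ for large $i$. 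The same argument handles $S_i$.

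\textbf{Step 3: transfer trapping (the main obstacle).} Here uniformity in $i$ is delicate. Argue by contradiction: suppose infinitely many $g_i$ satisfy $g_i^j\in S_i$ for $j\le N$ but $g_i\notin B_i$. By Lemma \ref{lem:convergent-sequences}, after a subsequence $\phi_i(g_i^j)\to h_j\in\overline{S}$ for each $j\le N$. By \eqref{item:ga-3}, $h_j=h_1^j$, so the margin version of trapping gives $h_1\in\overline{B'}$, a compact subset of $B$. Then \eqref{item:ga-4} yields $g_i\in B_i$, a contradiction. The points to be careful about are:
\begin{itemize}
\item choosing $S$ and $B'$ so that the limiting closed conditions still force membership in a compact subset of $B$;
\item making sure $N$ depends only on $K$, hence not on $i$.
\end{itemize}
If the exact BGT trapping hypothesis is phrased differently, for example with $S^{N}$ in place of powers of a single element, the same compactness-plus-no-compression argument applies verbatim.

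Once Steps 2 and 3 hold for all large $i$, we apply \cite[Theorem 8.1]{breuillard-green-tao} to $(B_i,S_i)$. This yields the three inequalities with a constant $C_0$ depending only on $K=K(\dim G)$, hence independent of $i$, provided $r$ is small enough for Step 1.
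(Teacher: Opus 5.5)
Your overall route matches what the paper relies on. The paper gives no argument of its own; it derives the statement from the Gleason lemmas of \cite[Theorem 8.1]{breuillard-green-tao} (via \cite[Theorem 7.7]{carolino} and \cite[Theorem 7.1]{nsz}). Checking the hypotheses of that theorem for $B_i$ uniformly in $i$, by transferring from $G$ with Lemma \ref{lem:convergent-sequences}, almost homomorphism and no compression, is the natural way to do this.

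\textbf{The gap:} the hypotheses you verify are not the ones the theorem needs.
\begin{itemize}
\item Your trapping condition is vacuous. Since $S'\subset A'$, the assumption $g\in S'$ alone already gives $g\in A'$, so Step 3 proves a tautology.
\item The trapping condition of \cite{breuillard-green-tao} is an NSS-type statement in which the powers are only assumed to lie in a \emph{larger} set: if $g,g^2,\dots,g^{1000}\in A^{100}$, then $g\in A$.
\item This condition, not a count of translates, is where the constant $1000$ comes from. Suppose $h,\dots,h^n\in A$ and $g\in A^{10}$. For $k\le n/1000$ and $l\le 1000$ one has $(gh^kg^{-1})^l=gh^{kl}g^{-1}\in A^{21}\subset A^{100}$. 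Trapping then gives $(ghg^{-1})^k\in A$, so $\Vert ghg^{-1}\Vert_A\le 1000\,\Vert h\Vert_A$.
\item Your list omits the normality-type axiom on $S$. It requires that conjugates, by elements of $A^{10}$, of a large power of $S$ (exponent fixed in terms of $K$) stay inside $A$. The ``sub-doubling'' item is also left unspecified.
\end{itemize}
As a result, Step 1 currently verifies nothing concrete.

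Your transfer mechanism does handle the correct axioms.
\begin{itemize}
\item \emph{Trapping in $G$.} For $r$ small, $\overline{B}^{100}\subset\exp(\overline{B_{101r}(0)})$ and $\exp$ is injective on $B_{300r}(0)$. So if $h,\dots,h^{1000}\in\overline{B}^{100}$, induction on $j$ gives $\log(h^j)=j\log h$ with $|j\log h|\le 101r$. Hence $h$ lies in the compact set $\exp(\overline{B_{r/5}(0)})\subset B$.
\item \emph{Trapping for $B_i$.} Your Step 3 argument, with $S_i$ replaced by $B_i^{100}$, then yields trapping for $B_i$ for all large $i$.
\item \emph{Conjugation axiom.} This transfers the same way once $S$ is so small that the compact set $\overline{B}^{10}\,\overline{S}^{N}\,\overline{B}^{10}$ lies inside $B$. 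Indeed $gsg^{-1}\in B_i^{N+20}$, so no compression applies.
\end{itemize}
Two things remain.
\begin{itemize}
\item In Step 1 you must check that a single $S$ satisfies all axioms with the constants the definition prescribes, uniformly in small $r$. Shrinking $S$ helps the conjugation axiom but worsens the covering of $B$ by translates of $S$, so the exact form of the definition matters here.
\item For covering statements involving $S_i$, choose $S_i$ approximating $S$ as in Lemma \ref{lem:approximate}. No compression for the pair $(S_i,S)$ from Proposition \ref{pro:zoom-i} only controls elements of $S_i^n$, not of $B_i^n$, so ``the same argument handles $S_i$'' does not go through as stated.
\end{itemize}
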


The following lemma provides a situation when  the Lie algebra escape norms can  be approximated by actual norms. 

\begin{Lem}\label{lem:norm-approximates}
    \cite{nsz, carolino}. Let $G_i,$ $G$, $\phi_i$, $B$, $B_i$, $C_0$ be as in Theorem \ref{thm:gleason}. Assume the sequence $G_i$ has the NSS property and  consists of Lie groups, and let $\mathfrak{g}_i$ be the Lie algebra of $G_i$. Then for $i$ large enough there is a genuine norm $\Vert \cdot \Vert _{i} : \mathfrak{g}_i \to \mathbb{R}$ with
    \begin{equation}\label{eq:norm-approximates}
     \vert \vv \vert _{B_i} \leq \Vert \vv \Vert _i \leq 2C_0 \vert \vv \vert _{B_i}       
    \end{equation}
    for all $\vv \in \mathfrak{g}_i$. 
\end{Lem}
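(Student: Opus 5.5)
The plan is to build $\Vert \cdot \Vert_i$ as the convex (subadditive) envelope of $\vert \cdot \vert_{B_i}$ and to use the first Gleason inequality of Theorem \ref{thm:gleason} to show this envelope loses at most a factor $C_0$ (plus a little slack absorbed in the $2$). Concretely, for $\vv \in \mathfrak{g}_i$ set
\[ N_i(\vv) := \inf \Big\{ \sum_{j=1}^k \vert \vv_j \vert_{B_i} \; \Big| \; k \in \mathbb{N},\ \vv = \vv_1 + \cdots + \vv_k \Big\}, \qquad \Vert \vv \Vert_i := 2C_0 \, N_i(\vv). \]
Since $\vert \cdot \vert_{B_i}$ is positively homogeneous, $N_i$ is positively homogeneous and subadditive. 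It is symmetric because $B_i$ is symmetric, so $\exp(-t\vv) = \exp(t\vv)^{-1}$ gives $\vert -\vv \vert_{B_i} = \vert \vv \vert_{B_i}$. Hence $N_i$ is a seminorm. The trivial decomposition gives $N_i \leq \vert \cdot \vert_{B_i}$, which is the right-hand inequality in \eqref{eq:norm-approximates}. Everything therefore reduces to the reverse estimate
\[ \vert \vv \vert_{B_i} \leq 2 C_0 \sum_j \vert \vv_j \vert_{B_i} \quad \text{whenever } \vv = \textstyle\sum_j \vv_j . \]
This estimate also makes $\Vert\cdot\Vert_i$ definite once we know $\vert \vv \vert_{B_i} > 0$ for $\vv \neq 0$.

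Key steps, in order:

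\textbf{Step 1.} Relate the Lie algebra escape norm to the group escape norm along rescalings. The claim is that for $\vv \in \mathfrak{g}_i$,
\[ \vert \vv \vert_{B_i} \leq 2 \liminf_{K\to\infty} K \, \Vert \exp(\vv/K) \Vert_{B_i}. \]
The reason is that $\Vert \exp(\vv/K) \Vert_{B_i} \leq 1/(m+1)$ means $\exp(j\vv/K) \in B_i$ for $0 \leq j \leq m$. Since the mesh $1/K$ goes to zero, the path $t \mapsto \exp(t\vv)$ stays in $B_i$ (or at least in $\overline{B_i}$) up to time roughly $m/K$. The factor $2$ is there to absorb the closure issue. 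The way to absorb it is to run the whole argument with a slightly larger set $B_i' \supset \overline{B_i}$ produced by Proposition \ref{pro:zoom-i} and Lemma \ref{lem:approximation-by-internal-sets} for a slightly larger radius $r'$; the Gleason constants are uniform for all small radii.

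\textbf{Step 2.} Use Lie--Trotter. For $\vv = \sum_{j=1}^k \vv_j$ we have
\[ \exp(\vv/K) = \lim_{M\to\infty} \big(\exp(\vv_1/KM)\cdots\exp(\vv_k/KM)\big)^M . \]
The first bullet of Theorem \ref{thm:gleason} applied to this product of $kM$ factors, followed by Proposition \ref{pro:norm-relation}, gives
\[ \Vert (\cdots)^M \Vert_{B_i} \leq C_0 M \sum_j \Vert \exp(\vv_j/KM) \Vert_{B_i} \leq \frac{C_0}{K}\sum_j \vert \vv_j\vert_{B_i}. \]
The bound $\Vert h_M \Vert_{B_i} \leq 1/(m+1)$ means $h_M^l \in B_i$ for $l \leq m$. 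It passes to the limit $h = \exp(\vv/K)$ only with $B_i$ replaced by $\overline{B_i} \subset B_i'$, which is again handled by the radius enlargement of Step 1. Combining with Step 1 and letting $K\to\infty$ yields the reverse estimate.

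\textbf{Step 3 (definiteness).} Show that $\vert \vv \vert_{B_i} > 0$ for $\vv \neq 0$ once $i$ is large. If $\vert \vv \vert_{B_i} = 0$, then the closure $H_i$ of the one-parameter subgroup $\{\exp(t\vv)\}_{t \in \mathbb{R}}$ lies in $\overline{B_i} \subset A_i$. Every element of $H_i$ has escape norm $0$, so its powers never leave $B_i$. By the almost homomorphism property \eqref{item:ga-3}, the images $\phi_i(H_i)$ then lie in arbitrarily small neighbourhoods of the identity: a limit element has all its powers in $\overline{B}$, and for $r$ small this forces it to be $e$. So a sequence of such $H_i$ would be a sequence of small subgroups, which the NSS hypothesis forbids from being nontrivial for large $i$.

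I expect Step 3 to be the main obstacle: $\vert\cdot\vert_{B_i}$ is defined for each fixed $i$, whereas NSS is a statement about sequences. The argument will therefore be by contradiction: suppose infinitely many $i$ admit some $\vv_i \neq 0$ with $\vert \vv_i \vert_{B_i} = 0$, and derive a nontrivial small sequence from them. The limit arguments in Steps 1--2 (the discontinuity of the escape norm) are a secondary technical point, handled by the radius enlargement.
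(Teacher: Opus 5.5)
The paper does not prove this lemma; it imports it from \cite{nsz, carolino}. Your architecture is sound: the convex envelope of $\vert\cdot\vert_{B_i}$, the $m$-fold Gleason inequality combined with Lie--Trotter (so only a factor $C_0$ is lost), and NSS for definiteness. Step 3 is fine, and you do not even need closures there. If $\vert \vv_i\vert_{B_i}=0$, the one-parameter subgroup $\{\exp(t\vv_i)\}$ already lies in $B_i$. Putting trivial groups at the remaining indices then gives a sequence of small subgroups that is not eventually trivial.

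The gap is the factor $2$. Step 1 is false for an individual $i$. A mesh argument only shows that $t\mapsto \exp(t\vv)$ stays in $\overline{B_i}$, and exit times from $B_i$ and from $\overline{B_i}$ can differ by any factor. For example, take $G_i=\mathbb{R}^2$, $B_i=(-1,1)^2\setminus\{\pm(\tfrac{1}{10},0)\}$ and $\vv=(1,0)$. Then $\vert\vv\vert_{B_i}=10$, but $K\Vert\exp(\vv/K)\Vert_{B_i}=1$ whenever $10\nmid K$.

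Enlarging the radius does not repair this. It only converts the problem into comparing $\vert\cdot\vert_{B_i}$ with $\vert\cdot\vert_{B_i'}$. Uniformity of $C_0$ in $r$ says nothing about that comparison, which fails for individual sets in exactly the same way.

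What you need is an asymptotic statement, uniform in $\vv$ for large $i$. This is Lemma~\ref{lem:norms-are-equal-if-small} with $\varepsilon=\tfrac12$: $\vert\ww\vert_{B_i}\leq 2\Vert\exp(\ww)\Vert_{B_i}$ whenever $\vert\ww\vert_{B_i}<\delta$, where the ball is taken in $\vert\cdot\vert_{B_i}$. Its proof normalizes and passes, via Proposition~\ref{pro:convergence-of-ops}, to a one-parameter subgroup of $G$. It then uses that $B=\exp(B_r(0))$ is round, so the limit leaves $\overline{B}$ immediately after leaving $B$, contradicting no expansion. That proof never uses the present lemma, so there is no circularity.

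The closure problem in Step 2 also disappears if you keep the Trotter error as an extra factor. Write $e_M:=h_M^{-1}\exp(\vv/K)=\exp(\ww_M)$ with $\ww_M\to 0$. Since $B_i$ is a neighborhood of $e$, Proposition~\ref{pro:norm-relation} gives $\Vert e_M\Vert_{B_i}\leq\vert\ww_M\vert_{B_i}\to 0$. Applying the Gleason inequality to all $kM+1$ factors then gives exactly $\Vert\exp(\vv/K)\Vert_{B_i}\leq \tfrac{C_0}{K}\sum_j\vert\vv_j\vert_{B_i}$.

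Now take $K$ large enough that $\vert\vv/K\vert_{B_i}<\delta$. This yields $\vert\vv\vert_{B_i}\leq 2K\Vert\exp(\vv/K)\Vert_{B_i}\leq 2C_0\sum_j\vert\vv_j\vert_{B_i}$, which is precisely your reverse estimate. The $2$ comes from $\varepsilon=\tfrac12$ in that asymptotic comparison, not from a closure argument.
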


\subsection{Local groups} In this section, we review the basics of local groups. We refer the reader to \cite{goldbring} and \cite[Appendix B]{breuillard-green-tao} for further discussion.

\begin{Def}[Local groups]
    A \emph{local group} is a pointed topological space $(G, e )$ equipped with an open set $\Omega \subset G \times G $ and continuous maps $m : \Omega \to G$ and $ ( \cdot ) ^{-1} : G \to G$ satisfying the following:
    \begin{itemize}
        \item For all $g \in G$, one has $(g,e), (e, g)\in \Omega$ and  
        \[ m( g , e)  = m(e  ,g) = g.\]
        \item For all $g \in G$, one has $(g,g^{-1}), (g^{-1}, g)\in \Omega$ and
        \[ m (g , g^{-1} )= m (g^{-1} , g ) = e .\]
        \item If $g,h,k \in G$ are such that $(g,h), (m(g , h ) , k), (h,k), (g,m (h , k)) \in \Omega$, then 
        \[    m   (m(g , h) , k) = m( g ,m (h , k)) .       \]
    \end{itemize}
    When no confusion arises, we denote the local group $(G,e , \Omega, m , ( \cdot  )^{-1} )$ simply by $G$. 
\end{Def}

\begin{Def}[Well defined products]
    For $g_1, \ldots , g_n \in G$, we say the product $g_1 \cdots g_n$ is \emph{well defined} if for each $1 \leq i \leq j \leq n $ there is an element $g_{[i,j]} \in G$ such that
    \begin{itemize}
        \item For each $1 \leq i \leq n$, one has  $g_{[i,i]} = g_i$.
        \item If $1 \leq i \leq j < k \leq n$, one has $(g_{[i,j]}, g_{[j+1, k]}) \in \Omega$ and 
        \[    g_{[i,k]} = m  (g_{[i,j]}, g_{[j+1, k]})   .   \]
    \end{itemize}
    In such a case, the element $g_{[1,n]} $ is uniquely defined and is denoted by $g_1\cdots g_n$. If in addition one has  $g_1 = \cdots = g_n = g$, the product $g_1 \cdots g_n $ is denoted by $g^n$. 
    
    For a subset $U \subset G$, by an abuse of notation we write $g_1 \cdots g_n \in U$ to denote the assertion that the product $g_1 \cdots g_n$ is well defined and it is in $U$.

    For subsets $W , $ $ U \subset G$ and $n \in \mathbb{N}$, we also  write $W ^n \subset U$ to denote the assertion that for any $g_1, \ldots , g_n \in W$ we have $g_1 \cdots g_n \in  U$. 
\end{Def}

\begin{Def}[Morphism]
    Let $G$ and $H$ be local groups. A \emph{morphism} between $G$ and $H$ is a function $\pi : G \to H$ with the property that if  $g_1g_2$ is well defined for some $g_1, g_2 \in G$, then $\pi (g_1)\pi(g_2)$ is well defined and equals $\pi (g_1g_2)$.  
\end{Def}

\begin{Def}[Multiplicative set]
    Let $G$ be a local group and $A \subset G$. We say $A$ is a \emph{multiplicative set} if $(A \cup A^{-1} )^{200} \subset G$. 
\end{Def}

\begin{Def}[Sub-local group]
    Given a local group $G$, a symmetric subset $H \subset G$ is called a \emph{sub-local group} if there is an open symmetric set $V \subset G$  such that 
\begin{itemize}
    \item $H \subset V$ and $H$ is closed in $V$.
    \item Whenever $h_1, h_2 \in H$ and $h_1  h_2 \in V$, one has $h_1 h_2 \in H$.
\end{itemize}
In such a case, $V$ is called an \emph{associated neighborhood} for $H$. 
\end{Def}
\begin{Def}[Normal sub-local group]
We say that a sub-local group $H$ is  \emph{normal} if it admits an associated neighborhood $V$ such that  
    \begin{itemize}
        \item Whenever $h \in H$, $a \in V $, and $a h a^{-1} \in V$, one has $a h a^{-1} \in H$. 
    \end{itemize}
    In such a case, $V$ is called a \emph{normalizing neighborhood} for $H$. 
\end{Def}

\begin{Pro}[Local group quotient] \label{pro:local-quotient}
 Let $G$ be a local group, $H \subset G $ a normal sub-local group with normalizing neighborhood $V$, and $W \subset G$ an open symmetric set. Assume $W^{6} \subset V$ and $V^6 \subset G$. Then
 \begin{itemize}
     \item The binary relation defined by 
     \[  x \sim y \text{ if and only if  } x^{-1} y \in H          \]
     is an equivalence relation on $W$.  
     \item Setting  
     \[     \Omega _{W, H } : = \{ \, ( a , b ) \in (W/\sim) ^{\times 2} \, \vert \,  xy \in  W \text{ for some }x \in a, \, y\in b \, \}   ,      \]
    the local group structure of $G$ induces a well defined inverse map $W / \sim  \to W / \sim $ and a well defined product $\Omega _{W,H} \to W/ \sim$, turning $W/ \sim$, equipped with the quotient topology, into a local group. 
 \end{itemize}
The local group obtained in Proposition \ref{pro:local-quotient} is denoted by   $(G/ H)_W$. 
\end{Pro}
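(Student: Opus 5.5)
We verify the two claims in order: first that $\sim$ is an equivalence relation on $W$, then that the operations descend to a local group on $W/\sim$. Every time we manipulate a product we track how many factors from $W$ or $V$ it involves. The hypotheses $W^6 \subset V$ and $V^6 \subset G$ are exactly what guarantee that all products we write are well defined, and that we may re-associate them using the associativity axiom of local groups. By the standard fact that generalized associativity holds for well-defined products in $V^6\subset G$, we treat products of up to six elements of $W$, and their inverses, as unambiguous elements of $V$.

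\textbf{Equivalence relation.} Reflexivity holds since $x^{-1}x = e \in H$; note $e\in H$ because $H$ is symmetric and nonempty and closed under products landing in $V$. For symmetry, if $x^{-1}y \in H$, then $(x^{-1}y)^{-1} = y^{-1}x$, and $H$ is symmetric. For transitivity, if $x^{-1}y, y^{-1}z \in H$, then $(x^{-1}y)(y^{-1}z) = x^{-1}z \in W^2 \subset V$. Since both factors lie in $H$ and the product lies in $V$, the sub-local group axiom gives $x^{-1}z\in H$.

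\textbf{Well-definedness of the inverse.} If $x\sim x'$, we must show $x^{-1}\sim x'^{-1}$, that is, $x x'^{-1}\in H$. We write
\[ x x'^{-1} = x\,(x^{-1}x')^{-1}\,x^{-1}, \]
which is a conjugate of an element $(x^{-1}x')^{-1}$ of $H$ by $x\in W\subset V$. The conjugate lies in $W^2\subset V$, so the normalizing property gives $xx'^{-1}\in H$.

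\textbf{Well-definedness of the product.} This is the main step. Suppose $x\sim x'$ and $y\sim y'$ with $xy, x'y' \in W$. We need $(xy)^{-1}x'y' \in H$, and we expand
\[ (xy)^{-1}x'y' = y^{-1}(x^{-1}x')y' = \bigl[y^{-1}(x^{-1}x')y\bigr]\,(y^{-1}y'). \]
Here $h_1 := x^{-1}x'\in H$ and $h_2 := y^{-1}y'\in H$. The conjugate $y^{-1}h_1y$ is a product of at most four $W$-factors, so it lies in $V$, and normality puts it in $H$. The full expression is a product of four elements of $W$, hence lies in $V$, so the sub-local group property puts it in $H$. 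The delicate point is that the two bracketings above represent the same element; this is the case because all intermediate products have at most six $W$-factors and hence lie in $V$, where associativity applies.

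\textbf{Local group axioms.} The set $\Omega_{W,H}$ is open in the quotient topology, since its preimage is the union of $H$-translates of the open set $\{(x,y): xy\in W\}$, intersected with $W\times W$. Continuity of $m$ and of the inverse follows from the universal property of the quotient, because the quotient map is open (saturations of open sets are unions of translates $UH\cap W$). The identity, inverse and associativity axioms are inherited from $G$ by choosing representatives, using that classes are well defined.
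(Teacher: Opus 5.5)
The paper gives no proof of this proposition; it is quoted as background from Goldbring and from Appendix B of Breuillard--Green--Tao. So your argument has to stand on its own. Several parts of it are sound:
\begin{itemize}
\item the equivalence relation;
\item well-definedness of inversion and multiplication, where your factor count $y^{-1},x^{-1},x',y,y^{-1},y'$ in $W^6\subset V$ is exactly right;
\item the topology.
\end{itemize}

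The gap is your last sentence. Associativity in $(G/H)_W$ is not ``inherited from $G$ by choosing representatives.'' The hypothesis $(a,b),(ab,c),(b,c),(a,bc)\in\Omega_{W,H}$ only supplies four unrelated witnesses:
\begin{itemize}
\item $x\in a$, $y\in b$ with $xy\in W$;
\item $u\in ab$, $z\in c$ with $uz\in W$;
\item $y'\in b$, $z'\in c$ with $y'z'\in W$;
\item $x''\in a$, $v\in bc$ with $x''v\in W$.
\end{itemize}
Nothing guarantees a single triple $x\in a$, $y\in b$, $z\in c$ with $xy$, $yz$, $xyz$ all in $W$. You would like to absorb $h:=(xy)^{-1}u\in H\cap W^2$ into a representative of $b$ or $c$, but $yh$ and $hz$ need not lie in $W$. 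Your well-definedness lemma does not bridge the witnesses either. It compares two products that both lie in $W$, whereas the natural intermediate elements $(xy)z$ and $x(y'z')$ only lie in $W^3$, where $\sim$ is not even defined.

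The missing step is to compare classes through $W^3$. Three memberships in $H$ hold:
\begin{itemize}
\item $(uz)^{-1}\bigl((xy)z\bigr)=z^{-1}\bigl(u^{-1}(xy)\bigr)z\in H$, since it is a conjugate of an element of $H$ lying in $W^4\subset V$.
\item $(xyz)^{-1}(xy'z')=z^{-1}y^{-1}y'z'=\bigl[z^{-1}(y^{-1}y')z\bigr](z^{-1}z')\in H$, using exactly six $W$-factors.
\item $\bigl(x(y'z')\bigr)^{-1}(x''v)\in H$, by your own compatibility computation. That computation only needs the four elements $x,x'',y'z',v$ to lie in $W$, not their products.
\end{itemize}
To chain these, set $g_1=uz$, $g_2=xyz$, $g_3=xy'z'$, $g_4=x''v$. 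The identities $(g_1^{-1}g_2)(g_2^{-1}g_3)=g_1^{-1}g_3$ and $(g_1^{-1}g_3)(g_3^{-1}g_4)=g_1^{-1}g_4$ hold because all $g_j^{\pm1}$ lie in $W^3\subset V$, and four-fold products in $V$ are well defined by $V^6\subset G$. Moreover $g_1^{-1}g_3\in W^4$ and $g_1^{-1}g_4\in W^2$ both lie in $V$. So the sub-local group axiom applies twice and gives $(uz)^{-1}(x''v)\in H$, i.e.\ $(ab)c=a(bc)$. This is where $W^6\subset V$ and $V^6\subset G$ are genuinely needed, and it must be written out.

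Relatedly, continuity of the quotient product is cleanest via the universal property for the open surjection $\{(x,y)\in W\times W : xy\in W\}\to\Omega_{W,H}$. On that set the product lifts to the class of $xy$, which is not the case on the full preimage of $\Omega_{W,H}$.
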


\begin{Def}[Local Lie group]
    Let $G$ be a local group and a smooth manifold.  We say $G$ is a \emph{local Lie group} if the multiplication and inverse maps are smooth. 
\end{Def}

 For several structural results in Lie theory, their proofs are purely local, so they carry over to local Lie groups (see for example \cite{hall, knapp}).

\begin{Pro}\label{pro:lie-local}          
    Let $G$ be a local Lie group. Then 
    \begin{itemize}
        \item  The tangent space $\mathfrak{g} : = T_eG$ carries a natural Lie algebra structure.
        \item  There is an open set $S \subset \mathfrak{g}$, star-shaped around $0$, such that the exponential map $\exp : S \to G $ is well defined in the usual way. 
        \item  Any sub-local group $H \subset G$ is a submanifold with 
        \[  \mathfrak{h} : = T_e H  = \bigcup_{\varepsilon > 0 } \{ \vv \in \mathfrak{g} \, \vert \,  \exp (t \vv ) \in H  \text{ for all } t \in (- \varepsilon , \varepsilon ) \}   . \]
        Moreover, $\mathfrak{h}$ is a Lie subalgebra of $\mathfrak{g}$.
        \item  If in addition, $H$ is  normal, then $\mathfrak{h}$ is an ideal of $\mathfrak{g}$.  Moreover, $\mathfrak{g} / \mathfrak{h}$ is naturally isomorphic to the Lie algebra of the quotient local group $(G/H)_W$, where $W$ is as in Proposition \ref{pro:local-quotient}. 
    \end{itemize}
\end{Pro}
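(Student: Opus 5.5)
The statement is Proposition~\ref{pro:lie-local}, a collection of standard facts about local Lie groups. The plan is to reduce every claim to the classical global arguments, which only ever use the group law in a small identity neighborhood, using the equivalence between local Lie groups and Lie algebras.

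\textbf{Lie algebra and exponential.} For the first item, we define left-invariant vector fields on a neighborhood of $e$ by $X_\vv(g) := d_e L_g(\vv)$, where $L_g(h)=m(g,h)$ is defined for $h$ near $e$ and $g$ near $e$. Associativity on $\Omega$ shows that these fields are $L_g$-related wherever the compositions are defined. The bracket of two left-invariant fields is again left-invariant, and this defines $[\cdot,\cdot]$ on $\mathfrak{g}$; the Jacobi identity is inherited from vector fields. Equivalently, the bracket is the second-order term of $m$ in local coordinates. For the second item, we let $\exp(\vv)$ be the time-one value of the integral curve of $X_\vv$ through $e$. By smooth dependence on parameters, the set $S$ of $\vv$ for which the curve exists up to time one is open, and it is star-shaped because $\exp(t\vv)$ is the integral curve at time $t$ (rescaling). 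The relation $\exp((s+t)\vv)=\exp(s\vv)\exp(t\vv)$ holds whenever the terms are defined, since both sides solve the same ODE.

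\textbf{Sub-local groups are submanifolds.} This is the step requiring real work: a closed-subgroup (Cartan) theorem for local groups. First, I would restrict $G$ to a neighborhood $U=\exp(B)$ on which $\exp$ is a diffeomorphism and all relevant products lie in the associated neighborhood $V$. Next, define $\mathfrak{h}$ as in the statement. To show that $\mathfrak{h}$ is a linear subspace, I would use the Lie product formula
\[\exp(\vv+\ww)=\lim_{n\to\infty}(\exp(\vv/n)\exp(\ww/n))^n,\]
all of whose partial products lie in $V$ for $\vv,\ww$ small. Closedness of $H$ in $V$ and closure under products in $V$ then give $\exp(t(\vv+\ww))\in H$. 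The same argument with the commutator formula shows $\mathfrak{h}$ is closed under brackets.

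The core of the argument is to prove that $\exp$ maps a neighborhood of $0$ in $\mathfrak{h}$ onto a neighborhood of $e$ in $H$. I would argue by contradiction, following the classical proof. Choose a complement $\mathfrak{m}$ and the local diffeomorphism $(\xx,\yy)\mapsto\exp(\xx)\exp(\yy)$ from $\mathfrak{m}\times\mathfrak{h}$. If $h_k\to e$ in $H$ with nonzero $\mathfrak{m}$-component $\xx_k$, we have $\exp(\xx_k)\in H$. Passing to a subsequence, $\xx_k/|\xx_k|\to\ww\in\mathfrak{m}$. Integer powers $\exp(\xx_k)^{n}$ with $n|\xx_k|\approx t$ stay in $V$, hence stay in $H$ by iterated closure, and they converge to $\exp(t\ww)$. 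Closedness in $V$ then puts $\ww\in\mathfrak{h}$, contradicting $\ww\in\mathfrak{m}\setminus\{0\}$.

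The main obstacle is bookkeeping: every power $\exp(\xx_k)^n$ must be a well-defined product whose partial products remain in $V$. I would handle this by choosing $t$ small so that the partial products stay in a ball where $\exp$ is injective. With this, $H$ is a submanifold near $e$; translating by elements $h\in H$ (using $L_h$ near $e$) gives the submanifold structure everywhere, with $T_eH=\mathfrak{h}$.

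\textbf{Normal case and the quotient.} If $H$ is normal, then for $\vv\in\mathfrak{h}$ and $a=\exp(s\ww)$ small, we have $a\exp(t\vv)a^{-1}\in H$. Differentiating in $t$ shows $\Ad(a)\vv\in\mathfrak{h}$, and differentiating in $s$ gives $[\ww,\vv]\in\mathfrak{h}$, so $\mathfrak{h}$ is an ideal. For the quotient $(G/H)_W$ of Proposition~\ref{pro:local-quotient}, the slice $\exp(\mathfrak{m}\cap B')$ meets each class near $e$ exactly once, by the submanifold chart just built. This gives smooth charts in which the induced multiplication is smooth, so the quotient is a local Lie group. The projection $\pi$ is a smooth morphism with $d\pi$ surjective and kernel $\mathfrak{h}$, and it preserves brackets because brackets are computed from second-order terms of the multiplication. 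Hence $d\pi$ induces an isomorphism $\mathfrak{g}/\mathfrak{h}\cong T_e(G/H)_W$.
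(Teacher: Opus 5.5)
Your proposal is correct and follows the same route as the paper, which gives no separate proof and only remarks that the classical arguments \cite{hall, knapp} are purely local and so carry over to local Lie groups; your sketch is exactly that localization, including the Cartan-type argument for sub-local groups and the slice charts for $(G/H)_W$. The one piece of bookkeeping worth writing out is that $\exp(\mathfrak{h}\cap B)\subset H$ for a fixed star-shaped ball $B$ with $\exp(B)\subset V$. This follows from a short continuation argument using that $H$ is closed in $V$ and closed under products landing in $V$, and you use it implicitly both when asserting $\exp(\xx_k)\in H$ and when identifying $H$ near $e$ with $\exp$ of a neighborhood of $0$ in $\mathfrak{h}$.
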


\subsection{Ultrafilters, ultraproducts, and ultralimits}
In this section, we review the basics of ultrafilters and ultralimits.  We refer the reader to \cite[Section 2]{gromov-asymptotic} and \cite[Appendix A]{breuillard-green-tao} for further discussion.  We denote by $\mathcal{P}(\mathbb{N}) $ the power set of the natural numbers.

\begin{Def}[Ultrafilter]
    We say $\alpha : \mathcal{P}(\mathbb{N}) \to \{ 0, 1 \}$ is an \emph{ultrafilter} if $\alpha (\mathbb{N}) = 1$ and  $  \alpha (A \cup B) = \alpha (A) + \alpha (B)  $ whenever $A \cap B = \emptyset$. We say that an ultrafilter is \emph{non-principal} if $\alpha (F) = 0$ whenever $F $ is finite. 
\end{Def}

Using the axiom of choice, one can show that non-principal ultrafilters exist. For the remainder of this paper, we fix a non-principal ultrafilter and call it $\alpha$. 

\begin{Def}[Ultraproduct]
    Let $A_i$ be a sequence of sets. The \emph{ultraproduct} of the sequence $A_i$ is defined as the quotient
    \[   \prod_{i \to \alpha} A_i : =    \left( \prod _{i \in \mathbb{N}} A_i \right) / \sim  ,     \]
    where two sequences $(a_i)$ and $(b_i)$ in $\prod_{i\in \mathbb{N}}A_i$ are $\sim$-related if $\alpha (\{ i \in \mathbb{N} \, \vert \, a_i = b_i \}  ) = 1$.  
\end{Def}

\begin{Rem}
Given a sequence of local groups $G_i$, the ultraproduct   $ \prod _{i \to \alpha } G_i ,   $ equipped with the discrete topology, is naturally a local group. Moreover,  given multiplicative sets $A_i \subset G_i$, the ultraproduct  $\prod_{i \to \alpha }A_i $  is naturally a multiplicative set in  $\prod_{i \to \alpha }G_i $.     
\end{Rem}

\begin{Def}[Ultraconvergence]
    Let $X$ be a topological space and $x_i $ a sequence of elements of $X$. We say the sequence $x_i$ \emph{ultraconverges} to a point $p \in X$ if for any neighborhood $U$ of $p$, one has $\alpha ( \{ i \in \mathbb{N} \, \vert \, x_i \in U  \} ) = 1$.  In such a case, we say that $p$ is the \emph{ultralimit} of the sequence $x_i$, and we write $ p = \lim_{i \to \alpha} x_i    . $
\end{Def}

\begin{Rem}
It is easy to see that if $X$ is a compact Hausdorff topological space, then any sequence in $X$ ultraconverges to a unique ultralimit.     
\end{Rem}

\section{Quantitative Lie theory}\label{sec:lie}

In this section, we prove a couple of quantitative versions of elementary facts about Lie groups. We give complete proofs for convenience of the reader. 

The first main result of this section states that the product of two elements in the exponential image of a small ball around the origin is in the exponential image of a somewhat larger ball.  It will be used to show that the set $H$ constructed in step 2 of the proof of Theorem \ref{thm:main-good} is a sub-local group of the limit group (see Section \ref{sec:outline}).  The proof follows the classical proof of the Baker--Campbell--Hausdorff formula (see for example \cite{hall, knapp}). 
\begin{Lem}\label{lem:bch-quant}
    Let $G$ be a Lie group with Lie algebra  $\mathfrak{g}$, and $C \geq 1$. Assume $\mathfrak{g}$ is equipped with a norm $\Vert \cdot \Vert $ and  
\begin{equation}\label{eq:small-ad}
      \Vert     [\xx , \yy ]  \Vert  \leq  C \Vert \xx \Vert \Vert \yy  \Vert  
\end{equation}
for all $\xx , \yy \in \mathfrak{g}$. Then for all $\xx , \yy \in B_{1/16C}^{\mathfrak{g}}(0)$, there is $\zz \in B_{1/2C}^{\mathfrak{g}}(0)$ with
\[     \exp (\zz) = \exp (\xx) \exp (\yy) .     \]
\end{Lem}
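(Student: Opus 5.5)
The plan is to follow the classical ODE proof of the Baker--Campbell--Hausdorff formula, using the bound \eqref{eq:small-ad} as a quantitative control. Since \eqref{eq:small-ad} gives $\Vert \ad_{\xx} \Vert \leq C \Vert \xx \Vert$ in operator norm, all power series in $\ad$ that appear will converge in the relevant range. Fix $\xx, \yy \in B^{\mathfrak{g}}_{1/16C}(0)$ and look for a $C^1$ curve $\zz : [0,1] \to \mathfrak{g}$ with $\zz(0) = \xx$ and
\[ \exp(\zz(t)) = \exp(\xx)\exp(t\yy) \quad \text{for all } t \in [0,1]. \]
Then $\zz := \zz(1)$ is the desired element, provided we show $\Vert \zz(t) \Vert < 1/2C$ throughout.

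\textbf{Step 1 (derivative of $\exp$).} Recall the standard identity
\[ \frac{d}{dt}\exp(\zz(t)) = (L_{\exp(\zz(t))})_\ast \, F(\ad_{\zz(t)})(\zz'(t)), \qquad F(T) := \frac{\Id - e^{-T}}{T} = \sum_{k \geq 0} \frac{(-T)^k}{(k+1)!}. \]
Differentiating the defining relation gives $(L_{\exp(\zz)})_\ast \yy$ on the right-hand side. So the ODE we must solve is $F(\ad_{\zz})(\zz') = \yy$, that is,
\[ \zz' = F(\ad_{\zz})^{-1}\yy. \]

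\textbf{Step 2 (the ODE is well posed while $\zz$ stays small).} If $\Vert \zz \Vert < 1/2C$, then $\Vert \ad_{\zz} \Vert < 1/2$, and
\[ \Vert F(\ad_{\zz}) - \Id \Vert \leq \sum_{k \geq 1} \frac{(1/2)^k}{(k+1)!} \leq e^{1/2} - 1 - \tfrac12 < \tfrac12. \]
By Proposition \ref{pro:rudin}, $F(\ad_{\zz})$ is invertible and $\Vert F(\ad_{\zz})^{-1} \Vert \leq 1 + 2(e^{1/2} - 3/2) \leq 2$. On the open set $B^{\mathfrak{g}}_{1/2C}(0)$ the right-hand side is therefore smooth in $\zz$, so a local solution with $\zz(0) = \xx$ exists. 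It satisfies $\Vert \zz'(t) \Vert \leq 2\Vert \yy \Vert < 1/8C$. Integrating, $\Vert \zz(t) \Vert \leq \Vert \xx \Vert + t/8C < 1/16C + 1/8C < 1/2C$ for $t \leq 1$. A standard continuation argument (the solution cannot leave the ball before time $1$) then shows the solution exists on all of $[0,1]$ and stays in $B^{\mathfrak{g}}_{3/16C}(0)$.

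\textbf{Step 3 (the solution does what we want).} Let $\gamma_1(t) = \exp(\zz(t))$ and $\gamma_2(t) = \exp(\xx)\exp(t\yy)$. Both curves start at $\exp(\xx)$. By Step 1, both satisfy $\gamma'(t) = (L_{\gamma(t)})_\ast \yy$, the flow of a left-invariant vector field, so by ODE uniqueness $\gamma_1 = \gamma_2$. Evaluating at $t = 1$ gives the claim with $\zz = \zz(1) \in B^{\mathfrak{g}}_{1/2C}(0)$.

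The main obstacle is Step 1, justifying the derivative formula for $\exp$. I would prove it in the standard way: define $\Gamma(s,t) = \exp(-s\zz(t))\,\partial_t \exp(s\zz(t))$, read as an element of $\mathfrak{g}$ via left translation. One checks that $\partial_s \Gamma = \zz'(t) - \ad_{\zz}\Gamma$ with $\Gamma(0,t) = 0$, and solving this linear ODE in $s$ gives $\Gamma(1,t) = F(\ad_{\zz})\zz'$. The series manipulations are legitimate because $\ad_{\zz}$ is a bounded operator with norm at most $C\Vert \zz \Vert$. Nothing here depends on the choice of norm, since all of the estimates go through \eqref{eq:small-ad}.
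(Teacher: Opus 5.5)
Your proposal is correct and is essentially the paper's proof. Your ODE $\zz' = F(\ad_{\zz})^{-1}\yy$ is the flow of the lifted left-invariant field $\tilde{Y}(\xx) = \Psi_{\xx}^{-1}(\yy)$, with the same bound $\Vert \tilde{Y} \Vert \leq 2\Vert \yy \Vert < 1/8C$ keeping the trajectory in $B^{\mathfrak{g}}_{1/2C}(0)$, and your uniqueness-of-integral-curves step is the paper's observation that $\tilde{Y}$ and $Y$ are $\exp$-related; the only slip is harmless, since $\sum_{k\geq 1} (1/2)^k/(k+1)! = 2(e^{1/2}-3/2) \approx 0.30$ rather than at most $e^{1/2}-3/2$, which is still below $\tfrac12$ and still gives $\Vert F(\ad_{\zz})^{-1}\Vert \leq 1 + 4(e^{1/2}-3/2) < 2$.
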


\begin{proof}
Using the fact that $\mathfrak{g}$ is a finite-dimensional vector space, we identify $T_{\xx} \mathfrak{g}$ with $\mathfrak{g}$ for each $\xx \in \mathfrak{g}$. Doing so,  equips $T_{\xx} \mathfrak{g}$ with the norm $\Vert \cdot \Vert $.  Consider the map 
\[ \Psi _{\xx}  : \mathfrak{g} \cong T_{\xx} \mathfrak{g}  \to \mathfrak{g}   \]
defined by 
\begin{equation}\label{eq:psi-x-definition}
      \Psi _{\xx} : = ( L _ {\exp (\xx)} ) ^{-1}_{\ast} \circ [ d_{\xx} \exp ] . 
\end{equation}
We know \cite[Section 5.4]{hall} that for each $\xx  \in \mathfrak{g}$ we  have 
\[
 \Psi _{\xx}  =     \sum_{j = 0} ^{\infty} \frac{(-1)^j  \ad _{\xx} ^j }{(j+1)!}  .    
\]
The hypothesis \eqref{eq:small-ad} means $\Vert \ad_{\xx} \Vert \leq C \Vert \xx \Vert $ for all $\xx \in \mathfrak{g}$, so we have
\begin{align*}
       \Vert \Psi_{\xx} - \Id _{\mathfrak{g}} \Vert  
       & \leq   \sum_{j = 1} ^{\infty} \frac{ \Vert  \ad _{\xx} \Vert ^j}{(j+1)!} \\
        & \leq \sum_{j = 1} ^{\infty} \frac{ C ^j \Vert \xx \Vert ^j}{(j+1)!} \\
        & \leq  \tfrac{1}{2} C \Vert \xx \Vert e ^{C \Vert \xx \Vert } .
\end{align*}  
By Proposition \ref{pro:rudin}, this shows that the exponential map is a local diffeomorphism in $ B : =  B_{1/2C} ^{\mathfrak{g}} (0) $ and 
\begin{equation}\label{eq:psi-inverse-good}
     \Big \Vert  \Psi _{\xx} ^{-1}   - \Id _{\mathfrak{g}}     \Big \Vert \leq  2 C \Vert \xx \Vert    
\end{equation}
for all $\xx \in B$. 

Fix $\yy \in B_{1/16C}^{\mathfrak{g}}(0)$ and consider the left-invariant vector field $Y \in \mathfrak{X}(G)$ given by 
\[        Y (g) : = (L_g)_{\ast} (\yy).                     \]
The lift $\tilde{Y} \in \mathfrak{X}(B)$ is then given by 
\begin{equation}\label{eq:exp-related}
    \tilde{Y}  (\xx) :  = \Psi_{\xx} ^{-1} (\yy )= [ d_{\xx} \exp ] ^{-1} Y (\exp (\xx)) .
\end{equation}
From \eqref{eq:psi-inverse-good}, we know that  $\Vert \tilde{Y }(\xx) \Vert \leq \tfrac{1}{8C}$ for each $\xx \in B$.   We denote by  $ \Phi ^{\yy} : G  \times \mathbb{R}  \to G $ and $\tilde{\Phi} ^{\yy} : B_{1/4C}^{\mathfrak{g}}(0) \times (- 2, 2 ) \to B$  the flows of $Y$ and $\tilde{Y}$, respectively. Note that
\[  \Phi^{\yy}_t (g)  = g \exp (t\yy)     \]
for all $g \in G$ and $t \in \mathbb{R}$. Hence, for all $\xx \in B_{1/4C}^{\mathfrak{g}}(0)$ we have
\[     \exp ( \tilde{\Phi} _1^{\yy} (\xx)   ) = \Phi _1 ^{\yy} (\exp (\xx)) = \exp (\xx) \exp (\yy)  ,      \]
where the first equality follows from  \eqref{eq:exp-related}. Setting $\zz : = \tilde{\Phi}_1 ^{\yy}(\xx)$ finishes the proof.
\end{proof}

The second main result of this section establishes that canonical coordinates of the second kind are  locally surjective onto a ball of definite radius depending only on the absolute value of the structure constants. This lemma will be key in establishing \eqref{eq:h-ineq-2}.

\begin{Lem}\label{lem:lie-ift}
    For each $n \in \mathbb{N}$, $C\geq 1$, $\varepsilon > 0 $, there is $\delta > 0 $ such that the following holds. Let $\mathfrak{g}$ be a finite-dimensional Lie algebra equipped with an inner product, $\vv_1, \ldots , \vv_n \in \mathfrak{g}$ an orthonormal basis, and assume 
    \[
          \Vert [\xx,\yy] \Vert \leq C \Vert \xx \Vert \Vert \yy \Vert   
    \]   
    for all $\xx ,\yy \in \mathfrak{g}$. Then for each $\ww \in \mathfrak{g}$ with $\Vert \ww \Vert < \delta $, there are $t_1, \ldots , t_n \in ( - \varepsilon , \varepsilon ) $ with 
    \[   \exp (\ww )  = \exp (t_1 \vv _1) \cdots \exp (t_n \vv _n )  \in G    \]
    for any Lie group $G$ with Lie algebra $\mathfrak{g}$. In fact, one can take $\delta  : = \min \{ \tfrac{\varepsilon}{4}, \tfrac{1}{64 n^2 C } \} $. 
\end{Lem}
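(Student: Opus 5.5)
The plan is to prove the lemma as a quantitative inverse function theorem for the ``second kind'' coordinate map, working entirely inside $\mathfrak{g}$ via the local diffeomorphism $\exp : B_{1/2C}^{\mathfrak{g}}(0) \to G$ established in the proof of Lemma \ref{lem:bch-quant}.

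\textbf{Step 1: defining the coordinate map.} Set $r := \min\{ \varepsilon, \tfrac{1}{16 n^2 C}\}$ and consider the cube $Q := (-r,r)^{n}$, which satisfies $\sum_j |t_j| < \tfrac{1}{16 n C}$ on $Q$. Define $F : Q \to \mathfrak{g}$ by requiring $F(0)=0$ and
\[ \exp (F(t)) = \exp(t_1 \vv_1) \cdots \exp (t_n \vv_n), \]
with $F$ continuous and taking values in $B_{1/4C}^{\mathfrak{g}}(0)$. To construct $F$, vary the coordinates one at a time, starting from $0$ and turning on $t_1$, then $t_2$, and so on. Along each segment, $F$ solves the ODE
\[ \partial_{t_k} F = \Psi_{F}^{-1}\bigl(\Ad_{(\exp(t_{k+1}\vv_{k+1})\cdots \exp(t_n \vv_n))^{-1}} \vv_k\bigr), \]
which is the identity obtained by differentiating both sides and using \eqref{eq:psi-x-definition}. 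This is the same flow argument as in Lemma \ref{lem:bch-quant}. By \eqref{eq:psi-inverse-good}, Lemma \ref{lem:exponentials} (applied to $\Ad_{\exp(-t\vv)} = e^{-t\,\ad_{\vv}}$ with $\Vert \ad_{\vv_j}\Vert \leq C$), and $S := C\sum_j |t_j| < \tfrac1{16}$, the right-hand side has norm at most $2$. A continuity (bootstrap) argument then shows
\[ \Vert F(t) \Vert \leq 2\sum_j |t_j| < \tfrac{1}{8C}, \]
so $F$ never leaves the region where \eqref{eq:psi-inverse-good} applies. The formula for $\partial_{t_k}F$ holds at every point of $Q$, not only along the segments used in the construction.

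\textbf{Step 2: estimating $dF - \Id$.} In the orthonormal basis, the $k$-th column of $dF - \Id$ is $\partial_{t_k}F - \vv_k$. Splitting this difference and using the bounds from Step 1 gives
\[ \Vert \partial_{t_k}F - \vv_k \Vert \leq \Vert \Psi_F^{-1} - \Id\Vert\,\Vert \Ad(\cdots)\Vert + \Vert \Ad(\cdots) - \Id \Vert \leq 2C\Vert F\Vert (1+2Se^S) + 2Se^S \lesssim C\sum_j |t_j| < \tfrac{c}{n}. \]
Since every column error is $O(1/n)$, the operator norm satisfies $\Vert dF - \Id\Vert \leq \sqrt{n}\,\max_k(\text{column error})$. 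After adjusting constants this is at most $\tfrac12$ on all of $Q$, and this is exactly where the factor $n^2$ in the definition of $\delta$ is spent.

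\textbf{Step 3: quantitative inverse function theorem.} A $C^1$ map on the Euclidean ball $B_r(0)\subset Q$ with $F(0)=0$ and $\Vert dF - \Id \Vert \leq \tfrac12$ has image containing $B_{r/2}(0)$. One proves this by showing that $t \mapsto t - (F(t) - \ww)$ is a contraction of $\overline{B_{r'}(0)}$ into itself for $r'<r$ and $\Vert \ww\Vert < r'/2$, then applying Banach's fixed point theorem. Hence every $\ww$ with $\Vert \ww \Vert < \delta \leq r/4$ equals $F(t)$ for some $t \in Q$, so $|t_j|<r\le\varepsilon$. Exponentiating yields
\[ \exp(\ww) = \exp(t_1 \vv_1)\cdots\exp(t_n\vv_n). \]
Every step uses only the bracket on $\mathfrak{g}$ and the formula for $d\exp$, so the conclusion holds in any Lie group $G$ with Lie algebra $\mathfrak{g}$.

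The main obstacle is Step 1: ensuring $F$ is globally well defined and stays in $B_{1/4C}^{\mathfrak{g}}(0)$ on the whole cube, with $\Vert F \Vert$ controlled linearly in $\sum_j |t_j|$. This control is what allows the bound $\Vert \Psi_F^{-1} - \Id\Vert \leq 2C \Vert F\Vert$ to be used, and I expect to handle it by a careful continuity argument along the coordinate segments. The bookkeeping of constants so that $\delta = \min\{\tfrac{\varepsilon}{4}, \tfrac{1}{64 n^2 C}\}$ comes out exactly is routine once this is in place.
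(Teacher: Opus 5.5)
Your proposal is correct and is essentially the paper's proof. Your $F$ is exactly the paper's map $\Theta(t_1,\ldots,t_n)=\tilde{\Phi}^n_{t_n}\circ\cdots\circ\tilde{\Phi}^1_{t_1}(0)$, your formula for $\partial_{t_k}F$ is the paper's $\Psi_{\Theta}^{-1}\bigl(e^{-t_n\ad_{\vv_n}}\cdots e^{-t_{k+1}\ad_{\vv_{k+1}}}(\vv_k)\bigr)$, and your bounds come from \eqref{eq:psi-inverse-good} and Lemma \ref{lem:exponentials} in the same way. The only real difference is the final step: you use a contraction-mapping inverse function theorem with the operator-norm bound $\Vert dF-\Id\Vert\le\tfrac12$, whereas the paper uses its minimization-based Theorem \ref{thm:inverse-function-theorem} with column-wise bounds $\tfrac{1}{2n}$; both close the argument. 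Your route loses only a factor $\sqrt{n}$ when passing from columns to the operator norm, so the $n^2$ in $\delta$ is more than you actually need, not exactly what you spend.
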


Near the end of the proof of Lemma \ref{lem:lie-ift}, we will need the following quantitative version of the classical Inverse Function Theorem.

\begin{Thm}[Inverse Function Theorem]\label{thm:inverse-function-theorem}
    Let $U \subset \mathbb{R}^n $ be an open set and $\Theta : U \to \mathbb{R}^n$ a differentiable function with  $\Theta ( 0 ) = 0 $ and 
    \begin{equation}\label{eq:hypothesis-good-partials}
          \Vert \tfrac{\partial }{\partial t_j} \Theta (t_1, \ldots , t_n) - \text{e} _j  \Vert \leq \tfrac{1}{2n}        
    \end{equation}
    for all $j \in \{ 1, \ldots , n \}$ and  $ (t_1, \ldots , t_n) \in  U$. Assume $r > 0 $ is such that $[-r,r ] ^{\times n} \subset U$. Then for each $x\in B_{r/4}(0)$ there are unique  $t_1, \ldots , t_n \in (-r, r)$ such that 
    \[      \Theta ( t_1, \ldots , t_n ) = x.                       \]
\end{Thm}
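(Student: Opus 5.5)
We prove the quantitative inverse function theorem (Theorem \ref{thm:inverse-function-theorem}) by a contraction mapping argument on the cube $Q : = [-r,r]^{\times n}$, using the sup norm $\Vert t \Vert_\infty = \max_j |t_j|$ on the domain. The plan has three steps: rewrite the hypothesis \eqref{eq:hypothesis-good-partials} as a bound on $D\Theta - \Id$, set up a fixed-point map for a given target $x$, and then check that this map sends $Q$ into itself and is a contraction.

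\textbf{Step 1: the derivative is close to the identity.} Write $D\Theta(t)$ for the Jacobian, whose $j$-th column is $\tfrac{\partial}{\partial t_j}\Theta(t)$. For any $s \in \mathbb{R}^n$ we have
\[ \Vert (D\Theta(t) - \Id)s \Vert \leq \sum_j |s_j| \, \Vert \tfrac{\partial}{\partial t_j}\Theta(t) - \text{e}_j \Vert \leq n \cdot \tfrac{1}{2n} \Vert s\Vert_\infty = \tfrac12 \Vert s \Vert_\infty , \]
where the norm on the left is the Euclidean norm used in the hypothesis. Since $\Vert\cdot\Vert_\infty \leq \Vert \cdot \Vert$, it follows that $\Vert (D\Theta(t)-\Id)s\Vert_\infty \leq \tfrac12 \Vert s\Vert_\infty$ for every $t \in U$.

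\textbf{Step 2: the fixed-point map.} Fix $x \in B_{r/4}(0)$ and define $T(t) := t - \Theta(t) + x$ on $Q$; zeros of $\Theta - x$ are exactly the fixed points of $T$. We estimate $\Vert T(t) - T(t') \Vert_\infty$ using the mean value inequality along the segment from $t'$ to $t$. This segment lies in the convex set $Q \subset U$. Differentiability of $\Theta$ together with the derivative bound of Step 1 is enough for the mean value inequality, so continuity of the derivative is not needed. Applying it to $\mathrm{Id} - \Theta$ gives
\[ \Vert T(t)-T(t')\Vert_\infty \leq \tfrac12 \Vert t-t'\Vert_\infty , \]
so $T$ is a contraction.

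\textbf{Step 3: $T$ maps $Q$ into its interior.} Using $\Theta(0) = 0$, we have $T(0) = x$, so $\Vert T(0) \Vert_\infty \leq \Vert x \Vert < r/4$. Hence for $t \in Q$,
\[ \Vert T(t)\Vert_\infty \leq \Vert T(0)\Vert_\infty + \tfrac12 \Vert t \Vert_\infty < \tfrac r4 + \tfrac r2 < r . \]
Thus $T(Q) \subset (-r,r)^{\times n}$. Since $Q$ is complete, Banach's fixed-point theorem gives a unique fixed point $t^* \in Q$, and $t^*$ automatically lies in $(-r,r)^{\times n}$. This gives existence.

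For uniqueness in $(-r,r)^{\times n}$, suppose $\Theta(t) = \Theta(t') = x$ with $t, t' \in Q$. Then
\[ \Vert t - t' \Vert_\infty = \Vert T(t) - T(t') \Vert_\infty \leq \tfrac12 \Vert t-t'\Vert_\infty , \]
which forces $t = t'$.

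The only delicate point is the norm bookkeeping between Euclidean and sup norms, which is where the factor $\tfrac{1}{2n}$ in \eqref{eq:hypothesis-good-partials} gets used; the rest is routine.
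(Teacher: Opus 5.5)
Your argument is correct, but it takes a different route from the paper. The paper argues variationally. It shows $\Vert\Theta(z)\Vert\geq r/2$ on $\partial([-r,r]^{\times n})$, so for $\Vert x\Vert<r/4$ the function $\tfrac12\Vert\Theta(y)-x\Vert^2$ attains its minimum over the cube at an interior point. There the first-order condition and the linear independence of the partials $\tfrac{\partial}{\partial t_j}\Theta(t)$ force $\Theta(t)=x$. Uniqueness is handled separately: the paper applies Rolle's theorem to each component $\Theta_j$ along the segment from $t'$ to $t$, and uses that the gradients $\nabla\Theta_j(y_j)$, taken at different points $y_j$, are still linearly independent.

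You instead put everything into the single estimate $\Vert(\Id-D\Theta(t))s\Vert_\infty\leq\tfrac12\Vert s\Vert_\infty$, combined with the mean value inequality. Existence and uniqueness then come from the same contraction bound, with no boundary analysis and no ``mixed-point Jacobian'' issue. This gives a little more than the statement:
\begin{itemize}
\item uniqueness on the closed cube;
\item a geometrically convergent iteration for the solution;
\item the bound $\Vert t_x-t_{x'}\Vert_\infty\leq 2\Vert x-x'\Vert_\infty$;
\item from your Step 3, solvability for every $x$ with $\Vert x\Vert_\infty<r/2$, a strictly larger set than $B_{r/4}(0)$.
\end{itemize}

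The paper's approach, for its part, only invokes one-variable scalar facts (compactness, Fermat's rule, Rolle's theorem). Its boundary estimate, though, implicitly relies on the same kind of mean-value bound you use. In the application, Lemma \ref{lem:lie-ift}, only the $B_{r/4}(0)$ conclusion is needed, so either argument suffices.
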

\begin{proof}
    Using \eqref{eq:hypothesis-good-partials}, it is not hard to show that $\Vert \Theta (z) \Vert \geq r / 2 $ for any $z \in \partial ( [-r,r] ^{\times n})$. Then for any $x \in B_{r/4}(0)$, the function $f : [-r,r]^n \to \mathbb{R}$ given by $f(y) : = \tfrac{1}{2} \Vert \Theta (y) - x \Vert ^2 $ is differentiable and attains its minimum at a point  $t = (t_1, \ldots , t_n)$ in the interior $(-r,r) ^{\times n}$. By the chain rule, 
    \[      \tfrac{\partial f }{\partial t_j } (t) =   (  \Theta (t)  - x  ) \cdot \tfrac{\partial }{\partial t_j} \Theta (t)  = 0       \]
    for each $j \in \{ 1, \ldots , n \} $. This implies that either $ \Theta (t) = x  $ or the set of partial derivatives $\{ \tfrac{\partial }{\partial t_j} \Theta (t)  \} _{j = 1 } ^n $ is linearly dependent. However, by \eqref{eq:hypothesis-good-partials} the latter possibility is ruled out.

    To show  uniqueness of $t$, assume that for a distinct $t' \in (-r,r)^{\times n} $ one also has $\Theta (t') = x$. By Rolle's Theorem there would be points $y_1, \ldots , y_n  \in  U $ in the segment between $t$ and $t'$ such that
    \[   (t- t') \cdot \left[ \nabla  \Theta _j  (y_j) \right] = 0    \]
    for each $j \in \{ 1, \ldots, n \} $, where $\Theta _j$ denotes the $j$-th component function of $\Theta$. This would contradict the linear independence of the set $\{ \nabla  \Theta _j (y_j)  \} _{j = 1} ^n$ provided by \eqref{eq:hypothesis-good-partials}. 
\end{proof}

\begin{proof}[Proof of Lemma \ref{lem:lie-ift}] We build upon the proof of  Lemma \ref{lem:bch-quant}. Recall that the exponential map is a local diffeomorphism in $B : = B_{1/2C}^{\mathfrak{g}}(0)$. Let $V_1, \ldots , V_n \in \mathfrak{X}(G) $ be the left-invariant vector fields defined by 
\[    V_j (g) : = (L_g)_{\ast} (\vv _ j ) ,               \] 
and define their lifts $\tilde{V}_1, \ldots , \tilde{V}_n \in \mathfrak{X}(B)$ by the formula  
\[ 
    \tilde{V} _j (\xx) : = \Psi_{\xx} ^{-1} (\vv _j ) = [ d_{\xx} \exp ] ^{-1} V_j (\exp (\xx))  ,
\] 
where $\Psi _{\xx} :  T_{\xx}\mathfrak{g} \to \mathfrak{g}$ is given by \eqref{eq:psi-x-definition}.  

If $ \Vert \xx \Vert \leq \frac{\eta }{2C}$ for some  $ \eta \in ( 0 , 1 ) $, by \eqref{eq:psi-inverse-good} we have
\begin{equation}\label{eq:v-tilde-and-v}
      \Vert    \tilde{V}_j (\xx) - \vv _j \Vert \leq  \eta .  
\end{equation}
In particular,
\begin{equation}\label{eq:v-tilde-bound}
\Vert \tilde{V}_j (\xx) \Vert \leq 2
\end{equation} for all $\xx \in B$.  We denote by  $ \Phi ^j : G  \times \mathbb{R}  \to G $ and $\tilde{\Phi} ^j : B_{1/4C}^{\mathfrak{g}}(0) \times (- \frac{1}{8C}, \frac{1}{8C}) \to B$  the flows of $V_j$ and $\tilde{V}_j$, respectively. Note that 
\begin{equation}\label{eq:phi-formula}
\Phi^j_t (g)  = g \exp (t\vv_j)     
\end{equation}
for all $g \in G$ and $t \in \mathbb{R}$. Since the vector fields $V_j$ and $\tilde{V}_j$ are related by $\exp$, for $\yy \in B_{1/ 4C}^{\mathfrak{g}}(0) $ and $t \in  (- \frac{1}{8C}, \frac{1}{8C})$ we have
\begin{equation}\label{eq:phi-exp-related}
 \exp ( \tilde{\Phi} ^ j _t (\yy)) =  \Phi ^j _t ( \exp (\yy) )    .
\end{equation}
 By taking the derivative,  we obtain
\begin{equation*}\label{eq:phi-tilde-exp}
    [ d_{\tilde{\Phi}^j _t (\yy) }  \exp ] \circ (\tilde{\Phi}_t^j)_{\ast}   =  ( \Phi ^j _t )_{\ast} \circ  [d_{\yy}\exp ] ,
\end{equation*}
and rearranging,
\begin{equation*}\label{eq:phi-tilde-exp}
    (\tilde{\Phi}_t^j)_{\ast} \circ [d_{\yy}\exp ] ^{-1} = [ d_{\tilde{\Phi}^j _t (\yy) }  \exp ]^{-1} \circ ( \Phi ^j _t )_{\ast}  .
\end{equation*}
Using this for $\yy  = \tilde{\Phi} _{-t}^j (\xx)$, we compute
\begin{equation}\label{eq:flow-of-left-invariant}
\begin{split}
  (  \tilde{\Phi} ^j _t )_{\ast} \circ \Psi ^{-1}_{\tilde{\Phi}^j_{-t}(\xx)} & = (\tilde{\Phi}^j_t) _{\ast} \circ [ d_{\tilde{\Phi}^j _{-t}(\xx)}\exp ]^{-1} \circ (L_{ \exp (\tilde{\Phi}^j_{-t} (\xx)) } )_{\ast} \\
    & = [  d_{\xx} \exp ]^{-1} \circ (\Phi ^j _t)_{\ast} \circ (L_{\exp (\tilde{\Phi}^j_{-t} (\xx) ) } )_{\ast} \\
    & = [  d_{\xx} \exp ]^{-1} \circ  (  R_{\exp ( t \vv _j )} )_{ \ast}  \circ (L_{\exp ( \xx ) } )_{\ast} \circ  (  L_{\exp (- t \vv _j )} )_{ \ast}   \\
    & = [  d_{\xx} \exp ]^{-1}  \circ (L_{\exp ( \xx ) } )_{\ast} \circ  (  L_{\exp (- t \vv _j )} )_{ \ast} \circ  (  R_{\exp ( t \vv _j )} )_{ \ast}   \\
    & = \Psi _{\xx}^{-1} \circ \Ad _{\exp (-t\vv_j)} \\
    & = \Psi _{\xx}^{-1} \circ  e ^{ - t \cdot \ad _{\vv_j} } ,  
\end{split}
\end{equation}
where the third line follows from  \eqref{eq:phi-formula} and \eqref{eq:phi-exp-related}. Now we define 
\[  \Theta  : ( - \tfrac{1}{8nC} , \tfrac{1}{8nC} ) ^{\times n} \to  B   \]
by 
\[     \Theta (t_1, \ldots , t_n ) : = \tilde{\Phi} ^n _{t_n} \circ \cdots \circ \tilde{\Phi} ^1 _{t_1} (0) .        \]
From \eqref{eq:v-tilde-bound}, we have
\begin{equation}\label{eq:theta-bound}
    \Vert \Theta ( t_ 1, \ldots , t_n ) \Vert \leq 2 \sum_{i = 1} ^n \vert t_i \vert 
\end{equation}
for all $(t_1, \ldots , t_n ) \in  ( - \tfrac{1}{8nC} , \tfrac{1}{8nC} ) ^{\times n} $. We now compute the partial derivatives of $\Theta$: 
\begin{align*}
    \tfrac{\partial}{\partial t_j} \Theta (t_1, \ldots , t_n) & = \tfrac{\partial}{\partial t_j}   ( \tilde{ \Phi } ^n _{t_n} \circ \cdots \circ \tilde{ \Phi } ^1 _{t_1} (0) ) \\
    & =  (\tilde{\Phi}_{t_n} ^n)_{\ast} \cdots (\tilde{\Phi}_{t_{j+1}} ^{j+1})_{\ast} \left[   \tilde{V}_j (  \tilde{\Phi} ^j_{t_j} \circ \cdots \circ \tilde{\Phi}^1_{t_1} (0)  )\right] . 
\end{align*}
If we set $ \xx  : = \tilde{\Phi}^j_{t_j} \circ \cdots \circ \tilde{\Phi}^1_{t_1} (0) , $ then by repeated applications of \eqref{eq:flow-of-left-invariant} we get
\begin{align*}
     \tfrac{\partial}{\partial t_j} \Theta (t_1, \ldots , t_n)   & =   (\tilde{\Phi}_{t_n} ^n)_{\ast} \cdots (\tilde{\Phi}_{t_{j+1}} ^{j+1})_{\ast}  \Psi_{ \xx } ^{-1}  ( \vv _j )    \\
  &  = \Psi _{  \tilde{\Phi }^n _{t_n} \cdots \tilde{\Phi}^{j+1} _{t_{j+1}} (\xx) } ^{-1} \left(  e ^{-t_n\cdot  \ad_{\vv_n} } \cdots e ^{-t_{j+1} \cdot  \ad_{\vv_{j+1}} } ( \vv_j ) \right) \\
  & = \Psi _{ \Theta (t_1, \ldots , t_n ) } ^{-1} \left(  e ^{-t_n\cdot  \ad_{\vv_n} } \cdots e ^{-t_{j+1} \cdot  \ad_{\vv_{j+1}} } ( \vv_j ) \right)  
\end{align*}
If for some $\alpha \in ( 0 , 1 ) $ we have  $\vert t_i \vert \leq \tfrac{\alpha }{ 8 n C  }   $  for each $i$, then
\begin{equation}\label{eq:good-partials-1}
    \begin{split}
    \Vert \tfrac{\partial}{\partial t_j} \Theta & (t_1, \ldots , t_n) - \tilde{V}_j ( \Theta (t_1, \ldots , t_n)  ) \Vert \\ 
    & =  \Vert     \Psi_{\Theta (t_1, \ldots, t_n ) } ^{-1} \left(  e ^{-t_n \cdot \ad _{\vv_n}}  \cdots e ^{- t_{j + 1} \cdot \ad _{\vv _ {j+1}}}  (\vv _j) \right) -   \Psi_{\Theta (t_1, \ldots, t_n ) } ^{-1} ( \vv_j )  \Vert  \\
    & \leq \Vert   \Psi ^{-1} _{\Theta (t_1, \ldots , t_n )} \Vert \Vert e ^{-t_n \cdot \ad _{\vv_n}}  \cdots e ^{- t_{j + 1} \cdot \ad _{\vv _ {j+1}}} - \Id _{\mathfrak{g}}  \Vert \\
    & \leq  \Big[ 1 +  2C \Vert \Theta (t_1, \ldots , t_n ) \Vert   \Big] \Big[  2 \tfrac{\alpha}{8}  e ^{\frac{\alpha }{8}}    \Big]   \\
    & \leq \tfrac{\alpha}{2} ,
    \end{split}
\end{equation}
where in the fourth line we used \eqref{eq:exponential-bound} and \eqref{eq:psi-inverse-good}, and in the last line we used \eqref{eq:theta-bound}.  Under these conditions, from \eqref{eq:v-tilde-and-v} and \eqref{eq:theta-bound} we also have
\begin{equation}\label{eq:good-partials-2}
\Vert \tilde{V}_j ( \Theta (t_1, \ldots ,  t_n)  ) - \vv _j  \Vert   \leq \tfrac{\alpha}{2} .
\end{equation}
Therefore, combining \eqref{eq:good-partials-1} and \eqref{eq:good-partials-2}, we get 
\[    \Vert \tfrac{\partial}{\partial t_j} \Theta  (t_1, \ldots , t_n)  - \vv _j  \Vert    \leq \alpha .                                      \]
If we set $r \leq \min \{ \varepsilon , \frac{1}{16 n ^2 C } \} $, for any $t_1, \ldots , t_n \in [ - r, r ] $ we get  
\[      \Vert \tfrac{\partial}{\partial t_j} \Theta  (t_1, \ldots , t_n)  - \vv _j  \Vert    \leq \tfrac{1}{2n}   ,    \]
so by Theorem \ref{thm:inverse-function-theorem}, for any $\ww \in B_{r/4} (0)$ there are $t_1, \ldots , t_n \in ( - r,r ) $ with 
\[      \ww = \Theta (t_1, \ldots , t_n ) .     \]
By multiple applications of \eqref{eq:phi-exp-related}, we conclude
\begin{align*}
    \exp (\ww) & = \exp ( \Theta ( t_1, \ldots , t_n ) ) \\
    & = \exp (  \tilde{\Phi } ^n _{t_n} \circ \cdots \circ \tilde{\Phi}^1 _{t_1} (0)   ) \\
    & = \Phi ^n_{t_n} \circ \cdots \circ \Phi ^1_{t_1} (\exp (0)) \\
    & = \exp (t_1 \vv_1) \cdots \exp (t_n \vv _n)
\end{align*}   
\end{proof}

\section{Local good approximations}\label{sec:local-good}

As Example \ref{ex:bad-approximation} shows, good approximations do not behave well with respect to quotients. To get around this issue, we adapt the notion of good approximations  to maps between local groups.

\begin{Def}[Local good approximations] Let $G_i$ be a sequence of locally compact Hausdorff local groups and $G$ a locally compact Hausdorff local group. We say a sequence of functions $\phi _ i : G_i \to G$ consists of \emph{local good approximations} if there are open, pre-compact, symmetric, multiplicative sets $A_i \subset G_i$, $A \subset G$ such that:
    \begin{enumerate}[label=\Roman*${}_{\text{loc}}$]
        \item (Almost surjectivity) For all $U \subset A $ open nonempty, one has $\phi_i (A_i) \cap U \neq \emptyset$ for $i$ large enough.\label{item:lga-1}
        \item (No expansion) For all $V \subset G$ open with $\overline{A} \subset V$, one has $\phi_i (A_i) \subset V$ for $i$ large enough.\label{item:lga-2}
        \item (Almost homomorphism) For each identity neighborhood $U \subset G$, there is $i_0 \in \mathbb{N}$ such that if $i \geq i_0 $ and   $g, h \in A_i ^8 $, then $ [ \phi_i (gh)^{-1} \phi _i (g) \phi_i (h) ]  \in U $. \label{item:lga-3}
        \item (No compression) For each compact $K\subset A$, one has 
        \[ \phi_i ^{-1} (K) \cap A_i^8 \subset A_i  \] 
        for $i$ large enough. \label{item:lga-4} 
        \item (Almost continuity) For each identity neighborhood $U \subset G$, there is a sequence of identity neighborhoods $U_i \subset G_i$ with $\phi_i (U_i) \subset U $ for $i$ large enough. \label{item:lga-5}
    \end{enumerate}
    If properties (I${}_{\text{loc}}$--V${}_{\text{loc}}$) hold, we call the sets $A_i$ and $ A$ \emph{regular neighborhoods} with respect to the approximations $\phi_i$. 

    \emph{Small subgroups} and the \emph{NSS property} in the context of local good approximations are defined in exactly the same way as they were defined for good approximations (see Definition \ref{def:ss2}). 
\end{Def}

The notion of good approximations was originally inspired by Definition \ref{def:good-model} below.  Good models of ultraproducts of discrete multiplicative sets were introduced in \cite{hrushovski, breuillard-green-tao}, and extended in \cite{carolino} to locally compact groups.  Recall that throughout this paper, we are working with a fixed non-principal ultrafilter $\alpha$. 

\begin{Def}[Good model]\label{def:good-model}
    Let $G_i$ be a sequence of local groups, $A_i \subset G_i$ a sequence of symmetric, open, pre-compact multiplicative sets, and $L$ a local group.  We say a morphism  $\pi : \prod _{i \to \alpha } A_i ^8 \to L$ is a \emph{good model} if the following holds:
    \begin{enumerate}[label=\Roman*${}_{\text{model}}$]
        \item (Thick image) There is an open identity neighborhood $U_0 \subset L $ such that $U_0 \subset \pi (\prod _{i \to \alpha } A_i )$ and  $\pi ^{-1} (U_0) \subset \prod _{i \to \alpha } A_i $.\label{item:gm-1}
        \item (Compact image) $\pi (  \prod_{i \to \alpha } A_i  )$ is contained in a compact set.\label{item:gm-2}
        \item (Approximation by internal open sets) For any $K \subset U \subset U_0$ with $K$ compact and $U$ open,  there is a sequence of open sets $A_i ' \subset A_i$ such that 
        \[    \pi ^{-1} (K) \subset \prod_{i \to \alpha } A_i ' \subset \pi ^{-1} (U) .\label{item:gm-3}                  \]
    \end{enumerate}
\end{Def}

It is immediate from the definition that a sequence of good approximations is also a sequence of local good approximations with respect to the same regular neighborhoods. In the same direction, the following proposition shows that ultralimits of local good approximations are good models. 

\begin{Pro}\label{pro:lga-to-gm}
    Let $\phi_i : G_i \to G$ be a sequence of local good approximations with regular neighborhoods $A_i \subset G_i$ and $A \subset G$. Define $\pi : \prod_{i \to \alpha } A_i^8 \to G$ as 
    \begin{equation}\label{eq:gm-definition}
     \pi (  (  a_i )   ) : = \lim_{i \to \alpha } \phi_i (a_i) .     
    \end{equation}
    If $G$ is metrizable, then $\pi$ is a good model. 
\end{Pro}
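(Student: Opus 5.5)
We need to show that $\pi$ is a well-defined morphism and that it satisfies (I$_{\text{model}}$)--(III$_{\text{model}}$).

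\textbf{Well-definedness.} Take $(a_i) \in \prod_{i\to\alpha} A_i^8$. By \eqref{item:lga-2}, $\phi_i(A_i)$ eventually lies in a fixed compact neighbourhood $V$ of $\overline{A}$. Using \eqref{item:lga-3} to write $\phi_i(a_1\cdots a_8)$ approximately as $\phi_i(a_1)\cdots\phi_i(a_8)$, it follows that $\phi_i(A_i^8)$ eventually lies in a neighbourhood of the compact set $V^8$. Hence the sequence $\phi_i(a_i)$ lives in a compact metrizable set, and its ultralimit exists and is unique. Changing $(a_i)$ on an $\alpha$-null set does not change the ultralimit.

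\textbf{Morphism.} Suppose $g = (g_i)$ and $h = (h_i)$ have product $gh = (g_ih_i)$ well defined in $\prod A_i^8$. Then $\alpha$-almost all $g_i, h_i, g_ih_i$ lie in $A_i^8$. By \eqref{item:lga-3}, for every identity neighbourhood $U$ we have $\phi_i(g_ih_i)^{-1}\phi_i(g_i)\phi_i(h_i)\in U$ for $\alpha$-almost all $i$. Since multiplication in the local group $G$ is continuous, passing to the ultralimit gives $\pi(gh)=\pi(g)\pi(h)$. The products are defined here because all elements stay in a compact set where multiplication is defined: $A$ is multiplicative, so $(A\cup A^{-1})^{200}\subset G$, and a slight enlargement of this compact set still has defined products.

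\textbf{Conditions (I$_{\text{model}}$) and (II$_{\text{model}}$).} Condition (II$_{\text{model}}$) is immediate from \eqref{item:lga-2}, since $\pi(\prod A_i)\subset\overline{A}$. For (I$_{\text{model}}$), take $U_0$ to be an open identity neighbourhood with compact closure $\overline{U_0}\subset A$.
\begin{itemize}
\item \emph{Surjectivity onto $U_0$.} Given $g\in U_0$, choose for each $k$ the ball $B_{1/k}(g)\subset A$. By \eqref{item:lga-1} there is $i_k$ such that $\phi_i(A_i)\cap B_{1/k}(g)\neq\emptyset$ for $i\ge i_k$; these sets are cofinite, hence $\alpha$-large. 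Choosing $a_i$ in the appropriate intersection gives $\phi_i(a_i)\to g$, so $\pi((a_i))=g$.
\item \emph{Preimage inside $\prod A_i$.} If $\pi((a_i))\in U_0$ with $a_i\in A_i^8$, then for $\alpha$-almost all $i$ we have $\phi_i(a_i)\in K:=\overline{U_0}$. Then \eqref{item:lga-4} gives $a_i\in A_i$ for $\alpha$-almost all $i$.
\end{itemize}

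\textbf{Condition (III$_{\text{model}}$).} This is the main obstacle, and I would prove the analogue of Lemma \ref{lem:approximation-by-internal-sets} for local good approximations. Given $K\subset U\subset U_0$, choose an open $U'$ with $K\subset U'\subset\overline{U'}\subset U$ and set
\[ A_i' := \operatorname{int}\big(A_i\cap\phi_i^{-1}(U')\big), \]
enlarged using \eqref{item:lga-5}. Concretely, pick an identity neighbourhood $W$ with $KW^2\subset U'$, let $W_i$ be given by \eqref{item:lga-5} with $\phi_i(W_i)\subset W$, and set
\[ A_i' := \big(\phi_i^{-1}(KW)\cap A_i\big)\,W_i\cap A_i. \]
This set is open, being a union of translates of the open set $W_i$ intersected with the open set $A_i$.
\begin{itemize}
\item \emph{$\pi^{-1}(K)\subset\prod A_i'$.} If $\pi((a_i))\in K$, then $\phi_i(a_i)\in KW$ for $\alpha$-almost all $i$, and $a_i\in A_i$ by (I$_{\text{model}}$). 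Since $e\in W_i$, $a_i\in A_i'$.
\item \emph{$\prod A_i'\subset\pi^{-1}(U)$.} By \eqref{item:lga-3}, elements $bw$ with $\phi_i(b)\in KW$ and $w\in W_i$ satisfy $\phi_i(bw)\in KW^2U''$, for an arbitrarily small identity neighbourhood $U''$, once $i$ is large. Taking the ultralimit puts the image in $\overline{KW^2}\subset U$, after shrinking $W$ if needed.
\end{itemize}

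The delicate point throughout is ensuring that all products stay inside $A_i^8$ and inside the domain of multiplication of $G$. This is exactly what the multiplicativity assumption and the choice of $U_0$ inside $A$ are designed to guarantee.
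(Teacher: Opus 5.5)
Your proposal is correct and follows the paper's proof essentially step by step. You get well-definedness and the morphism property from (II$_{\text{loc}}$) and (III$_{\text{loc}}$), and (I$_{\text{model}}$) from a diagonal argument with (I$_{\text{loc}}$) together with (IV$_{\text{loc}}$) applied to $\overline{U_0}\subset A$. For (III$_{\text{model}}$) you use internal open sets sandwiched between $\phi_i^{-1}(K)\cap A_i$ and $\phi_i^{-1}(U)$; the only real difference is that you build them explicitly as $(\phi_i^{-1}(KW)\cap A_i)W_i\cap A_i$ via (III$_{\text{loc}}$) and (V$_{\text{loc}}$), taking $W_i\subset A_i$ so that (III$_{\text{loc}}$) applies to the products, whereas the paper simply invokes Lemma \ref{lem:approximation-by-internal-sets}, whose proof it notes carries over to local good approximations.
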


\begin{proof}
    By \eqref{item:lga-2}, we have $\phi_i (A_i) \subset A^{2}$ for $i$ large enough. By \eqref{item:lga-3}, this implies $\phi_i (A_i^8) \subset A^{17}$ for $i$ large enough.  Consequently,  the ultralimit \eqref{eq:gm-definition} is well defined, and $\pi ( \prod _{i \to \alpha } A_i^8 ) \subset \overline{A}^{17}$, proving \eqref{item:gm-2}.     Again by \eqref{item:lga-3}, the map $\pi $ is a  morphism.

     Let $U_0 \subset A$ be an open identity neighborhood with $\overline{U_0} \subset A$.     To prove \eqref{item:gm-1}, fix $g \in U_0$ and $U \subset U_0$ a neighborhood of $g$. By \eqref{item:lga-1}, there are $g_i \in A_i$ with $\phi_i (g_i) \in U$ for $i$ large enough. Using a countable neighborhood basis of $g$ and a diagonal argument, we can find a sequence $a_i \in A_i$ with $\phi_i (a_i) \to g$. This implies 
    \[  \pi ((a_i)) = \lim_{i \to \alpha} \phi_i (a_i) = \lim _{i \to \infty} \phi_i (a_i) = g .   \]
    On the other hand, take $(a_i) \in \prod_{i \to \alpha} A_i ^8$ with $\pi ((a_i)) = g \in U_0 $. By definition of ultraconvergence, this implies $\alpha (\{ i \in \mathbb{N} \, \vert \, \phi_i (a_i) \in U_0 \} ) = 1$. Since $\overline{U_0} \subset A $ is compact,  \eqref{item:lga-4} implies $\alpha ( \{ i \in \mathbb{N} \, \vert \, a_i \in A_i  \} ) = 1$. Therefore,  $(a_i) \in \prod_{i \to \alpha} A_i$, proving \eqref{item:gm-1}.

    To prove \eqref{item:gm-3}, fix $K \subset U \subset U_0$ with $K$ compact and $U$ open. Let $K_1 \subset U_1 \subset U$ be such that  $K_1$ is compact,  $U_1$ is open, and 
    \begin{equation}\label{eq:long-chain}
        K \subset \inte (K_1 ) \subset K_1   \subset U_1 \subset \overline{U_1} \subset U .    
    \end{equation}
    By Lemma \ref{lem:approximation-by-internal-sets}, there is a sequence of open subsets $A_i' \subset A_i$ with 
    \begin{equation}\label{eq:approximation-by-internal-1}
        \phi_i ^{-1} (K _1 ) \cap A_i \subset A_i ' \subset \phi_i ^{-1} (U_1)          
    \end{equation}
    for $i$ large enough. We note that while Lemma \ref{lem:approximation-by-internal-sets} was proven for good approximations, the exact same proof works for  local good approximations. 

    On one hand, for any $(a_i) \in  \prod _{i \to \alpha } A_i ' $, by \eqref{eq:long-chain} and \eqref{eq:approximation-by-internal-1} we have 
    \[   \pi ((a_i))  =   \lim_{i \to \alpha} \phi_i (a_i) \in \overline{ U_1 } \subset U .          \]
    On the other hand, take $(b_i) \in \prod _{i \to \alpha } A_i ^8$ with $\pi ((b_i)) = g \in K$. By \eqref{eq:long-chain}, this implies
    \begin{equation}\label{eq:k-1}
        \alpha (\{ i \in \mathbb{N} \, \vert \,  \phi_i (b_i) \in \inte (K_1) \} ) = 1 . 
    \end{equation}
     By \eqref{item:lga-4}, we then have
     \begin{equation}\label{eq:a-1-1}
    \alpha (\{ i \in \mathbb{N} \, \vert \, b_i \in A_i \}  ) = 1.      
     \end{equation}
    Putting together  \eqref{eq:approximation-by-internal-1}, \eqref{eq:k-1}, and \eqref{eq:a-1-1}, we deduce  
    \[   \alpha ( \{ i \in \mathbb{N} \, \vert \, b_i \in A_i ' \}  )  = 1.           \] 
    Therefore, $(b_i) \in \prod _{i \to \alpha} A_i ' $, and we conclude
    \[     \pi ^{-1} (K ) \subset \prod_{i \to \alpha} A_i ' \subset \pi ^{-1} (  U  ) ,                    \]
    finishing the proof of \eqref{item:gm-3}.  
    \end{proof}

The following two theorems extend results from good approximations to local good approximations.

\begin{Thm}\label{thm:nsz-local}
    Let $G_i$ be a sequence of local Lie groups and  $\phi _i : G_i \to G$ a sequence of local good approximations. If $G$ is a local Lie group and  the sequence $G_i$ has the NSS property, then
    \[    \text{dim} (G) \geq \limsup_{i \to \infty} \text{dim} (G_i) .    \] 
\end{Thm}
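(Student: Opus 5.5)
The plan is to reduce Theorem \ref{thm:nsz-local} to the global statement, Theorem \ref{thm:nsz}, by revisiting the proof from \cite{nsz} and checking that it only uses finitely many products of elements of $A_i$ and $A$. This is exactly what the local definition (I${}_{\text{loc}}$--V${}_{\text{loc}}$) with the exponent $8$ provides.

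First, I localize. Fix a Riemannian metric on a neighborhood of $e$ in $G$ and an inner product on $\mathfrak{g}$. As in Proposition \ref{pro:zoom-i}, shrink the regular neighborhoods to $B := \exp(B_r^{\mathfrak{g}}(0))$ and corresponding $B_i \subset A_i$ with $B_i^{100} \subset A_i$ for large $i$. The proof of Proposition \ref{pro:zoom-i} only multiplies boundedly many elements, so it carries over verbatim, as was noted for Lemma \ref{lem:approximation-by-internal-sets}. The Gleason lemmas (Theorem \ref{thm:gleason}) likewise rest on the good-model structure of Proposition \ref{pro:lga-to-gm} together with local group arguments from \cite[Theorem 8.1]{breuillard-green-tao}, which are already phrased for multiplicative sets. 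Hence they hold for $\Vert\cdot\Vert_{B_i}$ on products in $B_i^{10}$. Combined with NSS, Lemma \ref{lem:norm-approximates} then gives genuine norms $\Vert\cdot\Vert_i$ on $\mathfrak{g}_i$ comparable to $\vert\cdot\vert_{B_i}$, for $i$ large.

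Second, I argue by contradiction. Suppose that, along a subsequence, $\dim(G_i) = d > \dim(G) = m$. Use John's ellipsoid (Theorem \ref{thm:john}) to replace $\Vert\cdot\Vert_i$ by inner-product norms, and choose orthonormal bases $\vv^i_1,\ldots,\vv^i_d$ of $\mathfrak{g}_i$. NSS forces the bracket bound $\Vert[\xx,\yy]\Vert_i \leq C\Vert\xx\Vert_i\Vert\yy\Vert_i$ with $C$ independent of $i$; this comes from the third Gleason inequality passed to the Lie algebra by rescaling. By Lemma \ref{lem:lie-ift}, a fixed-size ball $B^{\mathfrak{g}_i}_\delta(0)$ exponentiates into products $\exp(t_1\vv_1^i)\cdots\exp(t_d\vv_d^i)$ with $|t_j|<\varepsilon$. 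Under $\phi_i$, each one-parameter segment $t\mapsto\phi_i(\exp(t\vv^i_j))$, $|t|\leq\varepsilon$, has escape norm control. After passing to a subsequence (Arzel\`a--Ascoli, using V${}_{\text{loc}}$ and III${}_{\text{loc}}$ to get equicontinuity), it converges to a segment of a one-parameter subgroup $\exp(t\vv_j)$ in $G$, with $\vv_j\neq 0$ by the definition of $\vert\cdot\vert_{B_i}$.

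Third, I compare dimensions. The map $(t_1,\ldots,t_d)\mapsto \exp(t_1\vv_1)\cdots\exp(t_d\vv_d)$ goes from $(-\varepsilon,\varepsilon)^d$ into an $m$-dimensional local Lie group with $m<d$, so it is far from injective. I would convert this into a nontrivial small subgroup of $G_i$, contradicting NSS. The cleanest route is a volume/counting argument. On one hand, by Lemma \ref{lem:lie-ift} and the uniform norm comparison, $B_i$ contains a fixed proportion of a $\delta$-ball in $\mathfrak{g}_i$. On the other hand, maximal $\eta$-separated sets (in escape norm) inside $B_i$ have cardinality $\gtrsim \eta^{-d}$. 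These map under $\phi_i$ to sets that must remain separated by IV${}_{\text{loc}}$ and the almost-homomorphism property, which is the part needing care. But the image lies in a compact subset of an $m$-manifold, which holds at most $\lesssim \eta^{-m}$ separated points. Letting $\eta\to0$ slowly with $i$ gives a contradiction.

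I expect the main obstacle to be this separation step: showing that $\Vert g^{-1}h\Vert_{B_i}\geq\eta$ forces $\phi_i(g)^{-1}\phi_i(h)$ to stay a definite distance from $e$ uniformly in $i$. Escape norm is not continuous, so I would argue via escape: if $g^{-1}h$ leaves $B_i$ within $1/\eta$ powers, then by III${}_{\text{loc}}$, applied iteratively on $A_i^8$ (with $B_i^{100}\subset A_i$ covering the needed powers), $\phi_i$ of its powers leaves a neighborhood of $\overline{B}$ by IV${}_{\text{loc}}$. This bounds $\phi_i(g^{-1}h)$ below in the limit escape norm of $G$, which for a local Lie group is comparable to a Riemannian distance.
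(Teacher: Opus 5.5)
You correctly identify the Gleason lemmas (Theorem \ref{thm:gleason}) as the foundation, but your justification for them on the non-discrete local groups $G_i$ is the genuine gap. You say the arguments of \cite[Theorem 8.1]{breuillard-green-tao} are already phrased for multiplicative sets and rest on the good-model structure. That does not settle it, because those proofs average bump functions against a left-invariant measure. In \cite{breuillard-green-tao} the multiplicative sets are finite and the measure is counting measure; the versions in \cite{carolino, nsz} use the Haar measure of a globally defined locally compact group. For a non-discrete local group neither is available, and you give no substitute. Everything downstream in your sketch depends on these inequalities: Lemma \ref{lem:norm-approximates}, the uniform bracket bound, and the comparison of escape norms with $\Vert\cdot\Vert_i$. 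This is exactly the one point the paper's proof addresses. Every other step of the proof of Theorem \ref{thm:nsz} in \cite{nsz} uses only boundedly many products and works verbatim for local good approximations. The Gleason lemmas survive because on a local \emph{Lie} group the Riemannian volume of a left-invariant metric serves as a locally left-invariant Haar measure. With that observation added, your opening reduction (revisit \cite{nsz} and check that only boundedly many products are used) is the paper's proof, and your second and third paragraphs become unnecessary.

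If you prefer to keep your packing argument instead of citing \cite{nsz}, three steps need repair.
\begin{enumerate}
\item Equicontinuity of $t\mapsto\phi_i(\exp(t\vv^i_j))$ does not follow from V${}_{\text{loc}}$ and III${}_{\text{loc}}$. The neighborhoods $U_i$ in V${}_{\text{loc}}$ may shrink with $i$, so nothing guarantees $\exp(t\vv^i_j)\in U_i$ for $|t|$ below an $i$-independent threshold. Use the escape-norm argument of Lemma \ref{lem:small-norm-is-good} instead, which rests on IV${}_{\text{loc}}$.
\item The lower bound $\gtrsim\eta^{-d}$ on $\eta$-separated sets in escape norm needs the reverse inequality $\Vert\exp(\ww)\Vert_{B_i}\geq(1-\varepsilon)\vert\ww\vert_{B_i}$ for small $\ww$. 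This is Lemma \ref{lem:norms-are-equal-if-small}, which uses NSS and Proposition \ref{pro:convergence-of-ops}. You also need a BCH estimate, such as Lemma \ref{lem:bch-quant} with \eqref{eq:enabler}, to compare $\log(\exp(-\xx)\exp(\yy))$ with $\yy-\xx$. The alternative is a volume count, which again needs the measure.
\item Your separation step works, but only for each fixed $\eta$. IV${}_{\text{loc}}$ gives that the escaping power lies outside any fixed compact subset of $B$; it does not give that it leaves a neighborhood of $\overline{B}$, as you wrote. At most $1/\eta$ applications of III${}_{\text{loc}}$ are needed once $i\geq i_0(\eta)$. This yields separation $\gtrsim\eta$ in $G$ with an $\eta$-independent constant. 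So one fixed small $\eta$ already gives the contradiction, and there is no need to let $\eta\to 0$ with $i$.
\end{enumerate}
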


\begin{proof}
    Each step but one in the proof of Theorem \ref{thm:nsz} in \cite{nsz} only uses the local good approximations rather than global good approximations. The only exception is Theorem \ref{thm:gleason}, since its proof uses the Haar measure on the groups $G_i$, whose construction is more delicate for  local groups than for globally defined  groups.  
    
    However, in the setting of local Lie groups, the Haar measure can be easily constructed as the Riemannian volume measure associated to a left-invariant Riemannian metric, so a version of Theorem \ref{thm:gleason} is available for local good approximations, as long as the local groups $G_i$ are Lie. 

    Consequently, the proof of Theorem \ref{thm:nsz} carries over to the setting of local Lie groups. 
\end{proof}

\begin{Thm}\label{thm:bgt-local} 
    Let $Q_i$ be a sequence of discrete local groups and  $\psi _i : Q_i \to Q$ a sequence of local good approximations  with $Q$ a local Lie group. Then the Lie algebra of $Q$ is nilpotent. 
\end{Thm}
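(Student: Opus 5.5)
The plan is to reduce Theorem \ref{thm:bgt-local} to the structure theorem of Breuillard, Green, and Tao for ultraproducts of discrete multiplicative sets. The first step is to pass to the ultralimit. Since $Q$ is a local Lie group, it is metrizable, so Proposition \ref{pro:lga-to-gm} applies. It yields a good model
\[ \pi : \prod_{i \to \alpha} A_i^8 \to Q, \qquad \pi((a_i)) = \lim_{i\to\alpha}\psi_i(a_i), \]
where $A_i \subset Q_i$ and $A \subset Q$ are the regular neighborhoods. Because each $Q_i$ is discrete, the open pre-compact sets $A_i$ are finite. Hence $\prod_{i\to\alpha}A_i$ is an ultraproduct of finite multiplicative sets (local approximate groups), and $\pi$ is a good model of it in the sense of Definition \ref{def:good-model}, with target a local Lie group.

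The second step is to invoke the main structural consequence in \cite{breuillard-green-tao}. Their argument works in the local-group setting; see their Appendix B and also \cite{hrushovski}. It states that whenever an ultraproduct of finite multiplicative sets admits a good model onto a local Lie group $L$, one can find a good model whose target, after possibly shrinking to an identity neighborhood and quotienting by a compact normal sub-local group, is a nilpotent local Lie group. Equivalently, the ultraproduct contains a large nilprogression of bounded step that maps onto a neighborhood of the identity. I would apply this in the form that the image $\pi(\prod_{i\to\alpha} A_i')$ of the nilprogression $\prod_{i\to\alpha}A_i'$ contains an identity neighborhood of $Q$. On this set, $\pi$ is a morphism from a nilpotent local group.

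The third step is to transfer nilpotency to $\mathfrak{q}$, the Lie algebra of $Q$. Since $\pi$ is a morphism, iterated commutators of length $s+1$ of elements of the nilprogression are trivial, where $s$ is the bounded step. Their images are then trivial in $Q$. By the thick image property \eqref{item:gm-1}, every element of a small identity neighborhood $U_0\subset Q$ is such an image. Therefore every $(s+1)$-fold commutator of elements of $U_0$ equals $e$, whenever the products are defined. Taking $g_k=\exp(t_k\vv_k)$ and expanding each commutator to leading order in the $t_k$ (in the spirit of the BCH estimates of Lemma \ref{lem:bch-quant}) gives $[\vv_1,[\vv_2,\ldots,\vv_{s+1}]]=0$ for all $\vv_k\in\mathfrak{q}$. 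So $\mathfrak{q}$ is nilpotent.

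The main obstacle is the second step. There I must check that the local good model produced by Proposition \ref{pro:lga-to-gm} meets the precise hypotheses used in \cite{breuillard-green-tao}, in particular that only $A_i^8$ is needed rather than higher powers. I must also check that the surjectivity onto a neighborhood survives when passing from $A_i$ to the nilprogression. My first attempt would follow the escape-norm route: the Gleason lemmas of Theorem \ref{thm:gleason} hold here because counting measure is a Haar measure on discrete local groups. This yields the NSS-type reduction and then a ``Lie model'' argument showing that commutators of small elements shrink, $\Vert[g,h]\Vert\le C_0\Vert g\Vert\Vert h\Vert$. Iterating this inequality with the discreteness of $Q_i$ should force high commutators to be trivial on a thick set. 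This is essentially the BGT argument, and I would cite it rather than reprove it.
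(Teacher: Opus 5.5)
Your Step 1 is exactly the paper's first step. Proposition \ref{pro:lga-to-gm} turns the $\psi_i$ into a good model $\pi:\prod_{i\to\alpha}A_i^8\to Q$, and discreteness plus pre-compactness makes the $A_i$ finite. The paper then finishes in one line with Proposition \ref{pro:bgt}. That proposition is the Breuillard--Green--Tao result in precisely the form needed: finite symmetric multiplicative sets in local groups, a good model defined on $\prod_{i\to\alpha}A_i^8$, and the conclusion that the Lie algebra of the target is nilpotent. So the obstacle you flag, including the worry about $A_i^8$ versus higher powers, is settled by quoting that statement. Your Steps 2--3 go through the full nilprogression structure theorem and then back down to the Lie algebra, which is a detour.

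As written, that detour does not close.
\begin{enumerate}
\item BGT's structure theorem produces a \emph{coset} nilprogression $HP$, nilpotent only modulo a finite normal subgroup $H$. So $(s+1)$-fold commutators land in $H$, not at $e$. Likewise, your formulation ``after quotienting by a compact normal sub-local group'' (for a possibly different model) only yields nilpotency of $\mathfrak{q}/\mathfrak{k}$, which says nothing about $\mathfrak{q}$. To repair this, first shrink the regular neighborhoods so that $\overline{A}^{O(1)}$ contains no nontrivial subgroup of $Q$. This is possible since $Q$ is a local Lie group, and it forces $\pi(\prod_{i\to\alpha}H_i)=\{e\}$.
\item The thick image property \eqref{item:gm-1} only gives $U_0\subset\pi(\prod_{i\to\alpha}A_i)$. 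It does not say that $U_0$ consists of images of elements of the nilprogression. For that you need the bounded covering $A\subset XP$, the closedness of images of internal sets, and a Baire/finite-union argument showing that $\pi(P^{-1}P)$ contains an identity neighborhood.
\item $\pi$ is only a morphism on $\prod_{i\to\alpha}A_i^8$, while an $(s+1)$-fold nested commutator is a word of length $3\cdot 2^{s}-2$. So the $g_k$ must be drawn from a small internal set whose such products stay in $A_i^8$, using no compression.
\end{enumerate}

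Your escape-norm sketch is also not how BGT's argument runs. Iterating $\Vert[g,h]\Vert\le C_0\Vert g\Vert\Vert h\Vert$ only makes commutators small, not trivial. BGT instead use that an element of minimal escape norm outside the small subgroup is central modulo it, then pass to a local quotient and induct on the dimension of the model. All of this is avoided by citing Proposition \ref{pro:bgt} directly.
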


Theorem \ref{thm:bgt-local} is a direct consequence of Proposition \ref{pro:lga-to-gm} and the following result.

\begin{Pro}\cite{breuillard-green-tao}. \label{pro:bgt}
    Let $G_i$ be a sequence of discrete local groups, $A_i \subset G_i$ a sequence of finite symmetric multiplicative sets, and   $\pi : \prod _{i \to \alpha } A_i ^8 \to L$ a good model. Then the Lie algebra of $L$ is nilpotent. 
\end{Pro}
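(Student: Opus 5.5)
The plan is to argue by induction on $\dim (L)$, following the strategy of \cite{breuillard-green-tao}: locate a one-parameter family of \emph{central} elements in the ultraproduct, quotient it out locally, and use the fact that a central extension of a nilpotent Lie algebra is nilpotent. If $\dim (L) = 0$, the Lie algebra is trivial and there is nothing to prove. For $\dim (L) > 0$, the argument has a preliminary reduction and four steps.

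\textbf{Reduction to NSS.} First I would remove small subgroups. Working with the associated sequence of local approximations $\phi_i$, I would invoke the local analogue of Theorem \ref{thm:largest-small} (the proof in \cite{nsz} is local) to get maximal small subgroups $H_i$. These are finite, since $A_i$ is finite, and normal in the relevant local group. I would replace $A_i$ by its image in $(G_i/H_i)_{W_i}$ via Proposition \ref{pro:local-quotient}. Since $H_i$ is small, $\pi$ factors through this quotient and remains a good model; the new sets are still finite, symmetric and multiplicative. So we may assume the NSS property. The escape norms $\Vert \cdot \Vert_{B_i}$ then satisfy the Gleason inequalities of Theorem \ref{thm:gleason}. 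As in Theorem \ref{thm:nsz-local}, counting measure on the finite sets replaces Haar measure, so no Lie structure on $G_i$ is needed.

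\textbf{Step 1: a central element.} Since $\dim (L) > 0$, the thick image property \eqref{item:gm-1} gives nontrivial elements of $\prod_{i\to\alpha} A_i$ mapping arbitrarily close to $e$ in $L$. Hence NSS forces $\varepsilon_i := \min \{ \Vert g \Vert_{B_i} : g \in B_i \setminus \{e\} \}$ to satisfy $\varepsilon_i \to 0$ along $\alpha$. This minimum exists because $B_i$ is finite. Let $g_i$ attain it. For $h \in B_i^{10}$, the commutator estimate gives
\[ \Vert [g_i , h] \Vert_{B_i} \leq C_0 \Vert g_i \Vert_{B_i} \Vert h \Vert_{B_i} . \]
If $\Vert h \Vert_{B_i}$ is below $1/C_0$ (a fixed identity neighborhood in the limit), this is strictly less than $\varepsilon_i$, so $[g_i,h] = e$. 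Thus $g_i$ commutes with a fixed neighborhood, and hence with all of $B_i^{10}$ once we localize.

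\textbf{Step 2: a central one-parameter subgroup.} Consider the powers $g_i^{m}$ for $|m| \leq t/\varepsilon_i$. Their images under $\pi$ converge to $\exp (t \vv)$ for some nonzero $\vv$ in the Lie algebra $\mathfrak{l}$ of $L$. That the limit is a genuine one-parameter subgroup, and that $\vv$ is nonzero, follows from the escape norm bounds: $\Vert g_i^m \Vert_{B_i} \le C_0 |m| \varepsilon_i$, together with the fact that the powers leave $B_i$ at time about $1/\varepsilon_i$. Centrality of $g_i$ passes to the limit, so $\vv$ is central in $\mathfrak{l}$.

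\textbf{Step 3: descend to the quotient.} Let $P_i := \{ g_i^m : |m| \leq \delta/\varepsilon_i \}$, a central local cyclic progression, and let $P := \exp ((-\delta,\delta)\vv)$ be its limit, a central (hence normal) sub-local group of $L$. Using Proposition \ref{pro:local-quotient}, the model $\pi$ descends to $\bar\pi : \prod_{i\to\alpha} (A_i/P_i)^8 \to (L/P)_W$. I would check that \eqref{item:gm-1}--\eqref{item:gm-3} survive, arguing as in the proof of Proposition \ref{pro:lga-to-gm}. The quotients $A_i/P_i$ are still finite. By Proposition \ref{pro:lie-local}, $(L/P)_W$ is a local Lie group with Lie algebra $\mathfrak{l}/\mathbb{R}\vv$, of dimension $\dim (L) - 1$.

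\textbf{Step 4: conclude.} By induction, $\mathfrak{l}/\mathbb{R}\vv$ is nilpotent. Since $\mathbb{R}\vv$ is central, $\mathfrak{l}$ is a central extension of a nilpotent Lie algebra, and is therefore nilpotent.

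The main obstacle is Step 3. One must verify that the local quotient by the progression $P_i$ is well defined on a uniform neighborhood, which requires $W_i^6 \subset V_i$ with $P_i$ closed under products landing in $V_i$. One must also verify that the thick image and internal-approximation properties hold for the quotient. Both points require controlling when $g_i^m$ exits $B_i$, which is exactly what the escape norm comparison is designed for. The alternative, re-doing the NSS reduction at each stage of the induction, is straightforward.
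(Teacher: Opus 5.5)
Your outline is correct and is essentially the argument of \cite{breuillard-green-tao}, which the paper cites without reproducing: after removing small subgroups, the commutator Gleason estimate forces a non-identity element of minimal escape norm to be central, its powers give a central one-parameter subgroup, and one inducts on $\dim(L)$ through the local quotient by the central progression, using that a central extension of a nilpotent Lie algebra is nilpotent. Two small points: it is NSS that makes $\varepsilon_i>0$, while $\varepsilon_i\to 0$ comes from $\dim(L)>0$ and thick image; and in Step 3 you need $W_i$ much shorter than $P_i$ and a normalizing neighborhood that omits $g_i^k$ for $\delta/\varepsilon_i<|k|\le 2\delta/\varepsilon_i$, which is where the technical work in \cite{breuillard-green-tao} lies.
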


\begin{proof}[Proof of Theorem \ref{thm:bgt-local}]
By Proposition \ref{pro:lga-to-gm}, the  ultralimit of the sequence $\psi _i$ produces a good model. Then the result follows from Proposition \ref{pro:bgt}. 
\end{proof}

\subsection*{Local good approximations between quotients} In the remainder of this section, we show that under the right conditions, local good approximations pass to local good approximations between local group quotients. For that goal, we require the following definition.

\begin{Def}[Approximation by subsets]
    Let $\phi_i : G_i \to G$ be a sequence of local good approximations with regular neighborhoods $A_i \subset G_i$ and $A \subset G$. We say a sequence of open subsets $B_i \subset A_i $ \emph{approximates} an open set $B \subset A$ if the following holds: 
    \begin{itemize}
        \item  For all $U \subset B $ open nonempty, one has $\phi_i (B_i) \cap U \neq \emptyset$ for $i$ large enough. 
        \item  For all $V \subset G$ open with $\overline{B} \subset V$, one has $\phi_i (B_i) \subset V$ for $i$ large enough.
        \item For each compact $K\subset B$, one has 
        \[   \phi_i ^{-1} (K) \cap A_i ^8  \subset B_i       \] 
        for $i$ large enough. 
    \end{itemize}
    In particular, if $G$ is metrizable,  $\phi_i (B_i)$ converges in the Hausdorff topology to $B$. 
\end{Def}

Combining Lemma \ref{lem:approximation-by-internal-sets} with a diagonal argument, one can approximate any open set (see the proof of \cite[Lemma 4.15]{nsz}).

\begin{Lem}\label{lem:approximate}
    Let $\phi_i : G_i \to G$ be a sequence of local good approximations with regular neighborhoods $A_i \subset G_i$ and $A \subset G$. If $G$ is metrizable, then for any open set $B \subset A$ there is a sequence of open sets  $B_i \subset A_i$ that approximates $B$. Moreover, if $B$ is symmetric, the sets $B_i$ can be taken to be symmetric.
\end{Lem}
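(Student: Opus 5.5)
Fix a compatible metric on $G$ and an exhaustion of $B$ by compact sets $K_1 \subset K_2 \subset \cdots \subset B$ with $K_m \subset \inte (K_{m+1})$ and $\bigcup_m K_m = B$; if $B$ is symmetric, take every $K_m$ symmetric. For each $m$, apply Lemma \ref{lem:approximation-by-internal-sets} (whose proof carries over verbatim to local good approximations) to the pair $K_m \subset B$. This gives open sets $A_{i,m}' \subset A_i$, symmetric when $B$ is, with
\[ \phi_i^{-1}(K_m) \cap A_i \subset A_{i,m}' \subset \phi_i^{-1}(B) \]
for all $i \geq i_m$. By \eqref{item:lga-4}, after enlarging $i_m$ we also have $\phi_i^{-1}(K_m)\cap A_i^8 \subset A_i$, so $\phi_i^{-1}(K_m) \cap A_i^8 \subset A_{i,m}'$ for $i \geq i_m$.

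Next, for each $m$ I will use \eqref{item:lga-1} applied to a finite cover of $K_m$ by balls of radius $1/m$ centred in $K_m$. This shows that for $i$ beyond some index (absorbed into $i_m$), $\phi_i(A_{i,m}')$ meets each of those balls. Indeed, \eqref{item:lga-1} provides some $a\in A_i$ with $\phi_i(a)$ in a slightly smaller ball contained in $\inte (K_{m+1})$. Here I work with $K_{m+1}$ in place of $K_m$ in the inclusion above, shifting indices so that points landing in $\inte(K_{m+1})$ are captured. Taking $i_m$ strictly increasing, set $m(i) := \max\{m : i_m \leq i\}$ and $B_i := A_{i,m(i)}'$. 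Then $m(i) \to \infty$.

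Checking the three conditions of the definition of approximation is then routine.
\begin{itemize}
\item For the third condition: a compact $K\subset B$ lies in some $\inte(K_m)$, hence in $K_{m(i)}$ once $m(i)\ge m$. So $\phi_i^{-1}(K)\cap A_i^8 \subset B_i$.
\item For the first condition: any open nonempty $U\subset B$ contains a ball of radius $2/m$ around a point of some $K_m$, so the density statement gives $\phi_i(B_i)\cap U\neq\emptyset$.
\item For the second condition: $\phi_i(B_i)\subset B\subset V$ trivially, because $A'_{i,m}\subset \phi_i^{-1}(B)$.
\end{itemize}

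The only delicate point is the diagonal choice. Each individual property holds only for $i$ beyond an $m$-dependent threshold, so $m(i)$ must grow slowly enough that all the finitely many requirements at level $m(i)$ are already satisfied at index $i$; defining $m(i)$ via the increasing thresholds $i_m$ handles this. Symmetry is preserved because every $A'_{i,m}$ was chosen symmetric.
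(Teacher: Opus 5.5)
Your proposal is correct and is essentially the paper's argument: the paper likewise obtains the $B_i$ by applying Lemma \ref{lem:approximation-by-internal-sets} to a compact exhaustion of $B$ and then diagonalizing, and it defers the details to \cite[Lemma 4.15]{nsz}. The ball-cover bookkeeping you use for the first condition is unnecessary: apply \eqref{item:lga-1} to an open ball whose closure is a compact subset of $U$, and the third condition you already proved then forces the resulting points into $B_i$.
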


Let $\phi_i : G_i \to G$ be a sequence of local good approximations with regular neighborhoods $A_i \subset G_i$ and $A \subset G$. Let $H_i \subset G_i$ and $H \subset G$  normal sub-local groups with normalizing neighborhoods $V_i \subset A_i$ and $V \subset A$, respectively.  Let $W_i \subset G _ i $ and $W \subset G$ be open symmetric sets with $W _i ^{6 } \subset  V_i $ and $W^{6} \subset V$, and denote  $(G_i/H_i)_{W_i}$ and $(G/H)_W$ by $Q_i$ and $Q$, respectively.

Assume: 
\begin{itemize}
    \item $G$ is metrizable.
    \item The sequence $\phi_i (H_i )$ converges in the Hausdorff topology to $H$.
    \item The sets $W_i$ approximate $W$.
\end{itemize}
Let $\pi _i :  W_i \to Q_i $ and $\pi : W \to Q$ denote the projections, equip $G$ with a compatible metric $d: G \times G \to \mathbb{R}$, and let $\delta _i \to 0 $ be such that the Hausdorff distance between $\phi_i (W_i) $ and $W$ is less than $\delta_i$. 

For  each $i$, let $\theta_i : Q_i \to W_i $ be a section, and define $\psi_i : Q_i \to Q$ as  $\psi_i (q) = \pi (x)$ where $x \in W$ satisfies  
    \[   d ( x , \phi_i (\theta_i (q))) < \delta _i  .   \]
Let $B \subset A$ be an open symmetric set with $\overline{B}^{200} \subset W $, and let $B_i \subset A_i$ be a sequence of open symmetric sets that approximate $B$.  
   
\begin{Pro}\label{pro:lga-quotient}
    The maps $\psi _i : Q_i \to Q $ are local good approximations with regular neighborhoods $\pi _ i (B_i ) \subset Q_i$ and $\pi ( B )  \subset Q$. 
\end{Pro}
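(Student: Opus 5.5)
The plan is to check properties (I$_{\text{loc}}$)--(V$_{\text{loc}}$) one at a time for $\psi_i$ with regular neighborhoods $\pi_i(B_i)$ and $\pi(B)$. The basic tool is a ``transfer'' principle. For $q\in Q_i$ with $\theta_i(q)\in W_i$, the element $\phi_i(\theta_i(q))$ is $\delta_i$-close to a point $x\in W$, and $\psi_i(q)=\pi(x)$. So $\psi_i(q)$ is, up to an error tending to $0$, the class of $\phi_i$ of any representative of $q$. To justify this, take two representatives $y,y'\in W_i$ of $q$, so $y^{-1}y'\in H_i$. By (III$_{\text{loc}}$), $\phi_i(y)^{-1}\phi_i(y')$ is close to $\phi_i(y^{-1}y')\in\phi_i(H_i)$. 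Hausdorff convergence $\phi_i(H_i)\to H$ then puts this element close to $H$. Since $\pi$ is continuous and $H$ is closed in $V$, the classes $\pi$ of nearby points of $W$ lying in almost the same $H$-coset are close in $Q$. I would record this as a lemma: for every identity neighborhood $U\subset Q$ there is $i_0$ such that for $i\ge i_0$, any $y\in W_i\cap A_i^8$ representing $q$, and any $x\in W$ with $d(x,\phi_i(y))$ small, one has $\psi_i(q)\in \pi(x)U$. The proof is a compactness argument via Lemma \ref{lem:convergent-sequences}: pass to convergent subsequences, and use that limits of $\phi_i(h_i)$ with $h_i\in H_i$ lie in $\overline H$, which is $H$ near the identity.

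With the lemma in hand, most axioms are routine. For (I$_{\text{loc}}$), given $U\subset\pi(B)$ open, $\pi^{-1}(U)\cap B$ is open, so approximation of $B$ by $B_i$ gives $b_i\in B_i$ with $\phi_i(b_i)\in\pi^{-1}(U)$; the lemma then yields $\psi_i(\pi_i(b_i))\in U$, after shrinking $U$ slightly. (II$_{\text{loc}}$) follows from the second approximation property for $B_i$ combined with continuity of $\pi$ and the lemma. For (III$_{\text{loc}}$), note that products of eight elements of $\pi_i(B_i)$ lift to products in $B_i^{16}\subset W_i$. Here I use $\overline B^{200}\subset W$ together with the approximation property, which gives $B_i^{200}\subset W_i$ eventually (argued by contradiction with Lemma \ref{lem:convergent-sequences}). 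The near-homomorphism property of $\phi_i$ on $A_i^{16}$, and hence on $A_i^8$ after adjusting indices, then descends through $\pi$, which is a morphism of local groups, and the lemma handles the change of representatives. For (V$_{\text{loc}}$), take the $U_i$ from (V$_{\text{loc}}$) for $\phi_i$ with $U$ replaced by $\pi^{-1}(U)$ and push them forward by $\pi_i$, which is open. Pre-compactness, openness, symmetry, and multiplicativity of $\pi_i(B_i)$ and $\pi(B)$ come from the corresponding properties of $B_i,B$ together with $B^{200}\subset W$.

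I expect the main obstacle to be (IV$_{\text{loc}}$), no compression. Let $K\subset\pi(B)$ be compact and $q\in(\pi_i(B_i))^8$ with $\psi_i(q)\in K$; we must show $q\in\pi_i(B_i)$. A representative $y=\theta_i(q)$ has $\phi_i(y)$ close to $x$ with $\pi(x)\in K$. Then $x\in kH$ for some $k$ in a compact $K'\subset B$, where $K'$ is a compact lift of $K$ to $B$. Such a lift exists by covering $K$ with images of relatively compact opens and using that $\pi|_W$ is open. Thus $\phi_i(y)$ is close to $k\,\phi_i(h)$ for some $h\in H_i$. The key step is to choose $h$ with $\phi_i(h)$ near $k^{-1}x$, using Hausdorff convergence and controlling that $h$ stays in a bounded power of $A_i$. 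Then $\phi_i(yh^{-1})$ lies near $K'$, hence in a slightly larger compact subset of $B$. The no-compression property of the approximation $B_i\to B$, applied to $yh^{-1}\in A_i^8$, then gives $yh^{-1}\in B_i$, so $q=\pi_i(yh^{-1})\in\pi_i(B_i)$. The delicate points are keeping all products inside $A_i^8$ and $V_i$, where the assumptions $W_i^6\subset V_i$ and $B^{200}\subset W$ supply the room, and ensuring that $yh^{-1}$ still represents $q$, which uses the normal sub-local group property of $H_i$.
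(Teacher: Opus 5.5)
Your proposal is correct and is essentially the paper's proof. Your transfer lemma is the paper's Lemma \ref{lem:quotient-compactness}, which the paper states sequentially for $g_i\in B_i^k$ with $k\le 200$; you should restrict your version in the same way, to elements whose $\phi_i$-images stay in a compact subset of $W$ such as $\overline{B}^{200}$, because near $\partial W$ neither the continuity of $\pi$ nor the products in $Q$ are controlled. With that, (I$_{\text{loc}}$)--(III$_{\text{loc}}$) and (V$_{\text{loc}}$) go exactly as you sketch; for (III$_{\text{loc}}$) the almost-homomorphism is only needed for $a_i,b_i\in B_i^8\subset A_i^8$, so no appeal to $A_i^{16}$ is required. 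Your (IV$_{\text{loc}}$) argument is also the paper's: correct the representative by some $h\in H_i$ with $\phi_i(h)$ close to the $H$-component, then apply no-compression for the sets $B_i$ approximating $B$. The paper just runs it by contradiction along subsequences instead of through a compact lift $K'$.
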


Throughout the proof of Proposition \ref{pro:lga-quotient}, we make repeated use of the following fact.

\begin{Lem}\label{lem:quotient-compactness}
 Let  $k \leq 200 $ and $g_i \in B^{k}_i$.  If $\phi_i (g_i) \to g \in G$, then $g \in \overline{B}^k$, $g_i \in W_i$ for $i$ large enough, and  
 \begin{equation}\label{eq:psi-is-good}
     \psi_i ( \pi _i (g_i)) \to \pi (g)  .         
 \end{equation}
\end{Lem}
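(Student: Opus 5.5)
The argument has three parts, in this order: locate $g$ in $\overline{B}^k$; show $g_i\in W_i$ eventually; then compare $\psi_i(\pi_i(g_i))$ with $\pi(g)$ using the definition of $\psi_i$.

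\textbf{Step 1: $g\in\overline{B}^k$.} Write $g_i = b_i^{(1)}\cdots b_i^{(k)}$ with $b_i^{(j)}\in B_i$. By the no expansion property of the approximating sets $B_i$, every subsequential limit of $\phi_i(b_i^{(j)})$ lies in $\overline{B}$. Since $\overline{A}$ is compact, we may pass to a subsequence along which $\phi_i(b_i^{(j)})\to b^{(j)}\in\overline{B}$ for each $j\le k$. The factors lie in $A_i$, and the partial products lie in $A_i^{k}$. Since $k\le 200$ we cannot directly invoke \eqref{item:lga-3}, which only controls products in $A_i^8$. Instead I would use $\overline{B}^{200}\subset W\subset A$ and induct on the number of factors. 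At each stage the partial product $b_i^{(1)}\cdots b_i^{(m)}$ has image converging to a point of $\overline{B}^m\subset W$. Pick a compact neighborhood $K\subset A$ of that point. Then \eqref{item:lga-4} places the partial product in $A_i$, and \eqref{item:lga-3} applied to (partial product)$\cdot b_i^{(m+1)}\in A_i^2$ continues the induction. This yields $\phi_i(g_i)\to b^{(1)}\cdots b^{(k)}$. By uniqueness of limits, $g\in\overline{B}^k$; since this holds for every subsequence, it holds for the full sequence.

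\textbf{Step 2: $g_i\in W_i$ eventually.} Because $g\in\overline{B}^k\subset W$ and $W$ is open, we can choose a compact neighborhood $K\subset W$ of $g$. Step 1 gives $g_i\in A_i\subset A_i^8$, and $\phi_i(g_i)\in K$ for large $i$. The third property in the definition of approximation by subsets, applied to $W_i$ approximating $W$, then gives $g_i\in W_i$.

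\textbf{Step 3: $\psi_i(\pi_i(g_i))\to\pi(g)$.} Set $h_i:=\theta_i(\pi_i(g_i))\in W_i$, so $g_i^{-1}h_i\in H_i$. The goal is to show that, up to a subsequence, $\phi_i(h_i)\to g u$ for some $u\in H$ with $gu\in W$. Granting this, the point $x_i\in W$ chosen in the definition of $\psi_i$ satisfies $d(x_i,\phi_i(h_i))<\delta_i\to0$, so $x_i\to gu$. Continuity of the projection $\pi$ on $W$ then gives $\psi_i(\pi_i(g_i))=\pi(x_i)\to\pi(gu)=\pi(g)$, since $gu\sim g$. Because the limit is independent of the subsequence, the whole sequence converges.

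To obtain $u$: since $g_i,h_i\in W_i\subset V_i\subset A_i$, the element $g_i^{-1}h_i$ lies in $A_i^2\cap H_i$. Almost homomorphism gives $\phi_i(h_i)\approx\phi_i(g_i)\phi_i(g_i^{-1}h_i)$. The Hausdorff convergence $\phi_i(H_i)\to H$ shows that any subsequential limit $u$ of $\phi_i(g_i^{-1}h_i)$ lies in $\overline{H}$. By Lemma \ref{lem:convergent-sequences}, such limits exist. Then $\phi_i(h_i)\to gu$, and $gu\in\overline{W}$ because $\phi_i(W_i)\to W$ in the Hausdorff topology.

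\textbf{Main obstacle.} Two points need care. First, we must ensure $u$ actually lies in $H$, not merely in $\overline{H}$; for this I would use that $H$ is closed in $V$ and that $u\in g^{-1}\overline{W}\subset W^{7}\subset V$, after shrinking via $W^6\subset V$ and $g\in W$. Second, $gu$ may land on $\partial W$, where $\pi$ is not defined. To handle this I would choose $x_i$ and the representative flexibly. Because $\pi(x_i)$ only depends on the class of $x_i$, it suffices to show $x_i\sim x_i'$ with $x_i'\to g$. This follows from $g^{-1}x_i\in H$ up to an error tending to $e$, combined with the local structure of $H$ near $e$ (a sub-local group is closed and $H\cdot$small stays controlled). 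Equivalently, one can argue directly in the quotient topology: $\pi(x_i)=\pi(x_i u^{-1}\cdot(\text{correction}))$ with $x_iu^{-1}\to g$.
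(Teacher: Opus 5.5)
Your proposal is correct and follows essentially the paper's route. You write $g_i$ as a product of factors in $B_i$ to get $g\in\overline{B}^k$, invoke the approximation property of $W_i$, and then write $\theta_i(\pi_i(g_i))=g_i\cdot\bigl(g_i^{-1}\theta_i(\pi_i(g_i))\bigr)$, with the second factor converging to some $u\in\overline{H}\cap V=H$; this is exactly the paper's argument with $h_i$ and $h$. Two points you handle are ones the paper passes over: the inductive bookkeeping that keeps partial products inside $A_i^8$, and replacing $x_i$ by the equivalent point $x_iu^{-1}\to g\in W$. On that replacement, no extra correction is needed since $x_i^{-1}(x_iu^{-1})=u^{-1}\in H$ exactly, and $u\in g^{-1}\overline{W}\subset W^3\subset V$ suffices, so you do not need $W^7\subset V$, which is not among the hypotheses.
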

\begin{proof}
Since the sequence $W_i $ approximates $W$ and $\overline{B}^k \subset W$, the fact that $g_i \in W_i$ for $i$ large enough follows from $g \in \overline{B}^k$, which we now prove. For each $i$, write $g_i = b_{i,1} \cdots b_{i,k}$ with $b_{i,j} \in B_i$ for each $j$. After passing to a subsequence, for each $j$ we have $\phi_i (b_{i,j}) \xrightarrow[i \to \infty]{} b_j$ for some $b_j \in \overline{B} $.  This implies 
\[     \phi_i (g_i) \to b_1 \cdots b_k \in \overline{B}^k.       \]
This proves the first two claims, and  \eqref{eq:psi-is-good} remains to be proven. If it fails, after passing to a subsequence, no subsequence of $\psi _i (\pi _i (g_i))$ converges to $\pi (g)$.    By definition,  $\theta_i (\pi _i (g_i)) = g_i h_i \in W_i $ for some $h_i \in H_i \cap W_i ^2 $.  After passing to a subsequence, $\phi_i (h_i) \to h$ for some $h \in \overline{H} \cap \overline{W}^2 \subset H$.  Consequently, 
    \[    \phi_i ( \theta _i (\pi _i (g_i)))  \to g h  .      \]
    Then, $\psi _i (\pi _i (g_i)) = \pi (x_i)$ for a sequence $x_i \to gh$. By continuity of $\pi$, we conclude $\psi_i ( \pi _i (g_i)) \to \pi (g) $; a contradiction.  
\end{proof}

\begin{proof}[Proof of Proposition \ref{pro:lga-quotient}]
By Lemma \ref{lem:quotient-compactness}, for $i$ large enough,  $\pi_i (B_i) \subset Q_i $ is a multiplicative set.

To check \eqref{item:lga-1}, fix $q \in \pi (B)$ and $b \in B$ with $\pi (b) = q$. Since the sets $B_i$ approximate $B$, there are $b_i \in B_i$ with $\phi_i (b_i) \to b$. By Lemma \ref{lem:quotient-compactness}, we have  $\psi_i (\pi _i (b_i)) \to q$.   Since $q$ was arbitrary, \eqref{item:lga-1} is proved.

Assuming \eqref{item:lga-2} fails, after passing to a subsequence, there are $b_i \in B_i$ with $\psi_i (\pi _i (b_i)) \to q \in Q \backslash \overline{\pi (B) } $. After further passing to a subsequence, $\phi_i (b_i) \to b$ for some $b \in \overline{B}$.  By Lemma \ref{lem:quotient-compactness}, we have $ q =  \pi (b) \in \pi (\overline{B} ) \subset \overline{\pi (B)}$; a contradiction.

Assuming \eqref{item:lga-3} fails, after passing to a subsequence, there is an identity neighborhood $U \subset Q$, and $a_i , b_i \in B_i ^{8}$ with
\begin{equation}\label{eq:bad-homomorphism}
     \psi_i (\pi_i(a_ib_i))^{-1} \psi_i (\pi_i(a_i)) \psi _i (\pi_i (b_i)) \notin U     
\end{equation}
for all $i$. After further passing to a subsequence, we have   
\[   
    \phi_i (a_i)  \to a, \hspace{2cm}
    \phi_i (b_i)  \to b,  \hspace{2cm}
    \phi_i (a_ib_i)  \to ab,                 \]
for some $a,b \in \overline{B}^8$.  By Lemma \ref{lem:quotient-compactness}, we have
\[   
    \psi_i (\pi_i(a_i))  \to \pi (a), \hspace{1.5cm}
    \psi_i (\pi_i (b_i))  \to \pi (b),  \hspace{1.5cm}
    \psi_i (\pi_i (a_ib_i) )  \to \pi(ab),             \]
    contradicting \eqref{eq:bad-homomorphism}.

Assuming \eqref{item:lga-4} fails, after passing to a subsequence,  there are $b_i \in B_i^8 $  with $\pi _i (b_i) \notin \pi (B_i)$ for all $i$ but $\psi_i (\pi _i (b_i)) \to q$ for some $q  \in \pi (B)$.  After further passing to a subsequence, we have  $ \phi_i (b_i) \to a $ for some $a  \in \overline{B}^8$. By Lemma \ref{lem:quotient-compactness}, we have $\pi (a) = q \in \pi (B)$, so $a = b h$ with $b \in B$ and $h \in H \cap W  $.  

Pick $h_i \in H_i$ with $\phi_i (h_i) \to h$.  Then $\phi_i ( b_i h_i^{-1} ) \to b \in B$, so $b_i h_i ^{-1} \in B_i$ for $i$ large enough. This implies $\pi _i (b_i) \in \pi (B_i)$; a contradiction.

Finally, to prove \eqref{item:lga-5}, let $U_0 \subset Q$ be an open identity neighborhood. Since $\pi$ is continuous, there is an identity neighborhood $V_0 \subset W$ with $\overline{V_0} \subset \pi ^{-1} (U_0)$. Let $V_i \subset B_i$ be a sequence of identity neighborhoods with $\phi_i (V_i ) \subset V_0$ for $i$ large enough. We claim that $\psi _i (\pi_i (V_i)) \subset U_0$ for $i$ large enough. If this is not the case, after passing to a subsequence, there are $b_i \in V_i$ with $\psi_i (\pi _i (b_i)) \to q $ for some $q \notin  U_0 $. After passing to a subsequence, we also have $\phi_i (b_i) \to b$ for some $b \in \overline{V}_0 $. By  Lemma \ref{lem:quotient-compactness}, we conclude  $ q  = \pi (b) \in \pi (\overline{V_0}) \subset U_0$; a contradiction.    
\end{proof}

\section{Reduction to the NSS case}\label{sec:to-nss}

The goal of this section is to reduce the proof of Theorem \ref{thm:main-good} to the case when the sequence $G_i$ has the NSS property. This case is addressed in the theorem below.  

\begin{Thm}\label{thm:main-nss}
     Let $G_i$ be a sequence of Lie groups and  $\phi _i : G_i \to G$ a sequence of good approximations. Assume $G$ is a Lie group and the sequence $G_i$ has the NSS property. Then $\mathfrak{g}$, the  Lie algebra of $G$, admits an ideal $\mathfrak{h} \trianglelefteq \mathfrak{g}$ with $\mathfrak{g}/ \mathfrak{h}$ nilpotent and 
    \begin{equation}\label{eq:h-ineq-3}
         \text{dim} (\mathfrak{h}) = \limsup_{i \to \infty} \text{dim} (G_i) .  
    \end{equation}
\end{Thm}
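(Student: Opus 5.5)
Pass to a subsequence along which $\dim(G_i)=d$ is constant and equals the $\limsup$; by Theorem \ref{thm:nsz}, $d\le \dim(G)$. Fix an inner product on $\mathfrak{g}$, let $B=\exp(B_r^{\mathfrak{g}}(0))$ with $r$ small, and let $B_i$ be given by Proposition \ref{pro:zoom-i}, so Theorem \ref{thm:gleason} and Lemma \ref{lem:norm-approximates} apply. This gives genuine norms $\Vert\cdot\Vert_i$ on $\mathfrak{g}_i$, comparable to the escape norms $\vert\cdot\vert_{B_i}$. By the third Gleason inequality, the brackets on $\mathfrak{g}_i$ satisfy $\Vert[\xx,\yy]\Vert_i\le C\Vert\xx\Vert_i\Vert\yy\Vert_i$ with $C$ independent of $i$. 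I would get this by passing from group commutators of $\exp(t\xx),\exp(t\yy)$ to the bracket as $t\to0$, after replacing $\Vert\cdot\Vert_i$ by an inner-product norm via Theorem \ref{thm:john}.

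\textbf{One-parameter subgroups converge.} For $\vv\in\mathfrak{g}_i$ of norm at most $1$, the maps $t\mapsto\phi_i(\exp(t\vv))$ on $[-1,1]$ are equicontinuous, by escape-norm control and almost continuity. Hence they subconverge to a local one-parameter subgroup $t\mapsto\exp(t\,\Lambda(\vv))$ of $G$; homomorphism-ness comes from \eqref{item:ga-3}. Taking ultralimits, this defines a limit map $\Lambda$ from the ultraproduct of the $\Vert\cdot\Vert_i$-unit balls to $\mathfrak{g}$. The NSS property should force $\Lambda$ to be injective on directions with a uniform lower bound $\vert\Lambda(\vv)\vert\gtrsim\Vert\vv\Vert_i$. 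Otherwise a nontrivial one-parameter subgroup of escape norm bounded below would collapse to $e$, and the closure of the subgroup it generates up to time $\sim1$ would produce small subgroups. Choose orthonormal bases $\vv_1^i,\dots,\vv_d^i$ of $\mathfrak{g}_i$ and set $\ww_j=\Lambda(\vv_j)$.

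\textbf{Construction of $H$.} Let $H_i=\{\exp(t_1\vv_1^i)\cdots\exp(t_d\vv_d^i):\vert t_j\vert<\varepsilon\}$. By Lemma \ref{lem:lie-ift}, applied with the uniform constant $C$, $H_i$ contains $\exp$ of a $\delta$-ball in $\mathfrak{g}_i$ and is a sub-local group. By Lemma \ref{lem:bch-quant}, products of two elements of $\exp(B_\delta)$ stay in $\exp$ of a ball of controlled radius. Let $H$ be the Hausdorff limit of $\phi_i(H_i)$. The uniform bracket bound lets the Lie brackets of $\mathfrak{g}_i$ pass to the ultralimit, giving a Lie algebra structure on $\mathbb{R}^d$ that is carried by $\Lambda$ onto a $d$-dimensional subalgebra $\mathfrak{h}=\mathrm{span}(\ww_j)\le\mathfrak{g}$. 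Then $H$ is (locally) $\exp$ of a neighborhood of $0$ in $\mathfrak{h}$, so it is a sub-local group with $T_eH=\mathfrak{h}$ and $\dim\mathfrak{h}=d$.

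\textbf{Normality and nilpotency.} By the second Gleason inequality, conjugation by $B_i^{10}$ distorts $\Vert\cdot\Vert_i$ by a bounded factor, so $H_i$ is normal as a sub-local group with a normalizing neighborhood inside $A_i$. Passing to the limit, $H$ is normal, so $\mathfrak{h}$ is an ideal. Next pick $W_i$ approximating a small $W$ (Lemma \ref{lem:approximate}) with $W^6\subset V$. By Proposition \ref{pro:lga-quotient}, $\phi_i$ descends to local good approximations $(G_i/H_i)_{W_i}\to(G/H)_W$. Since $H_i$ contains a full identity neighborhood of $G_i$, the quotients are discrete. By Theorem \ref{thm:bgt-local} and Proposition \ref{pro:lie-local}, $\mathfrak{g}/\mathfrak{h}$ is nilpotent.

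\textbf{Main obstacle.} The hardest step is showing that $H$ really is a $d$-dimensional sub-local group. This needs non-degeneration of the limits $\ww_j$, which uses NSS to keep $\Lambda$ injective. It also needs $H$ to be closed under products in a neighborhood while the $\phi_i$ are only approximate homomorphisms.
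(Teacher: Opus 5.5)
Your outer steps match the paper: constant dimension, the norms $\Vert\cdot\Vert_i$ with a uniform bracket bound, convergence of one-parameter subgroups, and the final descent to discrete local quotients with Theorem~\ref{thm:bgt-local}. The middle of the argument, however, has two genuine gaps.

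\textbf{Gap 1: $H_i$ and $H$ are not sub-local groups for the neighborhoods you need.} You never specify the associated or normalizing neighborhoods of $H_i$ and $H$, and the sets you define cannot serve. Proposition~\ref{pro:lga-quotient} needs normalizing neighborhoods containing $W_i^6$ and $W^6$, with $W_i$ approximating an open $W\subset G$. So $V_i$ contains elements of $G_i$ that lie in no small $\exp$-ball of $\mathfrak g_i$ (the pieces near $i^2j$ in Example~\ref{ex:bad-approximation}). Moreover, a sub-local group must contain the whole trace of the local subgroup on its associated neighborhood, so its size is dictated by $V$, not by an independent parameter $\varepsilon$.

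Your limit $H=\Theta([-\varepsilon,\varepsilon]^d)$ is compact. Any open $V\supset H$ therefore contains $\exp(\varepsilon\ww_1)\exp(s\ww_1)=\exp((\varepsilon+s)\ww_1)\notin H$ for small $s>0$. So the inference ``$\exp$ of a neighborhood of $0$ in $\mathfrak h$, hence a sub-local group'' fails. On the $G_i$ side, you must show that products and conjugates of elements of $H_i$ that land in the large set $V_i$ fall back into $H_i$. That requires a quantitative reason why an element of $V_i$ that is $\exp$ of a moderately small vector is in fact $\exp$ of a much smaller one.

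This is the paper's key device, a separation of scales. It sets $H_i:=Z_i\cap U_i$, where $Z_i$ is the exponential of the closed unit $\Vert\cdot\Vert_i$-ball and $U_i$ approximates $U=\exp(\{\vert\vv\vert_B<r_0\})$ with $r_0\ll 1$. Lemma~\ref{lem:lie-norm-continuity} yields $H_i\subset\exp(\{\vert\vv\vert_{B_i}<2r_0\})$ (Lemma~\ref{lem:useful}). Then Lemma~\ref{lem:bch-quant} keeps products in $Z_i$, hence in $H_i$ when they lie in $U_i$. Compactness of $Z_i$ gives relative closedness, and $H:=Z\cap U$ inherits both properties. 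Your Gleason-inequality route to normality does work for this $H_i$: combined with Lemma~\ref{lem:norms-are-equal-if-small} it gives $\vert\Ad_a\ww\vert_{B_i}\le 2000\,\vert\ww\vert_{B_i}$ for $a\in U_i$. It is even simpler than the paper's argument via the bound $M$ on $\Ad$ in $G$, but it does not work for a fixed-size cube.

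\textbf{Gap 2: the dimension count assumes $\Lambda$ is linear.} The equality $\dim\mathfrak h=d$ rests on $\Lambda$ being linear and bracket-preserving, which you assert but never prove. The $\ww_j$ are produced direction by direction. A lower bound $\vert\Lambda(\vv)\vert_B\ge 1/(2C_0)$ on unit vectors does not make $\ww_1,\ldots,\ww_d$ linearly independent, let alone a basis of a subalgebra. (That bound really comes from $\exp(\vv/\vert\vv\vert_{B_i})\in\partial B_i$, Lemma~\ref{lem:convergent-sequences}, and Lemma~\ref{lem:norm-approximates}, not from producing small subgroups.)

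The paper needs no linearity. Once $H$ is a sub-local group, $\dim\mathfrak h\le d$ because $H$ lies in the image of $(t_j)\mapsto\exp(t_1\ww_1)\cdots\exp(t_d\ww_d)$ (Lemma~\ref{lem:lie-ift} and Sard). And $\dim\mathfrak h\ge d$ because the $\phi_i$ induce local good approximations $H_i\to H$ that inherit NSS, to which Theorem~\ref{thm:nsz-local} applies.

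Your lower-bound idea can be repaired, and it would bypass Theorem~\ref{thm:nsz-local}. The uniform bracket bound gives $\exp(tc_1\vv_1^i)\cdots\exp(tc_d\vv_d^i)=\exp(t\sum_j c_j\vv_j^i+O(t^2))$ uniformly in $i$. Lemma~\ref{lem:lie-norm-continuity} then forces $\vert\log(\exp(tc_1\ww_1)\cdots\exp(tc_d\ww_d))\vert_B\gtrsim t/C_0$ for small $t$, which is impossible if $\sum_j c_j\ww_j=0$. Together with $\exp(t\ww_j)\in H$, this gives $\dim\mathfrak h\ge d$. But this argument is absent from your proposal, and it presupposes that $H$ is already a sub-local group.
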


To reduce Theorem \ref{thm:main-good} to Theorem \ref{thm:main-nss}, we need an easy technical lemma. It states that in the presence of small subgroups $H_i \trianglelefteq G_i$, one can quotient by $H_i$ and retain the good approximation.

\begin{Lem}\label{lem:small-quotient}
Let $\phi_i : G_i \to G$ be a sequence of good approximations with regular neighborhoods $A_i \subset G_i$, $A \subset G$. Consider a sequence of small subgroups  $H_i \leq G_i$  normalized by $A_i$ for $i$ large enough and let $G_i' : = \langle A_i \rangle  / H_i$. If we denote by $\pi _i : \langle A_i \rangle \to G_i'$ the natural projection, then
\begin{itemize}
    \item For any sequence of sections $\theta_i : G_i ' \to \langle A_i \rangle $, the maps $\phi_i ' : G_i ' \to G$ given by $\phi_i ' : = \phi_i \circ \theta_i$ are good approximations with regular neighborhoods $A_i ' : = \pi _i (A_i)  \subset G_i'$ and $A \subset G$.
    \item Given a sequence of subgroups $W_i \leq G_i ' $ with $W_i \subset A_i ' $ for all $i$ large, the sequence $W_i$ consists of small subgroups if and only if the sequence of preimages $\pi ^{-1} _i ( W_i) \leq G_i$ consists of small subgroups.  
\end{itemize}
\end{Lem}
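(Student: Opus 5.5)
The plan is to verify properties (I)--(V) for $\phi_i' = \phi_i\circ\theta_i$ directly. The whole argument rests on one principle: \emph{replacing an element $g_i$ by $g_i h_i$ with $h_i \in H_i$ changes $\phi_i(g_i)$ by an amount tending to zero.}

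\textbf{Preliminary.} First I would establish this principle. Fix $n$ and sequences $g_i \in A_i^n$, $h_i \in H_i$. Since $H_i \subset A_i$ for large $i$, we have $g_i h_i \in A_i^{n+1}$. Property \eqref{item:ga-3} gives
\[ \phi_i(g_ih_i)^{-1}\phi_i(g_i)\phi_i(h_i) \to e, \]
and smallness gives $\phi_i(h_i)\to e$. All these values lie in a fixed compact set by Lemma \ref{lem:convergent-sequences}, so uniform continuity of multiplication yields $d(\phi_i(g_ih_i),\phi_i(g_i))\to 0$, uniformly in the choices.

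There is one subtlety. The section $\theta_i$ may pick a representative $\theta_i(\pi_i(a))$ far outside $A_i^n$ even when $a \in A_i$. I would therefore first show that we may assume $\theta_i(\pi_i(A_i^n)) \subset A_i^{n+1}$; this holds because $\theta_i(\pi_i(a)) = ah$ with $h \in H_i \subset A_i$. For an element of $(A_i')^n = \pi_i(A_i^n)$, any lift has the form $ah$ with $a\in A_i^n$, so the representative stays in $A_i^{n+1}$. Hence $\phi_i'$ agrees with $\phi_i$ on corresponding elements up to $o(1)$, via the preliminary estimate.

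\textbf{Properties (I)--(V).} With this in hand, the axioms transfer as follows.
\begin{enumerate}
\item[(I), (II)] These follow since $\phi_i'(\pi_i(a)) \approx \phi_i(a)$ for $a \in A_i$.
\item[(III)] For $x,y \in (A_i')^n$, lift them to $a,b\in A_i^n$. Then $\theta_i(xy)$, $\theta_i(x)\theta_i(y)$ and $ab$ differ by elements of $H_i$ and lie in $A_i^{2n+2}$, so \eqref{item:ga-3} for $\phi_i$ applies.
\item[(IV)] If $x\in (A_i')^n$ has $\phi_i'(x)\in K$, then the representative $g=\theta_i(x)\in A_i^{n+1}$ has $\phi_i(g)\in K$. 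Thus $g\in A_i$ by \eqref{item:ga-4} for $\phi_i$, and so $x\in A_i'$.
\item[(V)] Let $U$ be given and pick $U'$ with $U'^2\subset U$. Take $U_i$ with $\phi_i(U_i)\subset U'$, and use the images $\pi_i(U_i)$, which are open since $\pi_i$ is open. We have $\theta_i(\pi_i(u)) = uh$, and I must show $\phi_i(uh)\in U$. This is the estimate above, made uniform over all $u\in U_i$ by a contradiction-subsequence argument.
\end{enumerate}
Openness, symmetry and precompactness of $A_i'$ are immediate from $\pi_i$ being an open continuous homomorphism.

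\textbf{Second bullet.} Suppose $W_i \subset A_i'$. If $\pi_i^{-1}(W_i)$ is small, then each $w\in W_i$ has a representative in $\pi_i^{-1}(W_i)$ with $\phi_i\to e$, so $\phi_i'(w)\to e$. Conversely, suppose $W_i$ is small. Any $g\in\pi_i^{-1}(W_i)$ equals $\theta_i(w)h$ with $w\in W_i$, so $\phi_i(g)\approx \phi_i'(w)\to e$. It remains to show that $\pi_i^{-1}(W_i)\subset A_i$, and I expect this containment to be the main obstacle. By the above, $\pi_i^{-1}(W_i)\subset A_i^{2}$ has $\phi_i$-image near $e$. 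Taking a compact neighborhood $K$ of $e$ in $A$ and applying \eqref{item:ga-4} with $n=2$ gives the containment for large $i$. The uniformity needed here again comes from a subsequence-contradiction argument.
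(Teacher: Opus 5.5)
Your proposal is correct and follows essentially the same route as the paper. The paper's proof rests on exactly your preliminary principle, written as the decomposition $\phi_i'(\pi_i(g)) = \phi_i(g)\,\phi_i(h)\,[\phi_i(h)^{-1}\phi_i(g)^{-1}\phi_i(gh)]$, which gives $\phi_i(g)^{-1}\phi_i'(\pi_i(g))\in U$ uniformly for $g\in A_i^n$; it then simply asserts that (I)--(V) and the second bullet transfer directly, whereas you spell out those transfers (your observation $\theta_i((A_i')^n)\subset A_i^{n+1}$ makes (IV) immediate), and your metric and compactness phrasing presumes $G$ metrizable, which the paper's left-invariant formulation avoids but which is harmless in the Lie setting where the lemma is used.
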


\begin{proof}
For $g \in \langle A_i \rangle$, we have $\theta_i (\pi_i (g)) = gh$ for some $h \in H_i$. Then
\[ 
   \phi_i' ( \pi_i (g) ) = [  \phi _i (g)  ] \ast [ \phi_i (h )  ] \ast  [  \phi_i(h )^{-1} \phi_i (g ) ^{-1} \phi _i (gh )  ]  .
\] 
This means that for any  $n \in \mathbb{N}$ and any identity neighborhood $U \subset G$,  there is $i_0 \in \mathbb{N}$ such that  for $i\geq i_0$,  any $g \in A_i ^n$ satisfies 
\[   \phi_i (g) ^{-1}  \phi_i ' ( \pi _i (g)  )    \in U  .   \]
Using this, all the properties of good approximations for the pairs $(\phi_i' , A_i')$ can be checked directly from the corresponding ones for the pairs $(\phi_i , A_i)$.

From the above observation, a sequence $g_i \in  A_i ^2$ satisfies $\phi_i (g_i) \to e$ if and only if $\phi_i' (\pi_i (g_i)) \to e$. The second part of the lemma follows easily from this fact. 
\end{proof}

\begin{proof}[Proof of Theorem \ref{thm:main-good} assuming Theorem \ref{thm:main-nss}:]   
Let $G_i$, $G$, $\phi_i$ be as in Theorem \ref{thm:main-good}. Let $H_i \leq G_i$ be the groups given by Theorem \ref{thm:largest-small} and $\phi'_i : G_i' : = \langle A_i \rangle / H_i \to G $ the induced good approximations given by Lemma \ref{lem:small-quotient}. We claim that the sequence $G_i' $ has the NSS property. To see this, consider a sequence $W_i \leq G_i'$ of small subgroups. By  the second part of Lemma \ref{lem:small-quotient}, the preimages $\tilde{W}_i \leq G_i$ are small, hence contained in $H_i$ for $i$ large enough. This implies that the quotients $W_i = \tilde{W}_i / H_i$ are trivial for $i$ large enough, proving our claim.

By Theorem \ref{thm:main-nss} applied to the good approximations $\phi_i' : G_i' \to G$, there is an ideal $\mathfrak{h} \trianglelefteq \mathfrak{g}$ with $\mathfrak{g} / \mathfrak{h}$ nilpotent and 
\[            \text{dim} (\mathfrak{h}) = \limsup_{i \to \infty} \text{dim} (G_i')    \leq \limsup_{i \to \infty } \text{dim} (G_i) .   \]
Moreover, if the sequence $G_i$ has the NSS property, then $H_i$ is trivial for $i$ large enough, and 
\[                 \text{dim} (\mathfrak{h}) = \limsup_{i \to \infty} \text{dim} (G_i')  =   \limsup_{i \to \infty } \text{dim} (G_i) .  \]
\end{proof}

\section{Convergence of one-parameter subgroups}\label{sec:ops-convergence}

Sections \ref{sec:ops-convergence} to \ref{sec:properties-of-h} contain the proof of Theorem \ref{thm:main-nss}.  Throughout, we assume its hypotheses.  Let $B\subset G$, $B_i\subset G_i $, and $C_0 $ be as in Theorem \ref{thm:gleason}. Further assume  $B$ is so small that the exponential map is a diffeomorphism from 
\[     \{ \vv \in \mathfrak{g} \, \vert \, \vert \vv \vert _B \leq 10 \}       \]
onto its image in $G$. In particular, $\overline{B}$ does not contain non-trivial subgroups,  square roots in $\overline{B}$ are unique, and 
\[    \log : \overline{B} \to \{  \vv \in \mathfrak{g} \, \vert \, \vert \vv \vert _B \leq 1 \}         \]
is well defined.

In this section we study how one-parameter subgroups of the groups $G_i$ converge to one-parameter subgroups of $G$. We then use this to give a precise estimate between the escape norms $\Vert \cdot \Vert _{B_i}$ and $\vert \cdot \vert _{B_i}$. We denote by $\mathfrak{g}_i$ the Lie algebra of $G_i$.

\begin{Def}[Convergence of one-parameter subgroups]
Given a sequence $\vv _i \in \mathfrak{g}_i$ with $\vert \vv _i \vert _{B_i} \leq 1 $ for each $i$, we say the sequence of one-parameter subgroups  $\gamma_i : \mathbb{R} \to G_i $ given by  
\[            \gamma_i (t) : =   \exp (t \vv _i )     \]
\emph{converges} to a one-parameter subgroup $\gamma : \mathbb{R} \to G$ if for any $T > 0 $ and any identity neighborhood $ U \subset G $, for $i$ large enough one has 
\[     \phi_i (  \gamma _i (t)   ) ^{-1} \gamma (t)     \in U    \]
for all $t \in [-T,T]$. 
\end{Def}

The following result provides a compactness property for one-parameter subgroups.

\begin{Pro}\label{pro:convergence-of-ops}
Let $\vv _i \in \mathfrak{g}_i$ be a sequence with $\vert \vv _i \vert _{B_i} = 1 $ for each $i$. Then after passing to a subsequence, the sequence of one-parameter subgroups $\gamma_i : \mathbb{R} \to G_i$ given by
\[            \gamma_i (t) :=   \exp (t \vv _i )     \] 
converges to a one-parameter subgroup $\gamma: \mathbb{R} \to G$ of the form 
\[      \gamma (t)  = \exp (t \vv )                      \]
 with $ \vert \vv \vert _B = 1$. 
\end{Pro}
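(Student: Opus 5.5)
The plan is to build $\gamma$ on a dense set of times, show it is additive there, extend it by continuity, and then identify its generator.

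\emph{Limits at dyadic times.} Since $\vert \vv_i \vert_{B_i} = 1$, the definition of $\tau_{B_i}$ gives $\gamma_i(t) \in B_i$ for all $t \in [0,1)$. By symmetry of $B_i$, also $\gamma_i(t) \in B_i$ for $\vert t \vert < 1$. For $\vert t \vert \leq T$ we have $\gamma_i(t) \in B_i^{\lceil T \rceil + 1}$. Lemma \ref{lem:convergent-sequences} then lets us extract, for each fixed $t$, a convergent subsequence of $\phi_i(\gamma_i(t))$. A diagonal argument over the countable set $D$ of dyadic rationals yields a subsequence and limits $\gamma(t) := \lim_i \phi_i(\gamma_i(t))$ for every $t \in D$. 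The almost homomorphism property \eqref{item:ga-3} applied to $\gamma_i(s)\gamma_i(t) = \gamma_i(s+t)$ gives $\gamma(s+t) = \gamma(s)\gamma(t)$ on $D$. Moreover $\gamma(t) \in \overline{B}$ for $\vert t \vert \leq 1$, using no expansion \eqref{item:ga-2} for the relevant regular neighborhood $B_i$ from Proposition \ref{pro:zoom-i}.

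\emph{Continuity and uniform convergence.} I expect this to be the main obstacle: one must rule out $\gamma$ being discontinuous, i.e.\ oscillating at small scales. Here I would use the escape norm together with the smallness of $B$. For $t = 2^{-k}$, every power $\gamma_i(t)^j$ with $j < 2^k$ lies in $B_i$. Passing to the limit, $\gamma(t)^j \in \overline{B}$ for $j \leq 2^k$.

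I would then argue that if an element $g$ with $g^j \in \overline{B}$ for all $j \leq N$ stays at definite distance from $e$ as $N \to \infty$, compactness produces a nontrivial subgroup inside $\overline{B}$. Since $\exp$ is a diffeomorphism onto $\{\vert \vv \vert_B \leq 10\}$, $\overline{B}$ contains no nontrivial subgroup, so this is impossible. Concretely, in log coordinates $\log \gamma(2^{-k})$ has $B$-escape norm at most about $2^{-k}$. Hence $\gamma(2^{-k}) \to e$, and more generally $\gamma(t) \to e$ as $t \to 0$ in $D$.

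The additive homomorphism on $D$ is therefore continuous at $0$, hence uniformly continuous, and it extends to a continuous one-parameter subgroup $\gamma(t) = \exp(t\vv)$.

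For uniform convergence on $[-T,T]$, approximate $t$ by a dyadic $s$ with $\vert t - s \vert < 2^{-k}$. Write $\gamma_i(t) = \gamma_i(s)\gamma_i(t-s)$. It then suffices to control $\phi_i(\gamma_i(u))$ for $\vert u \vert < 2^{-k}$, uniformly in $i$. This is the same escape argument run at the level of $G_i$: the powers $\gamma_i(u)^j$ stay in $B_i$ for $j \leq 2^k$. A sequence $u_i$ with $\phi_i(\gamma_i(u_i))$ staying away from $e$ would then produce in the limit a nontrivial element all of whose powers up to $2^k$ lie in $\overline{B}$, with $k$ arbitrary. That contradicts the first step, via a second diagonal argument using \eqref{item:ga-3}.

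\emph{Identifying $\vert \vv \vert_B = 1$.} For $\vert t \vert < 1$ we have $\gamma(t) \in \overline{B}$, so $\tau_B(\vv) \geq 1$. Since $\exp$ is a diffeomorphism on $\{\vert \cdot \vert_B \leq 10\}$, $\gamma(t) \in \overline{B}$ for all $t \in [0,1]$ forces $\vert \vv \vert_B \leq 1$.

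Conversely, by definition of $\tau_{B_i}$ there are times $t_i \to 1^+$ with $\gamma_i(t_i) \notin B_i$, and we may choose them with $\gamma_i(t_i) \in \partial B_i$. By Lemma \ref{lem:convergent-sequences}, after passing to a subsequence, $\phi_i(\gamma_i(t_i))$ converges to a point of $\partial B$. Uniform convergence then identifies this limit with $\gamma(1)$. So $\exp(\vv) \in \partial B$, which together with the previous bound gives $\vert \vv \vert_B = 1$ exactly.
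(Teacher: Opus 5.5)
Your argument is correct, but it is organized differently from the paper's. The paper passes to a subsequence only once, at time $1$. Since $\gamma_i(1)\in\partial B_i$, Lemma \ref{lem:convergent-sequences} gives $\phi_i(\gamma_i(1))\to g_\infty\in\partial B$. The paper then \emph{defines} $\vv:=\log(g_\infty)$ and $\gamma(t):=\exp(t\vv)$ from the start, so $\vert\vv\vert_B=1$ and the continuity of $\gamma$ come for free. Convergence at dyadic times then needs no diagonal argument: every subsequential limit of $\phi_i(\gamma_i(\tfrac12))$ is a square root of $\gamma(1)$ lying in $\overline B$, hence equals $\gamma(\tfrac12)$ by uniqueness of square roots in $\overline B$, and one inducts. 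Uniform convergence on $[-T,T]$ is proved as you propose, by dyadic approximation together with Lemma \ref{lem:small-norm-is-good}. That lemma is precisely what your escape argument at the level of $G_i$ reproves inline.

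Your route instead builds the limit intrinsically as a homomorphism on the dyadic rationals. It therefore has to pay for continuity at $0$ (your escape estimate $\vert\log\gamma(t)\vert_B\leq 2^{-k}$ for dyadic $\vert t\vert\leq 2^{-k}$) and for the extension to $\mathbb{R}$. In exchange, on the side of $G$ your construction of the limiting one-parameter subgroup uses only that $\overline B$ contains no nontrivial subgroup. The Lie structure enters only when you identify $\vert\vv\vert_B$.

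Two minor remarks. Because $B_i$ is open and $\tau_{B_i}(\vv_i)=1$, you already have $\gamma_i(\pm1)\in\partial B_i$. So you may take $t_i=1$ and, since $1$ is dyadic, identify the limit with $\gamma(1)$ without appealing to uniform convergence. Also, $\gamma(\pm1)\in\overline B$ is justified by this boundary fact together with the second part of Lemma \ref{lem:convergent-sequences}, not by \eqref{item:ga-2}.
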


For the proof of Proposition \ref{pro:convergence-of-ops} we require a preliminary result regarding the equicontinuity of the compositions $\phi_i \circ \exp : \mathfrak{g}_i \to G$.

\begin{Lem}\label{lem:small-norm-is-good}
    For any identity neighborhood $ W \subset  G$, there is $\delta > 0 $ such that for $i$ large enough one has 
    \[     \phi _i ( \exp (\ww ) )  \in W         \]
    for all $\ww \in \mathfrak{g}_i $ with $\vert \ww \vert _{B_i} < \delta$. 
\end{Lem}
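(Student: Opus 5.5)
Since $\vert \ww \vert_{B_i} < \delta$ means $\exp(t\ww) \in B_i$ for all $t\in[0,1/\delta)$, the plan is to use the escape norm and the almost-homomorphism property to show that $\phi_i(\exp(\ww))$ is an element of $G$ whose first $N\approx 1/\delta$ powers stay in a fixed compact set. In $G$, such elements must be close to the identity, because $\overline{B}$ contains no non-trivial subgroups.

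First, argue by contradiction. Suppose the statement fails for some identity neighborhood $W$. Then for each $k\in\mathbb{N}$ there are (after passing to a subsequence) indices $i_k\to\infty$ and $\ww_k\in\mathfrak{g}_{i_k}$ with $\vert \ww_k\vert_{B_{i_k}}<1/k$ but $\phi_{i_k}(\exp(\ww_k))\notin W$. Set $g_k:=\exp(\ww_k)\in B_{i_k}$. For every $j\le k$ we have $g_k^j=\exp(j\ww_k)\in B_{i_k}$. Since the $B_i$ are regular neighborhoods for $B$ (Proposition \ref{pro:zoom-i}), no expansion gives $\phi_{i_k}(g_k^j)\in B^2$, say, eventually. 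After a further subsequence, $\phi_{i_k}(g_k)\to g$, and $g\in\overline{B}\setminus W$ by Lemma \ref{lem:convergent-sequences}. In particular, $g\neq e$.

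Second, for each fixed $j$, the almost homomorphism property \eqref{item:ga-3} applies with $n=j$, because all partial products $g_k^m$, $m\le j$, lie in $B_{i_k}$. Hence $\phi_{i_k}(g_k^j)$ and $\phi_{i_k}(g_k)^j$ differ by an element tending to $e$, so $\phi_{i_k}(g_k^j)\to g^j$. No expansion then gives $g^j\in\overline{B}$ for every $j\ge0$, applying this for each fixed $j$ with $k\ge j$.

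Third, derive the contradiction. The set $\{g^j : j\ge 0\}$ lies in $\overline{B}$, so the closure of the monoid it generates is a compact subsemigroup of the Lie group $G$, and hence a compact group. That group is non-trivial (it contains $g\neq e$) and contained in $\overline{B}$, which contradicts the standing assumption that $\overline{B}$ contains no non-trivial subgroups. Alternatively, one can argue via $\log$: $\log g^j$ has $\vert\cdot\vert_B\le1$, and uniqueness of roots in $\overline{B}$ gives $\log g^j = j\log g$, which is unbounded unless $g=e$.

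The main obstacle is the second step: passing the power $j$ through $\phi_{i_k}$ only yields errors that vanish for each \emph{fixed} $j$. This is why the argument takes limits in $k$ for each fixed $j$ and only then uses all $j$ at the limit level, rather than trying to control $\phi_i(g^j)$ uniformly in $j$.
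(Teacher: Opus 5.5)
Your proof is correct and is essentially the paper's argument: take a limit point $g\neq e$ of $\phi_i(\exp(\ww_i))$ in $\overline{B}$, pass powers with fixed exponent to the limit via the almost homomorphism property, keep them in $\overline{B}$ by no expansion, and contradict the fact that $\overline{B}$ contains no non-trivial subgroups. The only difference is bookkeeping: the paper first uses compactness of $\overline{B}\setminus\inte(W)$ to find $m$ such that every such $g$ has a power $g^k\notin\overline{B}$ with $k\le m$, and takes $\delta=1/m$ explicitly, whereas you let $\delta=1/k\to 0$ and diagonalize (also, write $g\notin\inte(W)$ rather than $g\notin W$ unless $W$ is open; this still gives $g\neq e$).
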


\begin{proof}
    By compactness, there is $m \in \mathbb{N}$ such that for any $g \in B \backslash \inte ( W ) $, one has $g ^k \notin \overline{B} $ for some $k \leq m$.  Define $\delta : = \frac{1}{m}$.  If the conclusion of the lemma fails, after passing to a subsequence, there is a sequence $\ww_i \in \mathfrak{g}_i $ with $\vert \ww _i \vert _{B_i} < \delta $ but  
    \[ \phi_i (\exp (\ww _i ) ) \notin W . \]
    After again passing to a subsequence, we can assume the sequence $\phi _i (\exp (\ww_i)) $ converges to some $g \in B \backslash \inte (W)$. By our choice of $m$, there is $k \leq m$ with $g ^ k \notin \overline{B}$, and 
    \[       \phi_i (\exp ( k \ww _i ) )   \xrightarrow[i \to \infty]{}   g^k .                                        \]
    For $i$ large enough, this would imply $\exp (k \ww _i ) \notin B_i$ and $\vert \ww _i \vert_{B_i} \geq \frac{1}{k} \geq \delta$, which is a contradiction. \end{proof}

        \begin{proof}[Proof of Proposition \ref{pro:convergence-of-ops}]  
Throughout this proof, we denote $\phi_i (\gamma _i (t))$ by $\alpha_i (t)$.   Note that $\gamma_i (1) \in \partial B_i$ for all $i$,  so by Lemma \ref{lem:convergent-sequences}, after passing to a subsequence, we have 
\[     \alpha _i (1 ) \longrightarrow   g_{\infty}                              \]
for some  $g _{\infty} \in  \partial B$.  Let $\vv : = \log (g_{\infty})$.  Since $B$ is the exponential of a small symmetric convex subset of $\mathfrak{g}$, we have $\vert \vv \vert _B = 1$.  Define $\gamma : \mathbb{R} \to G$ by
\[        \gamma (t) : = \exp (t \vv ).           \]
    Notice that any subsequence of $\alpha _i ( \tfrac{1}{2} ) $ has a subsequence that converges to a square root of $\gamma (1) $ in $B$, which is necessarily  $\gamma (\frac{1}{2})$. This implies 
    \[  \alpha _i \left( \tfrac{1}{2} \right)   \longrightarrow  \gamma \left( \tfrac{1}{2}\right)  .   \]
    Arguing inductively, the sequence $\alpha _i \left( \frac{1}{2^k} \right)  $ converges to $\gamma \left( \frac{1}{2^k} \right)$ for all $k \in \mathbb{N}$. This implies that for any dyadic rational $q  $ one has 
    \[  \alpha _i ( q )  \longrightarrow  \gamma (q)  .   \]    
    Assuming the proposition fails, after passing to a subsequence, there is $T > 0 $, an identity neighborhood $U \subset  G$,  and a sequence $t_i \in [-T,T]$ such that 
    \[       \alpha _i ( t_i ) ^{-1} \gamma (t_i) \notin U     \]
    for all $i$.     After further passing to a subsequence, we can assume $t_i \rightarrow t$ for some $t \in [-T,T]$. Let $W \subset G $ be a symmetric identity neighborhood  with $W^5 \subset U$.  By Lemma \ref{lem:small-norm-is-good}, there is $\delta > 0 $ such that for $i$ large enough, 
        \[ \alpha _i  ( s  )  \in W \] 
    for all $s \in [- \delta , \delta ]$. Pick a dyadic rational $q $ with $\gamma (q) ^{-1} \gamma (t ) \in W$ and $\vert t- q \vert < \delta$. Then 
    \begin{align*}
         \alpha_i (  t_i  ) ^{-1} \gamma (t_i) \, = \,  [ \alpha _i  (t_i )  ^{-1} & \alpha_i (q) \alpha_i  (t_i - q)   ] \,  \ast \, [  \alpha _i ( t_i - q  ) ^{-1} ]\, \ast \,  [  \alpha _i (q ) ^{-1} \gamma (q)   ]  \\
         & \ast \, [  \gamma (q) ^{-1}  \gamma (t)  ] \, \ast \, [\gamma (t) ^{-1} \gamma (t_i)]   .
    \end{align*}
    For $i$ large enough each term on the right belongs to $W$, showing that $ \alpha_i (  t_i  ) ^{-1} \gamma (t_i)  \in U $; a contradiction.    \end{proof}

With Proposition \ref{pro:convergence-of-ops}, we can prove a form of continuity for the Lie algebra escape norms. 

\color{black}

\begin{Lem}\label{lem:lie-norm-continuity}
    Let $\ww_i \in \mathfrak{g}_i$ be a sequence with $\vert \ww _i \vert _{B_i} \leq 1$ for all $i$. Assume $\phi_i (\exp (\ww _ i )) \to \exp (\ww)$ for some $\ww \in \mathfrak{g} $ with $\vert \ww \vert _B \leq 1$. Then 
    \[  \lim_{i \to \infty}  \vert \ww _i \vert _{B_i} =  \vert \ww \vert _B .            \]
\end{Lem}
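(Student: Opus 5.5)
We must show that $\vert \ww_i \vert_{B_i} \to \vert \ww \vert_B$. The plan is to pass to subsequences and prove that every subsequential limit $\lambda$ of $\vert \ww_i\vert_{B_i}$ (which lies in $[0,1]$) equals $\vert \ww \vert_B$.

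\emph{Case $\lambda > 0$.} Set $\vv_i := \ww_i / \vert \ww_i \vert_{B_i}$, so that $\vert \vv_i \vert_{B_i} = 1$ by positive homogeneity. By Proposition \ref{pro:convergence-of-ops}, after passing to a further subsequence, $\exp(t\vv_i)$ converges to $\exp(t\vv)$ uniformly on compact time sets, with $\vert \vv \vert_B = 1$. Taking $t_i = \vert \ww_i\vert_{B_i} \to \lambda$, the definition of convergence of one-parameter subgroups gives
\[ \phi_i(\exp(\ww_i)) \to \exp(\lambda \vv). \]
Combined with the hypothesis, this yields $\exp(\lambda\vv) = \exp(\ww)$. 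Both $\lambda\vv$ and $\ww$ lie in $\{\vert\cdot\vert_B \le 1\}$, and $\exp$ is injective there by our standing choice of $B$. Hence $\ww = \lambda \vv$, and therefore $\vert \ww\vert_B = \lambda \vert\vv\vert_B = \lambda$.

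\emph{Case $\lambda = 0$.} We must show $\ww = 0$. Choose $m_i := \lfloor 1/(2\vert \ww_i\vert_{B_i}) \rfloor$, or $m_i$ large and arbitrary if $\ww_i = 0$; then $m_i \to \infty$.
\begin{itemize}
\item Using again Proposition \ref{pro:convergence-of-ops} on the normalized vectors $\vv_i$ (when $\ww_i \neq 0$), we get $\exp(k\ww_i) = \exp(k\vert\ww_i\vert_{B_i} \vv_i) \in B_i$ for all $k \le m_i$.
\item We also use the almost homomorphism property \eqref{item:ga-3} for bounded powers, noting that $\exp(\ww_i) \in B_i$ so its powers lie in $B_i^k$.
\item For each fixed $k$, this gives $\phi_i(\exp(k\ww_i)) \to \exp(\ww)^k$.
\item Since $\exp(k\ww_i) \in B_i$ for $k \le m_i$, the no expansion property \eqref{item:ga-2} forces $\exp(\ww)^k \in \overline{B}$ for all $k$.
\end{itemize}
Thus $\exp(\ww)$ generates a subgroup contained in $\overline{B}$. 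Since $\overline{B}$ contains no nontrivial subgroups, $\exp(\ww) = e$, so $\ww = 0$ and $\vert\ww\vert_B = 0 = \lambda$.

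The main subtlety is the $\lambda>0$ case. There we need the convergence of one-parameter subgroups to be uniform in $t$, so that it can be applied at the varying times $t_i$. The injectivity of $\exp$ on the $\vert\cdot\vert_B\le 1$ region then identifies the limit with $\ww$. Both ingredients are available from Proposition \ref{pro:convergence-of-ops} and the standing assumptions on $B$.
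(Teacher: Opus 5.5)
Your proof is correct, and your case $\lambda>0$ is exactly the paper's argument: normalize to $\vv_i=\ww_i/\vert\ww_i\vert_{B_i}$, apply Proposition \ref{pro:convergence-of-ops}, evaluate the uniform convergence at the times $\vert \ww_i\vert_{B_i}$, and conclude by injectivity of $\exp$ on $\{\vert\cdot\vert_B\le 1\}$. Your separate case $\lambda=0$ is valid but unnecessary, because the same argument applies verbatim there (it gives $\phi_i(\exp(\ww_i))\to\gamma(0)=e$, hence $\ww=0$), and this is how the paper handles it; also, in that case the fact that $\exp(k\ww_i)\in B_i$ for $k\le m_i$ comes directly from the definition of $\tau_{B_i}$, not from Proposition \ref{pro:convergence-of-ops}.
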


\begin{proof}
    Arguing by contradiction, we can assume the sequence  $\vert \ww _i \vert _{B_i}$ converges to a  number  $s \neq \vert \ww \vert _B$.   For each $i$, let  $\vv_i : = \tau_{B_i} (\ww _i ) \ww _i $ and define $\gamma _i : \mathbb{R} \to G_i$ by  $ \gamma _i (t) : = \exp (t \vv_i ) . $   By Proposition \ref{pro:convergence-of-ops}, after further passing to a subsequence, the sequence $\gamma_i$ converges to a one-parameter subgroup $\gamma : \mathbb{R} \to G$  of the form  $   \gamma (t)   = \exp (t \vv )  $  with $\vert \vv \vert _B = 1$. Then 
    \[ \gamma (s) = \lim_{i \to \infty} \phi _i (\gamma _ i (\vert \ww_i \vert _{B_i}) ) =  \lim_{i \to \infty}  \phi_i (\exp (\ww _i ))  = \exp (\ww) .  \] 
    Hence $  \ww  = s \vv $; a contradiction. 
\end{proof}

We can also use Proposition \ref{pro:convergence-of-ops} to give a more precise connection between the escape norms $\Vert \cdot \Vert _{B_i}$ and $\vert \cdot \vert _{B_i}$.

    \begin{Lem}\label{lem:norms-are-equal-if-small}
    For any $\varepsilon > 0 $ there is  $\delta > 0 $ such that for $i$ large enough, any $\ww \in  B_{\delta} (0) \subset \mathfrak{g}_i$ satisfies
    \begin{equation}\label{eq:escape-equals-norm-if-small}
        (1 - \varepsilon ) \vert \ww \vert _{B_i} \leq \Vert \exp (\ww) \Vert _{B_i} \leq \vert \ww \vert _{B_i}.
    \end{equation}
    \end{Lem}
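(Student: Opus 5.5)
The upper bound in \eqref{eq:escape-equals-norm-if-small} is exactly Proposition \ref{pro:norm-relation}, so only the lower bound needs proof. I will interpret the ball $B_\delta(0)\subset\mathfrak{g}_i$ via the norm $\Vert\cdot\Vert_i$ of Lemma \ref{lem:norm-approximates}. By \eqref{eq:norm-approximates}, smallness of $\Vert \ww\Vert_i$ is equivalent, up to the constant $2C_0$, to smallness of $\vert \ww\vert_{B_i}$. So it suffices to prove the lower bound for $\vert\ww\vert_{B_i}<\delta$.

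\emph{Setting up the contradiction.} I argue by contradiction. Suppose that for some $\varepsilon>0$, after passing to a subsequence, there are $\ww_i\in\mathfrak{g}_i$ with $s_i:=\vert\ww_i\vert_{B_i}\to 0$ and
\[ \Vert\exp(\ww_i)\Vert_{B_i}<(1-\varepsilon)s_i. \]
Unwinding the definition of the escape norm, there is $m_i\in\mathbb{N}$ with $\tfrac{1}{m_i+1}<(1-\varepsilon)s_i$ and $\exp(k\ww_i)\in B_i$ for all $0\le k\le m_i$. Hence
\[ m_is_i>\tfrac{1}{1-\varepsilon}-s_i, \]
which exceeds $1+\varepsilon/2$ for $i$ large.

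\emph{Passing to a limiting one-parameter subgroup.} Set $\vv_i:=\ww_i/s_i$, so that $\vert\vv_i\vert_{B_i}=1$, and let $\gamma_i(t):=\exp(t\vv_i)$. By Proposition \ref{pro:convergence-of-ops}, after passing to a further subsequence, $\gamma_i$ converges to $\gamma(t)=\exp(t\vv)$ with $\vert\vv\vert_B=1$, uniformly on $[0,2]$ in the sense of the definition. Fix $t\in[0,1+\varepsilon/2]$ and put $k_i:=\lfloor t/s_i\rfloor$. Then $k_i\le m_i$ for $i$ large, and $k_is_i\to t$. Therefore
\[ \gamma_i(k_is_i)=\exp(k_i\ww_i)\in B_i. \]
Uniform convergence, combined with continuity of $\gamma$, gives $\phi_i(\gamma_i(k_is_i))\to\gamma(t)$. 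The no expansion property \eqref{item:ga-2} (for the regular neighborhoods $B_i$, $B$ from Proposition \ref{pro:zoom-i}) then forces $\gamma(t)\in\overline{B}$. So $\exp(t\vv)\in\overline B$ for all $t\in[0,1+\varepsilon/2]$.

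\emph{Contradiction.} By the standing assumption, $\exp$ is a diffeomorphism from $\{\vert\cdot\vert_B\le 10\}$ onto its image, and $\log$ maps $\overline B$ into $\{\vert\cdot\vert_B\le1\}$. Taking $t=1+\varepsilon/4$, the element $\exp(t\vv)$ lies in $\overline B$, so its logarithm has $\vert\cdot\vert_B\le 1$. On the other hand, $t\vv$ is a preimage with $\vert t\vv\vert_B=t\in(1,10)$. Uniqueness of the logarithm in this region makes these two facts incompatible, which is the desired contradiction.

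\emph{Main obstacle.} The delicate point is this last step. It uses the facts that $B$ is the exponential of a convex ball, so that $\vert\cdot\vert_B$ is a genuine norm on the relevant region, and that $\exp$ is injective there; this is needed to rule out $\exp(t\vv)$ returning to $\overline B$ for $t$ slightly larger than $1$. A second point needing care is verifying the uniformity in $t$ of the convergence of $\gamma_i$, but that is exactly what Proposition \ref{pro:convergence-of-ops} provides.
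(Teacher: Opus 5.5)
Your proposal is correct and follows essentially the same route as the paper: argue by contradiction, rescale to $\vv_i=\tau_{B_i}(\ww_i)\ww_i$, pass to a limit one-parameter subgroup via Proposition \ref{pro:convergence-of-ops}, and apply the no expansion property \eqref{item:ga-2} to powers $\exp(k\ww_i)\in B_i$ with $k\,\vert\ww_i\vert_{B_i}\to t>1$. The differences are minor. You take the powers from the witness $m_i$ in the definition of the escape norm, whereas the paper chooses $m_i$ with $m_i/\tau_{B_i}(\ww_i)\to 1+\varepsilon/2$. You also spell out, via injectivity of $\exp$ on $\{\vert\cdot\vert_B\le 10\}$, why $\exp(t\vv)\notin\overline{B}$ for $t\in(1,10)$; the paper leaves this point implicit.
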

    \begin{proof} By \eqref{eq:escape-less-than-norm}, the second inequality always holds, so if \eqref{eq:escape-equals-norm-if-small} fails, after passing to a subsequence, there would be  $\ww _i \in \mathfrak{g}_i$ with $\vert \ww _i \vert _{B_i } \to 0$ and 
    \begin{equation}\label{eq:escape-not-equals-norm}
         \Vert \exp (\ww _i ) \Vert _{B_i} \leq (1 - \varepsilon) \vert \ww _i \vert _{B_i}.
    \end{equation}    
    For each $i$, let $\vv_i : =\tau _{B_i} (\ww_i) \ww _i $ and define  $\gamma _i : \mathbb{R} \to G_i$ by  $ \gamma _i (t) : = \exp (t \vv_i ) . $   By Proposition \ref{pro:convergence-of-ops}, after further passing to a subsequence, the sequence $\gamma_i$ converges to a one-parameter subgroup $\gamma : \mathbb{R} \to G$  of the form  $   \gamma (t)   = \exp (t \vv )  $  with $\vert \vv \vert _B = 1$.

    Since $\tau _{B_i} (\ww _i ) \to \infty$, we can choose $m_i\in \mathbb{N}$  so that $m_i / \tau_{B_i}  (\ww_i) \to 1 + \frac{\varepsilon}{2}$. Then by \eqref{eq:escape-not-equals-norm}, for $i$ large we have 
    \[             m_i  < (1 + \varepsilon) \tau _{B_i} (\ww_i)          \leq  (1 - \varepsilon) ^{-1}\vert \ww _i \vert _{B_i} ^{-1}   \leq  \Vert \exp (\ww _ i ) \Vert _{B_i} ^{-1} . 
        \]
    Hence  
    \[  \gamma _i (  m_i / \tau_{B_i}  (\ww_i) ) =   \exp (m_i \ww_i  ) = \exp (\ww _i ) ^{m_i}  \in B_i ,\] 
    but $\phi_i ( \gamma _i (  m_i / \tau _{B_i} (\ww_i) )  )$ converges to $\gamma ( 1 + \tfrac{\varepsilon}{2} ) \notin \overline{B}$. This contradicts \eqref{item:ga-2}. \end{proof}

\section{Construction of limit Lie algebra}\label{sec:h-construction}

We now resume the proof of  Theorem \ref{thm:main-nss}. In this section, we construct the ideal $\mathfrak{h} \trianglelefteq \mathfrak{g}$.  Let $B\subset G$, $B_i\subset G_i $, $C_0 $, and $\mathfrak{g}_i$ be as in the previous section. From now on, the metric that we use on  $\mathfrak{g}$ is the one induced by the norm $\vert \cdot \vert _B$, which by construction comes from an inner product.

By Theorem \ref{thm:nsz}, we have 
\[
        \dim (G) \geq \limsup _{i \to \infty} \dim (G_i)   ,  
\]
so after passing to a subsequence, we may assume $\dim (G_i)$ does not depend on $i$.  Let $\Vert \cdot \Vert _i  :   \mathfrak{g}_i  \to \mathbb{R}$ be the norms given by  Lemma \ref{lem:norm-approximates}. Since $\dim (G_i)$ does not depend on $i$,  by Theorem \ref{thm:john}  we can assume, after possibly updating $C_0$, that the norms $\Vert \cdot \Vert _i$ are given by inner products. 

We now bound the structure coefficients of the Lie algebras $\mathfrak{g}_i$,  enabling the use of Lemmas \ref{lem:bch-quant} and \ref{lem:lie-ift}.

\begin{Lem}
    For $i$ large enough, we have 
\begin{equation}\label{eq:enabler}
     \Vert [ \xx , \yy ] \Vert _i \leq      4 C_0^2 \Vert \xx \Vert _{i} \Vert \yy \Vert_{i}  
\end{equation}
for all $\xx, \yy \in \mathfrak{g}_i$. 
\end{Lem}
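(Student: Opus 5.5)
We want $\Vert[\xx,\yy]\Vert_i \le 4C_0^2\Vert\xx\Vert_i\Vert\yy\Vert_i$. Both sides are bilinear-homogeneous, so it suffices to treat $\xx,\yy$ small (say $\Vert\xx\Vert_i=\Vert\yy\Vert_i=s$ with $s\to 0$) and pass to the limit $s\to0$. The plan is to use the third Gleason inequality of Theorem \ref{thm:gleason}, $\Vert [g,h]\Vert_{B_i}\le C_0\Vert g\Vert_{B_i}\Vert h\Vert_{B_i}$ for $g,h\in B_i^{10}$, applied to $g=\exp(\xx)$, $h=\exp(\yy)$, and then convert escape norms back to $\Vert\cdot\Vert_i$ using Proposition \ref{pro:norm-relation}, Lemma \ref{lem:norms-are-equal-if-small}, and \eqref{eq:norm-approximates}.

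Concretely, for fixed $i$ large, take $\xx,\yy\in\mathfrak{g}_i$ with $\Vert\xx\Vert_i=\Vert\yy\Vert_i=1$ and $t>0$ small. Since $\vert\cdot\vert_{B_i}\le\Vert\cdot\Vert_i$, we have $\exp(t\xx),\exp(t\yy)\in B_i$, so by Proposition \ref{pro:norm-relation}
\[
\Vert[\exp(t\xx),\exp(t\yy)]\Vert_{B_i}\le C_0\,\vert t\xx\vert_{B_i}\vert t\yy\vert_{B_i}\le C_0t^2 .
\]
On the other hand, by the classical expansion $[\exp(t\xx),\exp(t\yy)]=\exp(t^2[\xx,\yy]+O(t^3))$ inside the fixed Lie group $G_i$, so writing $\ww_t:=\log[\exp(t\xx),\exp(t\yy)]$ we have $\ww_t/t^2\to[\xx,\yy]$. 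For $t$ small, $\ww_t$ lies in the ball $B_\delta(0)$ of Lemma \ref{lem:norms-are-equal-if-small} (with, say, $\varepsilon=1/2$), giving
\[
\tfrac12\vert\ww_t\vert_{B_i}\le\Vert\exp(\ww_t)\Vert_{B_i}\le C_0t^2,
\]
and then \eqref{eq:norm-approximates} gives $\Vert\ww_t\Vert_i\le 2C_0\vert\ww_t\vert_{B_i}\le 4C_0^2t^2$. Dividing by $t^2$ and letting $t\to0$, continuity of the norm $\Vert\cdot\Vert_i$ yields $\Vert[\xx,\yy]\Vert_i\le 4C_0^2$, which is the claim by homogeneity.

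The main subtlety is the use of Lemma \ref{lem:norms-are-equal-if-small}. Its ball $B_\delta(0)\subset\mathfrak{g}_i$ must be interpreted consistently with the norms (the lemma's proof really only uses $\vert\ww\vert_{B_i}$ small). Here this is harmless because for fixed $i$ we let $t\to0$, so $\vert\ww_t\vert_{B_i}\le\Vert\ww_t\Vert_i\to0$. The remaining checks are routine: that $\exp(t\xx)\in B_i^{10}$, which holds since $\tau_{B_i}(t\xx)\ge 1/t>1$, and that the threshold on $i$ coming from Theorem \ref{thm:gleason} and Lemma \ref{lem:norms-are-equal-if-small} is independent of $\xx,\yy,t$, which it is.
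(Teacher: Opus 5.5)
Your proposal is correct and follows the same chain of inequalities as the paper's proof. You apply the Gleason commutator bound of Theorem \ref{thm:gleason} to $\exp(t\xx),\exp(t\yy)$, Proposition \ref{pro:norm-relation} to the factors, Lemma \ref{lem:norms-are-equal-if-small} with $\varepsilon = 1/2$ to $\log [\exp(t\xx),\exp(t\yy)]$, and \eqref{eq:norm-approximates} twice, then let $t \to 0$. Your bookkeeping is slightly cleaner than the paper's: you bound at each fixed small $t$ and pass to the limit only through the genuine norm $\Vert \cdot \Vert_i$, whereas the paper writes limits of escape norms that need not exist.
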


\begin{proof}
    We directly compute 
\begin{align*}
    \Vert [ \xx , \yy ] \Vert _i  & =  \lim_{t \to 0}   \frac{1}{t^2} \Vert \log ( [  \exp (t\xx) , \exp (t \yy) ] ) \Vert _ i    \\
    & \leq 2 C_0 \lim_{t \to 0} \frac{1}{t^2} \vert \log ( [ \exp (t\xx) , \exp (t \yy) ])  \vert _{B_i} \\
    & \leq 4 C_0 \lim_{t \to 0} \frac{1}{t^2} \Vert [ \exp (t\xx) , \exp (t \yy)]  \Vert _{B_i}\\
    & \leq 4 C_0 ^2 \lim_{t \to 0} \frac{1}{t^2} \Vert   \exp (t\xx)  \Vert _{B_i} \Vert  \exp (t \yy)  \Vert _{B_i} \\
    & \leq  4 C_0^2 \vert \xx \vert _{B_i} \vert \yy \vert_{B_i} \\
    & \leq 4 C_0^2 \Vert \xx \Vert _{i} \Vert \yy \Vert_{i} , 
\end{align*}
where the second line follows from  \eqref{eq:norm-approximates}, the third one from Lemma \ref{lem:norms-are-equal-if-small}, the fourth one from Theorem \ref{thm:gleason}, the fifth one from \eqref{eq:escape-less-than-norm}, and the sixth again from \eqref{eq:norm-approximates}.     
\end{proof}

Let 
\[     \mathfrak{z}_i  : = \{  \xx \in  \mathfrak{g}_i \, \vert \,  \Vert \xx \Vert _i \leq  1   \}     \]
and 
\[    Z_i : = \exp (\mathfrak{z}_i) \subset G_i .   \]
Since $ \vert \cdot \vert _{B_i} \leq  \Vert \cdot \Vert _i$, we have $ Z_i \subset \overline{B}_i $, so after passing to a subsequence,  $\phi_i (Z_i)$ converges  in the Hausdorff topology to a closed subset $Z \subset \overline{B}$.  We will show that  after intersecting $Z $ with a small open identity neighborhood, it is a normal sub-local group of $G$, allowing us to define $\mathfrak{h}$ as its Lie algebra.

Let $M \geq 1$ be an upper bound of the operator norms of the adjoint operators $\Ad _a : \mathfrak{g} \to \mathfrak{g}$ as $a $ ranges over $\overline{B}$. Set
\[ r_0  : = \min \{  \tfrac{1}{256  C_0 ^3} , \tfrac{1}{4C_0 (M+1)} \}   ,\] 
 define 
\[     U   : = \exp ( \{    \vv   \in \mathfrak{g} \, \vert \, \vert  \vv  \vert _ B < r_0 \} )  ,       \]
and let $U_i \subset B_i$ be a sequence of open symmetric sets approximating $U$. Finally, set
\begin{align*}
       H_i  & : = Z_i \cap U_i  ,  \\
       H & : =  Z \cap U .
\end{align*}
Now we show that these sets are normal sub-local groups. 
\begin{Lem}\label{lem:h-i-normal}
    For $i$ large enough, $H_i $ is a normal sub-local group of $G_i$ with normalizing neighborhood $U_i$. 
\end{Lem}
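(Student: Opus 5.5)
We must verify three things for $i$ large: $U_i$ is open symmetric with $H_i \subset U_i$ closed in $U_i$; if $h_1,h_2 \in H_i$ and $h_1h_2 \in U_i$, then $h_1h_2 \in H_i$; and if $h \in H_i$, $a \in U_i$ and $aha^{-1}\in U_i$, then $aha^{-1} \in H_i$. Symmetry and closedness are immediate. Since $\mathfrak{z}_i$ is a closed symmetric ball and $\exp$ is continuous, $Z_i$ is compact and symmetric. Hence $H_i = Z_i \cap U_i$ is symmetric and relatively closed in $U_i$.

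The plan is to show that on $U_i$, membership in $Z_i$ is controlled by the norm $\Vert \cdot \Vert_i$ of a canonical logarithm. The key preliminary claim is that every $g \in U_i$ is of the form $\exp(\xx)$ with $\Vert \xx \Vert_i$ small.

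To get this, first note $\phi_i(U_i)$ lies in a small neighborhood of $\overline U$ for $i$ large, since $U_i$ approximates $U$. Combining Lemma \ref{lem:norm-approximates}, Lemma \ref{lem:norms-are-equal-if-small} and Theorem \ref{thm:gleason}, elements $g \in U_i$ have escape norm $\Vert g \Vert_{B_i} \lesssim r_0$. I would then argue that such $g$ lie on a one-parameter subgroup. The tentative approach is a connectedness/continuity argument along $\exp$ of the ball $\{\Vert \xx\Vert_i < \rho\}$, comparing with its image under $\phi_i$, which converges onto neighborhoods in $G$ by Lemma \ref{lem:lie-norm-continuity}. This would give $g=\exp(\xx)$ with $\Vert \xx\Vert_i \le 2C_0\vert\xx\vert_{B_i} \le c\, r_0 C_0 \ll 1$.

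This step is the main obstacle. Surjectivity of $\exp$ onto a neighborhood of definite size in $G_i$ does not follow formally from the norm bounds, because a priori $G_i$ could have elements of small escape norm off small one-parameter subgroups. I would try to rule this out using NSS together with Lemma \ref{lem:bch-quant}. That lemma, applied with $C = 4C_0^2$ via \eqref{eq:enabler}, shows that $\exp$ of the $\Vert\cdot\Vert_i$-ball of radius $1/(2C)$ is a diffeomorphism onto its image. The image is then a neighborhood whose escape norms are uniformly comparable, and it therefore must contain $U_i$ for $i$ large; otherwise a contradiction with \eqref{item:ga-4} and Lemma \ref{lem:convergent-sequences} would arise.

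Granting this, the product property follows. Take $h_1=\exp(\xx)$, $h_2=\exp(\yy)$ with $\Vert\xx\Vert_i,\Vert\yy\Vert_i \le 1$; by the preliminary claim we may take these logarithms in the small ball, of norm well below $1/(16C)$, since $r_0 \le 1/(256C_0^3)$. Lemma \ref{lem:bch-quant} then gives $h_1h_2=\exp(\zz)$ with $\Vert\zz\Vert_i<1/(2C)$. If moreover $h_1h_2 \in U_i$, uniqueness of logarithms on that ball (injectivity from Lemma \ref{lem:bch-quant}), together with the BCH estimate, yields $\Vert \zz\Vert_i \le 1$. Hence $h_1h_2 \in Z_i\cap U_i = H_i$.

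Normality follows similarly. Write $aha^{-1} = \exp(\Ad_a \xx)$. Then $\Vert \Ad_a\Vert_i \le e^{\Vert\ad_{\log a}\Vert} \le e^{C\Vert \log a\Vert_i}$, which is at most $2$ because $r_0 \le 1/(4C_0(M+1))$ makes $\log a$ tiny. So $\Ad_a\xx$ lies in the small injectivity ball. If $aha^{-1}\in U_i$, it then has a logarithm of norm $\le 1$, and therefore lies in $H_i$.
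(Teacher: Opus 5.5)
\paragraph{The gap.} Your argument rests on the preliminary claim that every $g\in U_i$ equals $\exp(\xx)$ with $\Vert\xx\Vert_i$ small. This claim is false in general, so the step you flag as the main obstacle cannot be completed.

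Example \ref{ex:bad-approximation} already refutes it. There $G_i=\mathbb{R}$, so NSS holds, $\exp$ is the identity, and $\phi_i(x)=(x,0)$ for $|x|<i^2/2$. Hence $\phi_i$ maps exponentials of any bounded ball onto the horizontal axis. On the other hand, since $U_i$ approximates the $2$-dimensional ball $U$, it contains points $x_i$ with $\phi_i(x_i)\to(0,c)$ for a fixed $c\neq 0$. This forces $x_i\approx c\,i^3$, and the only logarithm of $x_i$ has $\vert\cdot\vert_{B_i}$-norm tending to infinity. An even simpler case is $G_i=\mathbb{Z}$, $\phi_i(n)=n/i$ into $\mathbb{R}$, where $\mathfrak{g}_i=0$.

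Structurally, $\phi_i$ of small exponentials converges into $Z$, and $Z\cap U=H$ is typically a proper submanifold of $U$. The part of $U_i$ not reached by small exponentials is exactly what produces the nilpotent quotient $\mathfrak{g}/\mathfrak{h}$. So the contradiction with \eqref{item:ga-4} and Lemma \ref{lem:convergent-sequences} that you hope for does not exist.

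Separately, Lemma \ref{lem:bch-quant} only makes $\exp$ a local diffeomorphism on the ball, not injective. Injectivity is not needed anyway, since $\Vert\zz\Vert_i<1/(2C)<1$ already gives $\exp(\zz)\in Z_i$.

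\paragraph{Sub-local group property: easy fix.} Here the gap closes easily, because you only need small logarithms for elements of $H_i$, not of $U_i$. That is Lemma \ref{lem:useful}, proved by a limiting argument with Lemma \ref{lem:lie-norm-continuity}: any $\ww\in\mathfrak{z}_i$ with $\exp(\ww)\in H_i$ has $\vert\ww\vert_{B_i}<2r_0$. Hence $\Vert\ww\Vert_i<4C_0r_0\le 1/(64C_0^2)$, and Lemma \ref{lem:bch-quant} with $C=4C_0^2$ gives $H_i^2\subset Z_i$. This is the paper's argument.

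\paragraph{Normality: the gap is essential.} Here $a$ ranges over all of $U_i$, possibly in other components or far along one-parameter subgroups. So the bound $\Vert\Ad_a\Vert_i\le e^{C\Vert\log a\Vert_i}$ has no basis. Also, $M$ bounds $\Ad$ on $\overline{B}$ in the limit group $G$; it does not make $\log a$ small.

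The missing idea is to transfer the bound $M$ from $G$ back to $G_i$ by compactness. The paper proceeds as follows.
\begin{enumerate}
\item Use only $a_ih_ia_i^{-1}=\exp(\Ad_{a_i}\ww_i)$, which is valid for every $a_i$.
\item Suppose $\Vert\Ad_{a_i}\ww_i\Vert_i\ge 1$ along a sequence, and rescale by $\lambda_i=\vert\Ad_{a_i}\ww_i\vert_{B_i}\ge 1/(2C_0)$.
\item Pass to limits with Proposition \ref{pro:convergence-of-ops}, Lemma \ref{lem:lie-norm-continuity}, and the almost homomorphism property. This gives $\exp(\vv)=a\exp(\ww)a^{-1}=\exp(\Ad_a\ww)$ in $G$, with $a\in\overline{U}$, $\vert\vv\vert_B=1$ and $\vert\ww\vert_B\le 4C_0r_0$.
\item Then $\Vert\Ad_a\Vert\ge 1/(4C_0r_0)\ge M+1$, contradicting the definition of $M$. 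This is exactly where $r_0\le 1/(4C_0(M+1))$ is used.
\end{enumerate}
The conclusion is that $aha^{-1}\in Z_i$ for all $a\in U_i$ and $h\in H_i$ once $i$ is large.
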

\begin{Lem}\label{lem:h-normal}
    $H $ is a normal sub-local group of $G$ with normalizing neighborhood $U$. 
\end{Lem}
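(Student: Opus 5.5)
The plan is to deduce the lemma from Lemma \ref{lem:h-i-normal} by passing to the limit. The only inputs are:
\begin{itemize}
\item the Hausdorff convergence $\phi_i(Z_i)\to Z$, with $Z$ closed;
\item the fact that the sets $U_i$ approximate $U$;
\item the almost homomorphism and almost continuity properties of the maps $\phi_i$.
\end{itemize}
We must check four things: that $H$ is symmetric and contains $e$, that $H$ is closed in $U$, that $H$ is closed under products landing in $U$, and that $H$ is closed under conjugation by elements of $U$ landing in $U$.

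\textbf{Symmetry and closedness.} Since $Z$ is closed, $H=Z\cap U$ is closed in $U$. The identity is in $H$ because $e\in Z_i$ and $\phi_i(e)\to e$ by \eqref{item:ga-5}, so $e\in Z$, and also $e\in U$. For symmetry, note that $\mathfrak{z}_i$ is symmetric, hence so is $Z_i$, and $U$ is symmetric. Take $h\in H$ and pick $z_i\in Z_i$ with $\phi_i(z_i)\to h$. By \eqref{item:ga-3} applied to the product $z_i z_i^{-1}=e$, together with $\phi_i(e)\to e$, we get $\phi_i(z_i^{-1})\to h^{-1}$. Since $z_i^{-1}\in Z_i$ and $Z$ is closed, $h^{-1}\in Z$, so $h^{-1}\in H$.

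\textbf{Lifting elements of $H$ and $U$ into $U_i$.} Given $h\in H$, choose $z_i\in Z_i$ with $\phi_i(z_i)\to h$. Since $h\in U$ and $Z_i\subset \overline{B}_i$, the approximation property (applied to a compact neighbourhood of $h$ inside $U$) gives $z_i\in U_i$ for $i$ large, so $z_i\in H_i$. Similarly, any $a\in U$ is the limit of $\phi_i(a_i)$ for some $a_i\in U_i$, using Lemma \ref{lem:convergent-sequences} together with the almost surjectivity part of the definition of approximation.

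\textbf{Products.} Let $h_1,h_2\in H$ with $h_1h_2\in U$, and lift them to $z_{i,1},z_{i,2}\in H_i$ as above. These lie in $B_i$, so their product lies in $B_i^2$, and \eqref{item:ga-3} gives $\phi_i(z_{i,1}z_{i,2})\to h_1h_2$. Since $h_1h_2\in U$, the third approximation property applied to a compact neighbourhood of $h_1h_2$ in $U$ gives $z_{i,1}z_{i,2}\in U_i$ for large $i$. Lemma \ref{lem:h-i-normal} then yields $z_{i,1}z_{i,2}\in H_i\subset Z_i$. Passing to the limit and using that $Z$ is closed, $h_1h_2\in Z\cap U=H$.

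\textbf{Conjugation.} Let $h\in H$ and $a\in U$ with $aha^{-1}\in U$. Lift them to $z_i\in H_i$ and $a_i\in U_i$. Then $a_iz_ia_i^{-1}\in B_i^3$, and $\phi_i(a_iz_ia_i^{-1})\to aha^{-1}$ by \eqref{item:ga-3}. Exactly as before, $a_iz_ia_i^{-1}\in U_i$ for large $i$, so normality of $H_i$ with normalizing neighbourhood $U_i$ (Lemma \ref{lem:h-i-normal}) gives $a_iz_ia_i^{-1}\in H_i\subset Z_i$. Taking limits, $aha^{-1}\in Z\cap U=H$.

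\textbf{Main obstacle.} The only delicate point is making sure the lifted elements and their products actually lie in $U_i$, so that Lemma \ref{lem:h-i-normal} applies. This requires the limits to lie in the open set $U$ and not merely in $\overline{U}$. It is handled by applying the third approximation property to a compact neighbourhood of the limit point inside $U$, which is legitimate because all the relevant products lie in a bounded power $B_i^3\subset A_i^8$.
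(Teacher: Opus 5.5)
Your proof is correct and follows the same skeleton as the paper's: lift $h_1,h_2,h$ to $H_i$ and $a$ to $U_i$ (as in Lemma \ref{lem:h-i-to-h}), apply the finite-$i$ result, and pass to the limit using that $Z$ is closed. The only difference is that the paper does not use the statement of Lemma \ref{lem:h-i-normal} but the inclusions \eqref{eq:h-i-2-in-z-i} and \eqref{eq:conjugate-in-z-i} from its proof, which place $h_{i,1}h_{i,2}$ and $a_ih_ia_i^{-1}$ in $Z_i$ without first knowing they lie in $U_i$; so your extra step forcing these products into $U_i$ via the approximation property is not needed on that route, while your explicit check that $H$ is symmetric fills in a detail the paper leaves implicit.
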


To prove Lemmas \ref{lem:h-i-normal} and \ref{lem:h-normal}, we require a  couple of preliminary results.

\begin{Lem}\label{lem:useful}
    Let $z _i \in Z_i$ and $\ww _i \in \mathfrak{z}_i$ be such that $\vert \ww _i \vert _{B_i} \leq 1$ and $\exp (\ww _i ) = z_i$ for each $i$. 
    \begin{itemize}
        \item If $\vert \ww _i \vert _{B_i}  < \tfrac{r_0}{2} $ for $i$ large enough, then $z _i \in H_i$ for $i$ large enough. 
        \item If $z_i \in H_i$ for $i$ large enough, then $\vert \ww _i \vert _{B_i} < 2 r_0 $ for $i$ large enough.  
    \end{itemize}
    In particular, 
    \begin{equation}\label{eq:h-i-norm-compatible}
        \exp ( \{ \vv \in \mathfrak{z}_i \, \vert \, \vert \vv \vert _{B_i} < \tfrac{r_0}{2} \} ) \subset H_i \subset \exp ( \{ \vv \in \mathfrak{z}_i \, \vert \, \vert \vv \vert _{B_i} < 2 r_0 \} )
    \end{equation}
    for $i$ large enough. 
\end{Lem}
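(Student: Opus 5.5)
Both bullets go by contradiction: pass to a subsequence and compare $\phi_i(z_i)$ with its limit in $G$ via Lemma \ref{lem:lie-norm-continuity}. The sets $U_i$ approximate $U=\exp(\{\vv \mid \vert\vv\vert_B<r_0\})$. So membership $z_i\in U_i$ is controlled, for large $i$, by where $\phi_i(z_i)$ accumulates. Two facts from the definition of approximation are used. If the limit lies in a compact subset of $U$, the no compression property of the approximating sets gives $z_i\in U_i$. If $z_i\in U_i$, every accumulation point lies in $\overline{U}$. Since $z_i\in Z_i$ is automatic, $z_i\in H_i=Z_i\cap U_i$ is equivalent to $z_i\in U_i$.

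\textbf{First bullet.} Suppose $\vert\ww_i\vert_{B_i}<r_0/2$ for $i$ large, but $z_i\notin U_i$ along a subsequence. Since $z_i\in \overline{B_i}$, Lemma \ref{lem:convergent-sequences} lets us pass to a further subsequence with $\phi_i(z_i)\to g\in\overline{B}$. Write $g=\exp(\ww)$ with $\vert\ww\vert_B\le 1$, which is possible by the choice of $B$ (exp is a diffeomorphism there, and $\log$ is defined on $\overline{B}$). Lemma \ref{lem:lie-norm-continuity} gives
\[ \vert\ww\vert_B=\lim_i\vert\ww_i\vert_{B_i}\le r_0/2<r_0 . \]
Hence $g$ lies in the compact set $\exp(\{\vert\vv\vert_B\le r_0/2\})\subset U$. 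We have $z_i\in \overline{B_i}\subset A_i^8$, or $z_i\in A_i^2$ after a harmless adjustment using the continuity of $\exp$. The no compression clause for the approximating sets $U_i$ then gives $z_i\in U_i$ for large $i$, a contradiction.

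\textbf{Second bullet.} Suppose $z_i\in H_i\subset U_i$ for large $i$, but $\vert\ww_i\vert_{B_i}\ge 2r_0$ along a subsequence. Passing to a subsequence, $\phi_i(z_i)\to g\in\overline{U}=\exp(\{\vert\vv\vert_B\le r_0\})$. Here we use the second property of approximating sets with shrinking neighborhoods of $\overline{U}$. Write $g=\exp(\ww)$ with $\vert\ww\vert_B\le r_0\le 1$. Lemma \ref{lem:lie-norm-continuity} then gives
\[ \lim_i\vert\ww_i\vert_{B_i}=\vert\ww\vert_B\le r_0<2r_0 , \]
again a contradiction.

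\textbf{Displayed inclusions.} The statement \eqref{eq:h-i-norm-compatible} is a uniform version of the two bullets. Suppose the left inclusion fails for infinitely many $i$. Pick witnesses $\ww_i\in\mathfrak{z}_i$ with $\vert\ww_i\vert_{B_i}<r_0/2$ but $\exp(\ww_i)\notin H_i$, and apply the first bullet along that subsequence. Since that argument only used the subsequence, this is legitimate. The right inclusion is handled the same way with the second bullet. One needs $\vert\ww_i\vert_{B_i}\le 1$, which follows from $\vert\cdot\vert_{B_i}\le\Vert\cdot\Vert_i\le1$ on $\mathfrak{z}_i$ by \eqref{eq:norm-approximates}.

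\textbf{Main obstacle.} The delicate point is the representation $z_i=\exp(\ww_i)$ in the second inclusion. An element of $H_i$ might a priori have several logarithms in $\mathfrak{z}_i$, so the bullet must hold for every choice, and the argument above indeed applies to an arbitrary choice. A second point to check is that the no compression property for the $U_i$ applies, which requires $z_i\in A_i^8$. This holds since $Z_i\subset\overline{B_i}\subset A_i^2$.
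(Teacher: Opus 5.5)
Your proof is correct and follows essentially the same route as the paper. You argue by contradiction along subsequences, identify the limit of $\phi_i(z_i)$ as $\exp(\ww)$ with $\vert \ww\vert_B\le 1$, transfer the escape-norm bound via Lemma \ref{lem:lie-norm-continuity}, and conclude with no compression for the $U_i$ (first bullet) or no expansion (second bullet).

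One small fix is needed in the first bullet. The no compression clause needs a compact set that contains $\phi_i(z_i)$ for large $i$, not just the limit $g$. So use a compact neighborhood of $g$ inside $U$, e.g.\ $\exp(\{\vert\vv\vert_B\le 3r_0/4\})$, instead of $\exp(\{\vert\vv\vert_B\le r_0/2\})$.
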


\begin{proof}
Assuming the first claim fails, after passing to a subsequence, we have $\vert \ww _i \vert _{B_i} < \tfrac{r_0}{2}$ but $z_i \notin U_i$ for all $i$. After further passing to a subsequence, we have $\phi_i (z_i) \to z $ for some $ z \in Z $. By Lemma \ref{lem:lie-norm-continuity}, $z = \exp(\ww )$ for some $\ww \in \mathfrak{g}$ with $\vert \ww \vert _B \leq \tfrac{r_0}{2}$. Hence  $z \in U$. Since the sets $U_i$ approximate $U$, this implies  $z_i \in U_i$ for $i$ large enough; a contradiction.

    Assuming the second claim fails, after passing to a subsequence, we have $z_i \in H_i$ but  $\vert \ww_i \vert _{B_i} \geq 2 r_0$ for all $i$. After further passing to a subsequence, we have $\phi_i (z_i) \to z $ for some $z \in \overline{U}$. By Lemma \ref{lem:lie-norm-continuity}, $z = \exp(\ww )$ for some $\ww \in \mathfrak{g}$ with $\vert \ww \vert _B \geq 2r_0$. This implies $z \notin \overline{U}$; a contradiction.  
\end{proof}

\begin{Lem}\label{lem:h-i-to-h}
    The sequence $\phi_i  (H_i)$ converges in the Hausdorff topology to $H$. 
\end{Lem}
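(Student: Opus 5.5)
We need to show $\phi_i(H_i)\to H = Z\cap U$ in the Hausdorff topology. All sets lie in $\overline{B}$, which is compact, so by the criterion in the Hausdorff convergence subsection it suffices to check two things. First, every point of $H$ is a limit of points $\phi_i(h_i)$ with $h_i\in H_i$. Second, every accumulation point of a sequence $\phi_i(h_i)$ with $h_i\in H_i$ lies in $\overline{H}$.

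\textbf{Accumulation points.} Let $h_i\in H_i=Z_i\cap U_i$ and pass to a subsequence with $\phi_i(h_i)\to g$. Since $h_i\in Z_i$ and $\phi_i(Z_i)\to Z$ with $Z$ closed, we get $g\in Z$. Since $h_i\in U_i$ and the $U_i$ approximate $U$, we get $g\in\overline{U}$. So $g\in Z\cap \overline U$, and we must show $g\in\overline{Z\cap U}$.

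Write $h_i=\exp(\ww_i)$ with $\ww_i\in\mathfrak z_i$. Lemma \ref{lem:useful} gives $\vert\ww_i\vert_{B_i}<2r_0\le 1$. By Lemma \ref{lem:lie-norm-continuity}, $g=\exp(\ww)$ with $\vert\ww\vert_B=\lim\vert\ww_i\vert_{B_i}$, and $\vert\ww\vert_B\le r_0$ because $g\in\overline U$.

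Now consider the scaled elements $\exp(s\ww_i)$ for $s\in(0,1)$. These lie in $Z_i$ because $\mathfrak z_i$ is star-shaped, being a norm ball. By the convergence of one-parameter subgroups (Proposition \ref{pro:convergence-of-ops}, applied after normalizing $\ww_i$, or via Lemma \ref{lem:small-norm-is-good} if $\ww_i\to0$), $\phi_i(\exp(s\ww_i))\to\exp(s\ww)$. Hence $\exp(s\ww)\in Z$. Its norm satisfies $\vert s\ww\vert_B<r_0$, so $\exp(s\ww)\in U$, and therefore $\exp(s\ww)\in H$. Letting $s\to1$ shows $g\in\overline H$.

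\textbf{Approximating points of $H$.} Take $z\in H$. Since $z\in Z$, there are $z_i\in Z_i$ with $\phi_i(z_i)\to z$. Since $z\in U$ and $U$ is open, $z$ has a compact neighborhood $K\subset U$, and $\phi_i(z_i)\in K$ for large $i$. The $U_i$ approximate $U$, so $\phi_i^{-1}(K)\cap A_i^8\subset U_i$ for large $i$, and $z_i\in Z_i\subset\overline{B_i}$ lies in $A_i^8$. Hence $z_i\in U_i$, so $z_i\in H_i$ for large $i$.

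\textbf{Subsequences.} The convergence $\phi_i(Z_i)\to Z$ was only obtained along a subsequence, and we use that same subsequence throughout.

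\textbf{Main obstacle.} The delicate step is the boundary case $g\in Z\cap\partial U$, which the first part handles by the rescaling argument. Making that rigorous requires checking that $\phi_i(\exp(s\ww_i))\to\exp(s\ww)$. I would do this by writing $\ww_i=\vert\ww_i\vert_{B_i}\vv_i$ with $\vert\vv_i\vert_{B_i}=1$ and applying Proposition \ref{pro:convergence-of-ops} to the $\vv_i$. Uniqueness of logarithms in $\overline B$ then identifies the limiting direction with $\ww/\vert\ww\vert_B$. The degenerate case $\ww=0$ is trivial, since then $g=e\in H$.
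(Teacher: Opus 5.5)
Your proposal is correct and has the same skeleton as the paper's proof. You check the two conditions of the Hausdorff-convergence criterion, and you approximate points of $H$ exactly as the paper does: pick $z_i\in Z_i$ with $\phi_i(z_i)\to z$, then use that the $U_i$ approximate $U$ to get $z_i\in U_i$.

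The difference is in the accumulation-point half. The paper just writes that accumulation points lie in $\overline{Z}\cap\overline{U}=\overline{Z\cap U}$ and treats the last equality as evident. For a closed $Z$ and open $U$ one only has $\overline{Z\cap U}\subset Z\cap\overline{U}$ in general, since a point of $Z\cap\partial U$ need not be approachable from $Z\cap U$. So some property specific to $Z$ is needed, and your rescaling argument supplies it. Because each $\mathfrak z_i$ is a norm ball, Proposition \ref{pro:convergence-of-ops} together with uniqueness of logarithms in $\overline B$ shows that $\log(Z\cap\overline U)$ is star-shaped about $0$. Hence every $g=\exp(\ww)\in Z\cap\overline U$ is the limit of points $\exp(s\ww)\in Z\cap U$ as $s\to 1$. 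This makes your version more complete than the paper's, at the cost of a few extra lines.

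A few small points to tidy up:
\begin{itemize}
\item Lemma \ref{lem:useful} is not needed. The bound $\vert\ww_i\vert_{B_i}\le\Vert\ww_i\Vert_i\le 1$ already gives the hypothesis of Lemma \ref{lem:lie-norm-continuity}.
\item Get the order right: first obtain $g=\exp(\ww)$ with $\vert\ww\vert_B\le r_0$ from $g\in\overline U$, and only then invoke Lemma \ref{lem:lie-norm-continuity}.
\item The regular neighborhoods here are $B_i$ and $B$, not $A_i$. The inclusion you need is $Z_i\subset\overline{B_i}\subset B_i^2\subset B_i^8$.
\item $\phi_i(H_i)$ lies in a fixed compact neighborhood of $\overline U$ only for $i$ large, by the no-expansion property of the $U_i$. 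This is enough for the criterion.
\end{itemize}
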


\begin{proof}
For each $h \in H = Z \cap U$, there are $z _i \in Z_i $ with $\phi_i (z_i) \to h$. Since the sets $U_i$ approximate $U$, we have $z_i \in U_i$ for $i$ large enough. This implies $z_i \in H_i$ for $i$ large enough. On the other hand, for any sequence $h_i \in H_i  = Z_i \cap U_i $, any accumulation point of the sequence $\phi_i (h_i)$ belongs to $\overline{Z} \cap \overline{U} = \overline{Z \cap U} = \overline{H}$. 
\end{proof}

\begin{proof}[Proof of Lemma \ref{lem:h-i-normal}]
By \eqref{eq:h-i-norm-compatible} and  \eqref{eq:norm-approximates}, for $i$ large enough we have
    \[ 
           H_i  \subset \exp (   \{  \vv \in \mathfrak{z}_i \, \vert \, \Vert \vv \Vert _i \leq 4 C_0 r_0   \}  )   .   
    \]
    Recall that $    4C_0 r_0 \leq  \frac{1}{64 C _0^2}  ,  $   so by \eqref{eq:enabler} and  Lemma \ref{lem:bch-quant} with $C= 4 C_0^2$, we have  
    \begin{equation}\label{eq:h-i-2-in-z-i}
     H_i  ^2 \subset Z_i .    
    \end{equation}
    This shows that $H_i \subset G_i$ is a sub-local group with associated neighborhood $U_i$ for $i$ large enough.

    To prove that $U_i$ is  also a normalizing neighborhood, consider sequences $a_i \in U_i$ and $h _i \in H_i$.   We claim that 
    \begin{equation}\label{eq:conjugate-in-z-i}
        a_i h_i a_i ^{-1}  \in Z_i
    \end{equation}
    for $i$ large enough. 

     By  \eqref{eq:h-i-norm-compatible}, for $i$ large enough there are $\ww _i \in \mathfrak{g}_i$ with $\exp (\ww _i) = h_i$ and  
\begin{equation}\label{eq:w-i-norm}
          \vert \ww _i \vert _{B_i}< 2 r_0  .     
\end{equation}    
    Notice that $a_ih_i a^{-1} = \exp ( \Ad _{a_i} \ww _i ) $, so if \eqref{eq:conjugate-in-z-i} fails,  after passing to a subsequence, we have
    \[    \Vert \Ad _{a_i} \ww _i \Vert_i \geq 1     \]
    for all $i$.  Then define
    \begin{equation}\label{eq:lambda-i-bound}
             \lambda_i : = \vert \Ad _{a_i} \ww _i \vert _{B_i} \geq \tfrac{1}{2C_0} \Vert \Ad _{a_i} \ww _i \Vert _i \geq \tfrac{1}{2C_0}     
    \end{equation}
    and 
    \[   \vv _i : = \Ad _{a_i} \ww _i / \lambda_i    .       \]
    After passing to a subsequence, we have $\phi_i (a_i) \xrightarrow[i \to \infty]{} a$ for some $a\in \overline{U}$ and 
    \begin{align}
        \phi_i (\exp (\vv _i )) &\xrightarrow[i \to \infty ]{} \exp (\vv ) , \label{eq:vvv}   \\
            \phi_i ( \exp ( \ww_i / \lambda_i )   ) &\xrightarrow[i \to \infty ]{} \exp (  \ww  )   , \label{eq:ww-prime}     
    \end{align}
    for some $\vv, \ww  \in \mathfrak{g}$ with
    \[ \hspace{0.5cm} \vert \vv \vert _B = 1, \hspace{2cm} \vert \ww  \vert _B \leq 4 C_0 r_0 ,\]
    where the last inequality follows from \eqref{eq:w-i-norm} and \eqref{eq:lambda-i-bound}.    
    
    By the definition of $\vv_i$, we also have
    \begin{equation}\label{eq:adjoint-i}
            \exp ( \vv _i ) =   a_i \exp (\ww _i / \lambda_ i ) a_i ^{-1}   ,      
    \end{equation}
    so combining \eqref{eq:vvv}, \eqref{eq:ww-prime}, and \eqref{eq:adjoint-i} we get
    \begin{align*}
     \exp (\vv) & = \lim _{i \to \infty } \phi_i (\exp (\vv _ i )) \\ 
     & = \lim_{i \to \infty } \phi_i (   a_i \exp (\ww _i / \lambda _i ) a_i^{-1}   ) \\
     & = a \exp (\ww  ) a^{-1} \\
     & = \exp ( \Ad _a (\ww ) ).
    \end{align*}
    This implies
    \[    \Vert \Ad _a \Vert \geq \frac{ \vert \vv \vert  _B }{ \vert \ww  \vert _B  } \geq \frac{1}{4C_0 r_0 } \geq M +1 ,                     \]
    contradicting our choice of $M$. This proves \eqref{eq:conjugate-in-z-i}.    Since $a_i \in U_i$ and $h_i \in H_i$ were arbitrary, \eqref{eq:conjugate-in-z-i} implies that $U_i$ is a normalizing neighborhood of $H_i$ for $i$ large enough. 
\end{proof}

\begin{proof}[Proof of Lemma \ref{lem:h-normal}]
    Fix $h_1, h_2 \in H$. By Lemma \ref{lem:h-i-to-h}, there are $h_{i,1}, h_{i,2} \in H_i$ with 
    \[ \phi_i (h_{i,j}) \xrightarrow[i \to \infty]{} h_j \] 
    for $j \in \{ 1, 2\}$. By \eqref{eq:h-i-2-in-z-i} we have
    \[   h_{i,1} h_{i,2} \in Z_i         \]
    for $i$ large enough. This implies $h_1h_2 \in Z$. Since $h_1, h_2 \in H$ were arbitrary, this shows that $H$ is a  sub-local group with associated neighborhood $U$. 
    
    To prove that $U$ is also a normalizing neighborhood, pick $a \in U$ and $h \in H$. By Lemma \ref{lem:h-i-to-h}, there are  $a_i \in U_i $ and $h _i \in Z_i $ with $\phi_i (a_i) \to a$ and $\phi_i (h_i ) \to h$. From   \eqref{eq:conjugate-in-z-i},  we deduce that $a h a^{-1} \in Z$. Since $a \in U$ and $h\in H$ were arbitrary, this shows that $U$ is a normalizing neighborhood of $H$. 
\end{proof}

We can then define 
\begin{equation}\label{eq:h-definition}
   \mathfrak{h} : = \bigcup_{\varepsilon > 0 } \{ \vv \in \mathfrak{g} \, \vert \, \exp (t \vv) \in H  \text{ for all } t \in ( - \varepsilon , \varepsilon)   \} .        
\end{equation}
Since $H$ is a normal sub-local group of $G$, Proposition \ref{pro:lie-local} implies that  $\mathfrak{h}$ is an ideal of $\mathfrak{g}$ and $H \subset G$ is a smooth submanifold of dimension $\dim (\mathfrak{h})$.

\section{Properties of limit Lie algebra}\label{sec:properties-of-h}

In this section we finish the proof of Theorem \ref{thm:main-nss}.   Let $B\subset G$, $B_i\subset G_i $, $C_0$, $\mathfrak{g}_i$, $\Vert \cdot \Vert _i$, $\mathfrak{z}_i$, $Z_i$, $Z$, $r_0$, $U_i$, $U$, $H_i$, $H$,  and  $\mathfrak{h}$ be as in the previous section.  We aim to show that   $\dim (\mathfrak{h}) = \dim (G_i)$ and that $\mathfrak{g}/\mathfrak{h}$ is nilpotent.  We begin with one of the desired inequalities.

\begin{Lem}\label{lem:first-half}
    $\dim (\mathfrak{h}) \leq  \dim (G_i)$. 
\end{Lem}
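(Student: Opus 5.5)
The plan is to compare $H$ with the sets $H_i$ through the NSS version of dimension semicontinuity, applied to local groups. By Lemmas \ref{lem:h-i-normal} and \ref{lem:h-normal}, $H_i \subset G_i$ and $H \subset G$ are sub-local groups. By Proposition \ref{pro:lie-local}, $H$ is a smooth submanifold of dimension $\dim(\mathfrak{h})$; it is therefore a local Lie group with the structure inherited from $G$. Each $H_i$ is an open piece of $\exp(\mathfrak{z}_i)$ near the identity, hence a local Lie group of dimension $\dim(G_i)$. I then aim to show that the restrictions $\phi_i|_{H_i} : H_i \to H$ (composed with a nearest-point correction into $H$) are local good approximations and have the NSS property. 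Once this is done, Theorem \ref{thm:nsz-local} gives $\dim(\mathfrak{h}) = \dim(H) \geq \dim(G_i)$. That is the opposite inequality.

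For the stated inequality $\dim(\mathfrak{h}) \leq \dim(G_i)$, I would instead bound $H$ from above by an image of an $n$-dimensional parameter space, with $n = \dim(G_i)$. Fix orthonormal bases $\vv^i_1,\ldots,\vv^i_n$ of $(\mathfrak{g}_i, \Vert\cdot\Vert_i)$. By \eqref{eq:enabler}, the structure constants are uniformly bounded with $C = 4C_0^2$. Lemma \ref{lem:lie-ift} then gives $\delta>0$, independent of $i$, such that every element of $\exp(\{\Vert \ww\Vert_i<\delta\})$ can be written as $\exp(t_1\vv^i_1)\cdots\exp(t_n\vv^i_n)$ with $|t_j|<\varepsilon$.

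Next I pass to a subsequence using Proposition \ref{pro:convergence-of-ops}. Since $\vert\vv^i_j\vert_{B_i}\in[1/2C_0,1]$, after rescaling the one-parameter subgroups $t\mapsto \exp(t\vv^i_j)$ converge uniformly on compact time intervals to $t\mapsto\exp(t\vv_j)$ for some $\vv_j\in\mathfrak{g}$. Combining this with the almost homomorphism property \eqref{item:ga-3}, I get uniform convergence
\[
\phi_i\bigl(\exp(t_1\vv^i_1)\cdots\exp(t_n\vv^i_n)\bigr) \;\longrightarrow\; \exp(t_1\vv_1)\cdots\exp(t_n\vv_n)
\]
for $t\in[-\varepsilon,\varepsilon]^n$. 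Hence every point of $H$ close enough to $e$ lies in the image of the smooth map $F:[-\varepsilon,\varepsilon]^n\to G$, $F(t)=\exp(t_1\vv_1)\cdots\exp(t_n\vv_n)$. To justify this, take $h\in H$ near $e$ and $h_i\in H_i$ with $\phi_i(h_i)\to h$ (Lemma \ref{lem:h-i-to-h}). Lemma \ref{lem:lie-norm-continuity} together with \eqref{eq:h-i-norm-compatible} shows that $h_i=\exp(\ww_i)$ with $\Vert\ww_i\Vert_i<\delta$ once $h$ is close to $e$. Lemma \ref{lem:lie-ift} then writes $h_i$ in product form with parameters $t^i$; a limit point $t$ of the $t^i$ gives $F(t)=h$.

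The smooth image of an $n$-cube has Hausdorff (hence topological) dimension at most $n$. Therefore a neighborhood of $e$ in the submanifold $H$ has dimension at most $n$, which gives $\dim(\mathfrak{h})\leq\dim(G_i)$.

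The main obstacle is making sure the smallness radius needed near $e$ in $H$ matches the $\delta$ from Lemma \ref{lem:lie-ift}. Concretely, this means translating the condition $\vert\log h\vert_B$ small into $\Vert\ww_i\Vert_i<\delta$. I would handle this with \eqref{eq:norm-approximates} and Lemma \ref{lem:lie-norm-continuity}, at the cost of a factor $2C_0$, restricting to a sufficiently small neighborhood of $e$ in $H$. A second point is to verify uniform convergence of the product of $n$ approximate one-parameter subgroups, for which I would iterate \eqref{item:ga-3} $n$ times inside $B_i^{n}$.
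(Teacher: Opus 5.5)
Your argument is correct and is essentially the paper's proof: orthonormal bases for $\Vert\cdot\Vert_i$, the uniform constant $C=4C_0^2$ from \eqref{eq:enabler} feeding Lemma \ref{lem:lie-ift}, limit one-parameter subgroups via Proposition \ref{pro:convergence-of-ops}, and Lemma \ref{lem:lie-norm-continuity} with the factor $2C_0$ to show that every point of $H$ near $e$ lies in the image of $t\mapsto \exp(t_1\vv_1)\cdots\exp(t_n\vv_n)$. The paper concludes by contradiction, using Sard's theorem to pick a point of $H$ outside that image, whereas you bound the Hausdorff dimension of the smooth image of a cube; these are equivalent ways to finish, and your first paragraph, which argues the reverse inequality (the paper's Lemma \ref{lem:second-half}), is a detour you can drop.
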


\begin{proof}
Set $k : = \dim (G_i)$.  For each $i$, let $\{ \vv_{i,1}, \ldots, \vv_{i,k}  \} \subset \mathfrak{g}_i$ be an $\Vert \cdot \Vert _i$-orthonormal basis.  By Proposition \ref{pro:convergence-of-ops}, after passing to a subsequence, for each $j$, the  one-parameter subgroups $\gamma_{i,j} : \mathbb{R} \to G_i$ given by $\gamma_{i,j} (t) : = \exp (t \vv _{i,j})$ converge as $i \to \infty$ to a one-parameter subgroup $\gamma _j : \mathbb{R} \to G$ of the form $\gamma _j (t)  = \exp (t \vv _j ) $ with $\vert \vv _j \vert _B  \in [ \frac{1}{2C_0} ,  1  ] $.  

Define maps $\Theta _i : \mathbb{R} ^k \to G_i$ and $\Theta : \mathbb{R} ^k \to G $ by
\begin{align*}
    \Theta _i (t_1, \ldots , t_k) & : = \exp (t_1 \vv_{i,1})\cdots \exp (t_k \vv_{i,k} ) , \\
     \Theta (t_1, \ldots , t_k) & : = \exp (t_1 \vv_1)\cdots \exp (t_k \vv_k ) . 
\end{align*}
Let  $\delta > 0 $ be given by Lemma \ref{lem:lie-ift} with $C  = 4 C_0^2 $ and  $\varepsilon  = 1$.  Seeking a contradiction, assume $\dim (\mathfrak{h}) > k$. Then by Sard's Theorem, there is 
\[ h_0 \in H \cap \exp (B_{\delta / 2 C_0}^{\mathfrak{g}}(0)) \, \backslash \, \Theta (\mathbb{R}^k). \] 
Pick $\ww _i \in \mathfrak{z}_i$ with $\phi_i (\exp (\ww_i)) \to h_0$. By Lemma \ref{lem:lie-norm-continuity}, for $i$ large enough we have
\[   \Vert \ww _i \Vert _i \leq 2 C_0 \vert \ww _i \vert _{B_i} < \delta .          \]
Then, by our choice of $\delta $,  there are $t_{i,1} , \ldots , t_{i,k } \in ( -1  ,  1  ) $ with 
\[   \Theta_i ( t_{i,1} , \ldots , t_{i,k} ) =     \exp (\ww _i ) .     \]
After passing to a subsequence once more, we can assume $t_{i,j} \xrightarrow[i \to \infty ]{} t_j \in [- 1   ,  1  ]$ for each $j \in \{ 1, \ldots , k \}$. Then 
\begin{align*}
    \Theta (t_1, \ldots , t_k) & = \exp (t_1 \vv_1)\cdots \exp (t_k \vv_k ) \\
    & = \lim_{i \to \infty} \phi_i ( \exp (t_{i,1} \vv_{i,1} ) \cdots \exp (t_{i,k} \vv_{i,k} ) ) \\
    & = \lim_{i \to \infty} \phi_i (  \Theta _i  (t_{i,1} , \ldots , t_{i,k})  ) \\
    & = \lim_{i \to \infty } \phi_i ( \exp (\ww _i ) ) \\
    & = h_0 ,
\end{align*}
contradicting our choice of $h_0$.  \end{proof}

Now we aim to prove $\dim (\mathfrak{h}) \geq \dim (G_i)$. In view of Theorem \ref{thm:nsz-local}, we seek to construct local good approximations $\phi_i ' : H_i \to H$.

Equip $G$ with a metric $d : G \times G \to \mathbb{R}$ compatible with its topology.  By Lemma \ref{lem:h-i-to-h}, there is a sequence $\delta_i \to 0$ such that the Hausdorff distance between $\phi_i (H_i) $ and $H $ is less than $\delta_i$. Then define 
\[   \phi' _i :  H_i \to H    \] 
to be any map with 
\begin{equation}\label{eq:phi-close-to-original}
d (  \phi'_i (x) , \phi_i (x)  ) < \delta _i    
\end{equation}
for all $x \in H_i$.

Let $\varepsilon \in (0,r_0 ) $ and define $A_i \subset G_i$ and $A \subset H$ as
\begin{align*}    
    A_i & : = \exp (\{  \vv \in \mathfrak{g}_i \, \vert \, \vert \vv \vert _{B_i } < \varepsilon \} ) ,\\
     A &  : = \exp ( \{ \vv \in \mathfrak{h} \, \vert \, \vert \vv \vert _{B } < \varepsilon \} ) .       
\end{align*}
By  \eqref{eq:h-i-norm-compatible} and Lemma \ref{lem:bch-quant}, we can choose  $\varepsilon$  so that  $\overline{A}^ {200} \subset H $ and $\overline{A}^{200}_i \subset H_i $ for all $i$. 

\begin{Lem}\label{lem:phi-prime-local-good}
The maps $\phi_i ' : H _i \to H  $ are local good approximations with regular neighborhoods $A_i\subset H_i$ and $A\subset H$.  Moreover, the sequence $H_i$ has the NSS property.
\end{Lem}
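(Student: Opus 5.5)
The plan is to verify properties (I$_{\text{loc}}$)--(V$_{\text{loc}}$) one at a time. The tool throughout is that $\phi_i'$ differs from $\phi_i$ by at most $\delta_i \to 0$, by \eqref{eq:phi-close-to-original}. Every limit statement about $\phi_i'$ therefore reduces to the same statement about $\phi_i$. We also need a precise description of $A_i$ and $A$: $A_i$ consists of the elements $\exp(\vv)$ with $\vert \vv \vert_{B_i} < \varepsilon$, and these lie in $H_i$ by \eqref{eq:h-i-norm-compatible}, since $\varepsilon < r_0/2$ after shrinking $\varepsilon$ if needed. Likewise $A = H \cap \exp(\{\vert\vv\vert_B<\varepsilon\})$, because $H$ is a submanifold with tangent space $\mathfrak{h}$ near $e$ (Proposition \ref{pro:lie-local}). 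The sets $A_i$ and $A$ are open in $H_i$ and $H$, symmetric, and precompact, and they are multiplicative by the choice $\overline{A}^{200}\subset H$ and $\overline{A_i}^{200}\subset H_i$.

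The key bridge is Lemma \ref{lem:lie-norm-continuity}. If $\ww_i\in\mathfrak{z}_i$ with $\vert\ww_i\vert_{B_i}\le 1$ and $\phi_i(\exp\ww_i)\to\exp\ww$, then $\vert\ww_i\vert_{B_i}\to\vert\ww\vert_B$.

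\begin{itemize}
\item \textbf{No expansion (II$_{\text{loc}}$).} Take $\exp(\ww_i)\in A_i$. Along a subsequence $\phi_i(\exp \ww_i)\to \exp\ww$ with $\vert\ww\vert_B\le\varepsilon$, and the limit lies in $\overline{H}$, hence in $\overline{A}$.
\item \textbf{No compression (IV$_{\text{loc}}$).} Take $g_i\in A_i^8\subset H_i\subset Z_i$, so $g_i=\exp\ww_i$ with $\ww_i\in\mathfrak{z}_i$. If $\phi_i'(g_i)\to g \in K\subset A$, then $g=\exp\ww$ with $\vert\ww\vert_B<\varepsilon$. Lemma \ref{lem:lie-norm-continuity} then gives $\vert\ww_i\vert_{B_i}<\varepsilon$ eventually, so $g_i\in A_i$. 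Here one must check that the logarithm $\ww_i$ has $\vert\ww_i\vert_{B_i}\le 1$. This follows from \eqref{eq:h-i-norm-compatible}.
\item \textbf{Almost surjectivity (I$_{\text{loc}}$).} Given $a=\exp\ww\in A$, Lemma \ref{lem:h-i-to-h} gives $h_i\in H_i$ with $\phi_i(h_i)\to a$. The same continuity argument puts $h_i$ in $A_i$ eventually.
\item \textbf{Almost homomorphism (III$_{\text{loc}}$).} Products of elements of $A_i^8$ are actual products in $G_i$ and lie in $A_i^{16}\subset B_i^{16}$. So this is inherited from property \eqref{item:ga-3} of $\phi_i$, up to the errors $\delta_i$ and uniform continuity of multiplication on compact sets.
\item \textbf{Almost continuity (V$_{\text{loc}}$).} This follows from Lemma \ref{lem:small-norm-is-good}: the sets $\exp(\{\vert\vv\vert_{B_i}<\delta\})$ are identity neighbourhoods in $H_i$ mapped into any prescribed neighbourhood.
\end{itemize}

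Each of these arguments is by contradiction, passing to subsequences and using Lemma \ref{lem:convergent-sequences}.

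For the NSS property, suppose $K_i\le H_i$ (in the local sense) is a sequence of small subgroups contained in $A_i$. Then $K_i$ is also a sequence of subgroups of $G_i$ contained in $B_i$, and $\phi_i(k_i)\to e$ for every $k_i \in K_i$, because $\phi_i'$ and $\phi_i$ are $\delta_i$-close. So $K_i$ is small for the original good approximations, after relating regular neighbourhoods via Proposition \ref{pro:zoom-i}. By the NSS hypothesis on $G_i$, $K_i$ is eventually trivial. The subtle point is that in the local group $H_i$ a "subgroup" inside $A_i$ must be closed under products in $G_i$. This holds because $A_i^2\subset H_i$ and the local product agrees with the product of $G_i$.

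I expect the main obstacle to be the no-compression property together with the bookkeeping that elements of $A_i^8$ are exponentials of vectors of controlled $\vert\cdot\vert_{B_i}$-norm, so that Lemma \ref{lem:lie-norm-continuity} applies. This needs Lemma \ref{lem:bch-quant} with $C=4C_0^2$ and the comparison \eqref{eq:norm-approximates} to keep all logarithms inside $\mathfrak{z}_i$ with $\vert\cdot\vert_{B_i}\le 2r_0$. I would handle it first, fixing $\varepsilon$ small enough at the outset so that all products of up to $200$ factors stay within the range where these estimates hold.
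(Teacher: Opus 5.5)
Your proposal is correct and follows the same route as the paper. The paper's proof is a two-line sketch: each property (I$_{\text{loc}}$)--(V$_{\text{loc}}$) for $\phi_i'$ is transferred from $\phi_i$ using the $\delta_i$-closeness \eqref{eq:phi-close-to-original} together with Lemma \ref{lem:lie-norm-continuity}, and NSS follows because subgroups that are small with respect to $(\phi_i', A_i)$ are small with respect to $(\phi_i, B_i)$. Your write-up supplies the details the paper leaves implicit, namely the descriptions of $A_i$ and $A$ via \eqref{eq:h-i-norm-compatible} and the local structure of $H$, and the bound $\vert \ww_i \vert_{B_i} \le 1$ needed to invoke Lemma \ref{lem:lie-norm-continuity}.
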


\begin{proof}
Using \eqref{eq:phi-close-to-original}, each property in the definition of local good approximation for $\phi_i ' $ follows from the corresponding property for $\phi_i$ together with Lemma \ref{lem:lie-norm-continuity}.  By \eqref{eq:phi-close-to-original}, any sequence of small subgroups $K_i \leq H_i$ with respect to the pairs $(\phi_i ' , A_i)$ is also small with respect to the pairs $(\phi_i, B_i)$, so the sequence $H_i$ has the NSS property. 
\end{proof}

We are now  ready to prove the other desired inequality and finish the proof of Theorem \ref{thm:main-nss}.

\begin{Lem}\label{lem:second-half}
    $\dim (\mathfrak{h}) \geq \dim (G_i)$. 
\end{Lem}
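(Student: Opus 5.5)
The plan is to apply Theorem \ref{thm:nsz-local} to the maps $\phi_i' : H_i \to H$. By Lemma \ref{lem:phi-prime-local-good}, these are local good approximations with regular neighborhoods $A_i \subset H_i$ and $A \subset H$, and the sequence $H_i$ has the NSS property. It remains to check that $H_i$ and $H$ are local Lie groups and to identify their dimensions.

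\textbf{The limit $H$.} By Lemma \ref{lem:h-normal} and Proposition \ref{pro:lie-local}, $H$ is a smooth submanifold of $G$ of dimension $\dim(\mathfrak{h})$, closed in $U$. Restricting the multiplication of $G$ to pairs whose product lies in $U$ makes $H$ a local Lie group; this uses the sub-local group property $h_1h_2 \in U \Rightarrow h_1h_2 \in H$. Its dimension is $\dim(\mathfrak{h})$.

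\textbf{The sets $H_i$.} I will show that $H_i$ is an open subset of $G_i$, so that it is a local Lie group of dimension $\dim(G_i)$. Indeed, $H_i = Z_i \cap U_i$, with $U_i$ open. By \eqref{eq:h-i-norm-compatible} and \eqref{eq:norm-approximates}, every element of $H_i$ is $\exp(\vv)$ with $\Vert \vv \Vert_i \le 4C_0 r_0 < 1$. This $\vv$ lies in the region where $\exp$ is a local diffeomorphism, by the proof of Lemma \ref{lem:bch-quant} with $C = 4C_0^2$, since $4C_0 r_0 \le \tfrac{1}{64 C_0^2} < \tfrac{1}{2C}$. Hence $\exp$ of the open ball $\{\Vert \vv \Vert_i < 1\}$ is open near each point of $H_i$ and is contained in $Z_i$, so $H_i$ contains a neighborhood of each of its points in $U_i$. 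The one subtlety is that $h \in H_i$ might arise as $\exp$ of a boundary vector with $\Vert \cdot \Vert_i = 1$ while also being $\exp$ of a small vector. This causes no problem: we only need some representative in the open ball, and the small representative provides one. The multiplication on $H_i$ is again inherited from $G_i$ via the sub-local group property with associated neighborhood $U_i$ (Lemma \ref{lem:h-i-normal}). Thus $H_i$ is a local Lie group with $\dim(H_i) = \dim(G_i)$.

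\textbf{Conclusion.} Theorem \ref{thm:nsz-local} now gives $\dim(\mathfrak{h}) = \dim(H) \ge \limsup_i \dim(H_i) = \dim(G_i)$, recalling that we passed to a subsequence on which $\dim(G_i)$ is constant. The main obstacle I expect is the bookkeeping in the openness argument, i.e. checking that $H_i$ really is an open piece of $G_i$ rather than a lower-dimensional set, together with confirming that the local group structures on $H_i$ and $H$ satisfy the hypotheses of Theorem \ref{thm:nsz-local}. In particular, the regular neighborhoods $A_i, A$ must be multiplicative inside these local groups, and this is exactly what the choice $\overline{A}^{200} \subset H$ and $\overline{A}_i^{200} \subset H_i$ guarantees.
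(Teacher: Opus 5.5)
Your proposal is correct and follows essentially the same route as the paper. The paper also applies Theorem \ref{thm:nsz-local} to the local good approximations $\phi_i' : H_i \to H$ of Lemma \ref{lem:phi-prime-local-good}, using $\dim(H_i)=\dim(G_i)$ because $H_i$ is open in $G_i$. Your openness argument (via the local-diffeomorphism region of $\exp$ from the proof of Lemma \ref{lem:bch-quant}) and your local Lie group structure on $H$ (from Proposition \ref{pro:lie-local}) supply details the paper leaves implicit.
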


\begin{proof}
Since $H_i \subset G_i$ is open for each $i$, we have $\dim (G_i) = \dim (H_i)$. Then  the result follows from Lemma \ref{lem:phi-prime-local-good} and Theorem \ref{thm:nsz-local}.
\end{proof}

\begin{Lem}\label{lem:third-half}
    The Lie algebra $\mathfrak{g}/\mathfrak{h}$ is nilpotent. 
\end{Lem}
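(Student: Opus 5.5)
The plan is to carry out \textbf{Correct step 3} of Section \ref{sec:outline}: apply Proposition \ref{pro:lga-quotient} to obtain local good approximations $\psi_i : Q_i \to Q$ between the local quotients $Q_i := (G_i/H_i)_{W_i}$ and $Q := (G/H)_W$, and then apply Theorem \ref{thm:bgt-local}. For this we need the sets $Q_i$ to be discrete and $Q$ to be a local Lie group with Lie algebra $\mathfrak{g}/\mathfrak{h}$.

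\textbf{Setting up the hypotheses.} We take the good approximations $\phi_i : G_i \to G$ with regular neighborhoods $B_i$ and $B$. These are in particular local good approximations, after shrinking by Proposition \ref{pro:zoom-i} if needed so that the sets are multiplicative. We use the normal sub-local groups $H_i$ and $H$ with normalizing neighborhoods $U_i \subset B_i$ and $U \subset B$, given by Lemmas \ref{lem:h-i-normal} and \ref{lem:h-normal}.
\begin{itemize}
\item The Hausdorff convergence $\phi_i(H_i) \to H$ is Lemma \ref{lem:h-i-to-h}.
\item We choose an open symmetric $W$ with $W^6 \subset U$ and $U^6$ well defined in $G$ (shrinking $r_0$ if needed), and we let $W_i$ approximate $W$ via Lemma \ref{lem:approximate}.
\item Since $W_i$ approximates $W$, for $i$ large we get $W_i^6 \subset U_i$, by the no compression property together with the compactness argument of Lemma \ref{lem:quotient-compactness}.
\item Finally, we pick $B' \subset B$ open symmetric with $\overline{B'}^{200} \subset W$, and approximating sets $B_i'$.
\end{itemize}
Proposition \ref{pro:lga-quotient} then gives local good approximations $\psi_i : Q_i \to Q$.

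\textbf{Discreteness of $Q_i$.} This is the key point, and I expect it to be the main obstacle. By \eqref{eq:h-i-norm-compatible}, $H_i$ contains $\exp$ of a $\vert\cdot\vert_{B_i}$-ball of radius $r_0/2$ in $\mathfrak{g}_i$. By \eqref{eq:norm-approximates}, this contains a genuine $\Vert\cdot\Vert_i$-ball, so $H_i$ is an identity neighborhood in $G_i$. Hence each equivalence class $xH_i \cap W_i$ is open in $W_i$, so the quotient topology on $Q_i$ is discrete.

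Two subtleties need checking. First, the equivalence classes are well defined inside $W_i$, which is where $W_i^6 \subset U_i$ is used. Second, $x(H_i \cap W_i^2)$ must genuinely be open. This should follow because $H_i$ is open and closed in $U_i$, and the sub-local group property makes left translates by elements of $W_i$ stay within $U_i$.

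\textbf{Lie structure of $Q$ and conclusion.} By Proposition \ref{pro:lie-local}, $H$ is a normal sub-local group of the Lie group $G$, so $\mathfrak{h}$ is an ideal. The quotient $(G/H)_W$ is a local Lie group whose Lie algebra is naturally $\mathfrak{g}/\mathfrak{h}$; its smooth structure comes from a slice transverse to the submanifold $H$. The metrizability of $G$ is automatic. Theorem \ref{thm:bgt-local}, applied to the discrete local groups $Q_i$, then yields that $\mathfrak{g}/\mathfrak{h}$ is nilpotent.
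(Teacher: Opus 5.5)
Your proposal is correct and follows essentially the paper's argument: Lemma \ref{lem:approximate} and Proposition \ref{pro:lga-quotient} give local good approximations $\psi_i : (G_i/H_i)_{W_i} \to (G/H)_W$, the quotients $(G_i/H_i)_{W_i}$ are discrete because $H_i$ is open, and Theorem \ref{thm:bgt-local} together with Proposition \ref{pro:lie-local} identifies the resulting nilpotent Lie algebra with $\mathfrak{g}/\mathfrak{h}$. Your extra verifications fill in details the paper leaves implicit; for the check that $W_i^6 \subset U_i$, choose $W$ with $\overline{W}^6 \subset U$, not merely $W^6 \subset U$, since the compactness argument only places limit points in $\overline{W}^6$ and the no-compression property needs a compact subset of $U$.
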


\begin{proof}    
By Lemma \ref{lem:approximate} and Proposition \ref{pro:lga-quotient},  one can construct open symmetric sets  $W_i \subset B_i$ and $W \subset B $  and local good approximations 
\[ \psi _i :  (G_i/H_i)_{W_i}    \to (G/H)_W .                \]
Since $H_i \subset G_i $ is open for each $i$, the local groups $(G_i/ H_i) _{W_i}$ are discrete. Then Theorem \ref{thm:bgt-local} implies that the Lie algebra of $(G/H)_W$ is nilpotent. By Proposition \ref{pro:lie-local}, this nilpotent Lie algebra is isomorphic to   $\mathfrak{g}/\mathfrak{h}$.  
\end{proof}

\begin{proof}[Proof of Theorem \ref{thm:main-nss}]
Using the notation of Sections \ref{sec:ops-convergence} and \ref{sec:h-construction}, define $\mathfrak{h}$ as in \eqref{eq:h-definition}. By Lemmas \ref{lem:first-half}, \ref{lem:second-half}, and \ref{lem:third-half},  $\mathfrak{h}$ satisfies the desired properties. 
\end{proof}

\begin{Rem}\label{rem:main-local}
    Theorem \ref{thm:main-nss} also holds for local good approximations with the NSS property. The same proof carries over with minimal modifications. 
\end{Rem}

\begin{proof}[Proof of Theorem \ref{thm:main}]
By Theorem \ref{thm:egh-to-ga}, the maps $\phi _i : G_i \to G $ demonstrating the convergence \eqref{eq:egh} are good approximations. By Theorem \ref{thm:main-good}, there is an ideal $\mathfrak{h} \trianglelefteq \mathfrak{g}$ with $\mathfrak{g}/\mathfrak{h}$ nilpotent and satisfying \eqref{eq:h-ineq}. 

Assume $\vol (B_1 (p_i)) \geq v $ for some $v>0$. By Theorem \ref{thm:egh-to-ga}, the sequence $G_i$ has the NSS property, so by Theorem \ref{thm:main-good}, $\mathfrak{h} $ can be taken so that equality in \eqref{eq:h-ineq} holds.    \end{proof}

\section{Proofs of corollaries}\label{sec:proofs-of-corollaries}

Before we prove Corollaries \ref{cor:no-nilpotent} and \ref{cor:no-nilpotent-2}, we need a couple of preliminary results. The first one gives a criterion on when a connected Lie group is topologically perfect.

\begin{Pro}\cite{abbgnrs}\label{pro:seven-samurai}. 
    Let $G$ be a connected Lie group. Then $G$ is not topologically perfect if and only if for any  compact set $K \subset G$ and any open set $U \subset G$, there is a closed proper subgroup $H \leq G$ with $K \subset HU$. 
\end{Pro}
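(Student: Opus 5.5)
The plan is to prove the two implications separately, passing to the abelianization for the forward direction and arguing by compactness in the Chabauty topology for the converse.

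\textbf{($\Rightarrow$).} Assume $G$ is not topologically perfect. Then $N:=\overline{[G,G]}$ is a proper closed normal subgroup, and $A:=G/N$ is a connected abelian Lie group of positive dimension, so $A\cong \mathbb{R}^a\times \mathbb{T}^b$ with $a+b\geq 1$. Let $p:G\to A$ be the projection, which is open.

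Given $K$ compact and $U$ open nonempty, pick $u\in U$ and a small product neighborhood $V$ of $0$ in $A$ with $p(u)+V\subset p(U)$. I then choose a proper closed subgroup $L\leq A$ that is $V$-dense on the compact set $p(K)$. If $a\geq 1$, take $L=\eta\mathbb{Z}\times\mathbb{R}^{a-1}\times\mathbb{T}^b$ with $\eta$ smaller than the width of $V$ in the first coordinate. If $a=0$, take $L=(\tfrac1m\mathbb{Z}/\mathbb{Z})\times\mathbb{T}^{b-1}$ with $m$ large. In both cases $p(K)\subset L+p(U)$.

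Set $H:=p^{-1}(L)$. It is closed, proper, and contains $N$. Since $N$ is normal,
\[
p^{-1}(L+p(U)) = HNU = HU ,
\]
and therefore $K\subset HU$.

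\textbf{($\Leftarrow$).} Assume $G$ is topologically perfect and, for contradiction, that the covering property holds for every pair $(K,U)$. Fix $K$ a compact identity neighborhood, so that $K$ generates $G$. Take $U_n$ a shrinking basis of identity neighborhoods, and let $H_n$ be the closed proper subgroups with $K\subset H_nU_n$. In the Chabauty topology, after passing to a subsequence, $H_n\to H_\infty$, and $H_\infty\supset K$. Hence $H_\infty=G$.

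The inclusions $H_n\hookrightarrow G$ should then be good approximations, with regular neighborhoods $H_n\cap A'$ for a small open set $A'$. By Theorem \ref{thm:main-good} there is an ideal $\mathfrak h\trianglelefteq\mathfrak g$ with $\mathfrak g/\mathfrak h$ nilpotent and $\dim\mathfrak h\leq\limsup\dim H_n$. Two cases arise.
\begin{itemize}
\item If $\dim H_n=\dim G$, then $H_n$ is open in the connected group $G$, so $H_n=G$, which contradicts properness.
\item If $\dim H_n<\dim G$, then $\mathfrak h\neq\mathfrak g$, so $\mathfrak g/\mathfrak h$ is a nonzero nilpotent Lie algebra and has nonzero abelianization. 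Consequently $[\mathfrak g,\mathfrak g]+\mathfrak h\neq \mathfrak g$.
\end{itemize}

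\textbf{Main obstacle.} The hard step is to turn the Lie algebra statement $[\mathfrak g,\mathfrak g]+\mathfrak h\neq\mathfrak g$ into the group statement $\overline{[G,G]}\neq G$. The difficulty is that the analytic subgroup with Lie algebra $[\mathfrak g,\mathfrak g]+\mathfrak h$ need not be closed. I would first try to exploit that $\mathfrak h$ arises from the subgroups $H_n$ themselves: the nilpotent quotient should be realized by an honest closed normal subgroup of $G$ (the limit of the identity components of $H_n$, using Lemma \ref{lem:h-normal}) with nilpotent, hence non-perfect, quotient group. Failing that, I would argue directly with Haar measure: a proper closed subgroup of positive codimension makes $H_nU_n\cap K$ have measure tending to $0$, which contradicts $K\subset H_nU_n$.
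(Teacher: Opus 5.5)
Your $(\Rightarrow)$ argument is correct (it even gives $HU=G$), and it is the only direction the paper uses, in the proof of Corollary \ref{cor:no-nilpotent}. The paper itself does not prove the proposition; it quotes \cite{abbgnrs}.

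The gap is in $(\Leftarrow)$, at exactly the step you flag, and it is not a technicality. The inclusions $H_n\hookrightarrow G$ are indeed good approximations, even with the NSS property, since $G$ has no small subgroups. Theorem \ref{thm:main-good} plus your dimension dichotomy then gives $\mathfrak g\neq[\mathfrak g,\mathfrak g]$. But this Lie-algebra statement does not contradict topological perfectness of $G$.

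Here is a counterexample. Let $z$ generate the center of $\widetilde{\mathrm{SL}_2(\mathbb R)}$, let $\theta$ be irrational, and set $G=(\widetilde{\mathrm{SL}_2(\mathbb R)}\times S^1)/\langle (z,e^{2\pi i\theta})\rangle$. Then $[G,G]$ is the image of $\widetilde{\mathrm{SL}_2(\mathbb R)}$. This image contains the images of the elements $(1,e^{-2\pi i k\theta})$, hence is dense, so $G$ is topologically perfect. Yet $\mathfrak g=\mathfrak{sl}_2(\mathbb R)\oplus\mathbb R$ is not perfect.

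So what you have actually proved is only the weaker statement that the covering property fails when $\mathfrak g$ is perfect. To reach $\overline{[G,G]}\neq G$ you must use the closedness of the $H_n$ at the group level. The Lie-algebra output of Theorem \ref{thm:main-good} cannot do that for you.

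Neither proposed repair supplies this.

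\emph{Identity components.} The Chabauty limit $L$ of the identity components $(H_n)_0$ is a closed normal subgroup of $G$. Granting that its Lie algebra contains $\mathfrak h$, the case $L\neq G$ would be fine: $G/L$ would be a nontrivial connected nilpotent group, and such a group is never topologically perfect. But nothing in your argument excludes $L=G$. This is precisely the phenomenon behind Example \ref{ex:bad-approximation}: closed circles of slope $n$ in $\mathbb T^2$ converge in the Chabauty topology to $\mathbb T^2$. Lemma \ref{lem:h-normal} does not help, because it is purely local. It produces a normal sub-local group with tangent space $\mathfrak h$, and the analytic subgroup of $\mathfrak h$ may be dense, as the image of $\widetilde{\mathrm{SL}_2(\mathbb R)}$ is above. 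Ruling out $L=G$ for topologically perfect $G$ is the real content of $(\Leftarrow)$, and it needs structural input. In the example, for instance, one would argue as follows. A $3$-dimensional subalgebra of $\mathfrak{sl}_2(\mathbb R)\oplus\mathbb R$ close to $\mathfrak{sl}_2(\mathbb R)$ is the graph of a homomorphism $\mathfrak{sl}_2(\mathbb R)\to\mathbb R$, hence equals $\mathfrak{sl}_2(\mathbb R)$. Then $(H_n)_0$ would be the dense, non-closed image of $\widetilde{\mathrm{SL}_2(\mathbb R)}$, which is impossible.

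\emph{Haar measure.} This alternative is false outright. It never uses perfectness, so it would also rule out the $\mathbb T^2$ example and contradict your own $(\Rightarrow)$ direction. Indeed, the hypothesis $K\subset H_nU_n$ already gives $\mu(H_nU_n\cap K)=\mu(K)$ for every $n$.
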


The following lemma allows one to replace the groups in a sequence of good approximations by groups given by Proposition \ref{pro:seven-samurai}. 

\begin{Lem}\label{lem:restrictions-of-dense}
Let $G_i$ be a sequence of locally compact Hausdorff groups, $G$ a metrizable locally compact group, and open symmetric subsets $H_i \subset G_i$ and $H \subset G$.  Let $\phi_i : H_i \to H$ be a sequence of local good approximations with regular neighborhoods $A_i \subset H_i$ and $A \subset H$. Assume that for each $i$, there is a sequence of subgroups $H_{i,j} \subset G_i$ such that for each  identity neighborhood $V \subset G_i$, one has $A_i \subset H_{i,j} V$ for $j$ large enough. If for each $i$, we choose $j (i) $ sufficiently large, then the  restrictions 
 \begin{equation}\label{eq:restrictions-of-dense}
          \phi_i : H_i \cap H_{i,j(i)} \to H          
    \end{equation}
    are local good approximations with regular neighborhoods $A_i \cap H_{i,j(i)} \subset H_{i,j (i)}$ and $A \subset H$. 
\end{Lem}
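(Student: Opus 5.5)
The idea is to treat each property (I$_{\text{loc}}$--V$_{\text{loc}}$) for the restricted maps as a perturbation of the same property for $\phi_i$ on $H_i$. The density hypothesis says that every point of $A_i$ is arbitrarily close to a point of $A_i\cap H_{i,j}$ once $j$ is large. So I would choose $j(i)$ so that $A_i$ is contained in $H_{i,j(i)}V_i$, where $V_i\subset G_i$ is a suitably small identity neighborhood. The sets $V_i$ come from almost continuity (V$_{\text{loc}}$): fix a decreasing countable basis of symmetric identity neighborhoods $U_m\subset G$ (possible since $G$ is metrizable). For each $i$, take $V_i$ to be a symmetric identity neighborhood with $\phi_i(V_i)\subset U_{m(i)}$, where $m(i)\to\infty$ slowly enough that this inclusion holds for all large $i$ (a diagonal argument over $m$). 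Shrinking $V_i$ further, I may also assume $V_i\subset A_i$ and $A_i V_i\subset A_i^2$, so all products stay in the region where (III$_{\text{loc}}$) applies. Then choose $j(i)$ with $A_i\subset H_{i,j(i)}V_i$.

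Next I check the easy properties. Properties (II$_{\text{loc}}$) and (III$_{\text{loc}}$) pass to restrictions trivially, since $\phi_i(A_i\cap H_{i,j(i)})\subset\phi_i(A_i)$ and $(A_i\cap H_{i,j(i)})^8\subset A_i^8$. The same holds for (IV$_{\text{loc}}$): $\phi_i^{-1}(K)\cap(A_i\cap H_{i,j(i)})^8\subset A_i\cap H_{i,j(i)}$, because products of elements of the subgroup stay in $H_{i,j(i)}$. For (V$_{\text{loc}}$) I take the identity neighborhoods $U_i\cap H_{i,j(i)}$ in the subspace topology. The sets $A_i\cap H_{i,j(i)}$ are open in $H_{i,j(i)}$, symmetric, and multiplicative; they are pre-compact at least after closure considerations, and if necessary I pass to closed subgroups $\overline{H_{i,j}}$, which changes nothing.

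The main step is almost surjectivity (I$_{\text{loc}}$). Given an open nonempty $U\subset A$, pick $g\in U$ and a neighborhood $W$ with $gW^2\subset U$. By (I$_{\text{loc}}$) for $\phi_i$, there are $a_i\in A_i$ with $\phi_i(a_i)\in gW$ for large $i$. Write $a_i=h_iv_i$ with $h_i\in H_{i,j(i)}$ and $v_i\in V_i$. Then $h_i=a_iv_i^{-1}$, and by (III$_{\text{loc}}$) together with $\phi_i(v_i^{-1})\in U_{m(i)}\to e$,
\[ \phi_i(h_i)\in \phi_i(a_i)\,\phi_i(v_i^{-1})\,W' \subset gW^2\subset U \]
for large $i$. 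To conclude I also need $h_i\in A_i$, not merely $h_i\in A_i^2$. This follows from (IV$_{\text{loc}}$) for $\phi_i$, applied to a compact neighborhood of $g$ inside $A$ (taking $W$ small enough that $\overline{gW^2}\subset A$).

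The main obstacle is the uniformity in the choice of $j(i)$. Almost continuity only provides neighborhoods $V_i$ for each fixed target neighborhood, so the diagonal choice of $m(i)$ must be organised carefully. Once $V_i$ is fixed, though, the density hypothesis applies for each fixed $i$, and the choice of $j(i)$ poses no difficulty.
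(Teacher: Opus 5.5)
Your proposal is correct and follows essentially the same route as the paper. The paper also uses (V$_{\text{loc}}$) with a diagonal argument to get symmetric $V_i\subset A_i$ with $\phi_i(V_i)$ shrinking to $e$, and picks $j(i)$ with $A_i\subset H_{i,j(i)}V_i$. It then proves almost surjectivity by writing $h_i=a_iv_i^{-1}$, using (III$_{\text{loc}}$), and applying (IV$_{\text{loc}}$) on a compact neighborhood of $g$ in $A$ to get $h_i\in A_i$; the remaining properties are inherited directly.

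Your extra remarks fill in details the paper calls immediate. One is that (IV$_{\text{loc}}$) survives because $H_{i,j(i)}$ is closed under products. The other is that pre-compactness of $A_i\cap H_{i,j(i)}$ really requires the $H_{i,j}$ to be closed. Passing to closures would alter the stated conclusion, but the subgroups supplied by Proposition \ref{pro:seven-samurai} in the application are closed anyway.
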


\begin{proof}
Since $G$ is metrizable, using \eqref{item:lga-5} and a diagonal argument, we can find a sequence of open symmetric subsets $V_i \subset A_i$ such that $\phi_i (x_i )  \to e_H$ for any sequence $x_i \in V_i$. For each $i$, choose $j(i)$ large enough so that $A _i \subset  H_{i,j(i)} V_i$. 

To prove that the maps \eqref{eq:restrictions-of-dense} satisfy \eqref{item:lga-1}, fix an open non-empty subset $U \subset A $. Note that we can assume $\overline{U} \subset A$.  Using the fact that the maps $\phi _i : H_i \to H$ satisfy \eqref{item:lga-1}, we can find $g_i \in A_i$ such that $\phi_i (g_i) \to g \in U$. By our choice of $j(i)$, there is a sequence $h_i \in H_{i,j(i)}$ such that $h_i^{-1} g_i \in V_i $ for all $i$. Therefore, 
\[     \phi_i (h_i) = [\phi_i (h_i) \phi_i ( h_i ^{-1}g_i  ) \phi_i (g_i ) ^{-1}    ]  \ast   [ \phi_i ( g_i )  ]  \ast [  \phi_i ( h_i ^{-1} g_i)^{-1}  ]      .   \]
By \eqref{item:lga-3}, the first and third terms converge to $e_H$, while the middle one converges to $g$. Hence  $\phi_i (h_i) \to g$ and $\phi_i (h_i) \in U $ for $i$ large enough. Since $\overline{U } \subset A$ and the maps $\phi_i : H_i \to H$ satisfy \eqref{item:lga-4}, we also have $h_i \in A_i \cap H_{i,j (i)}$ for $i$ large enough. This proves that the maps \eqref{eq:restrictions-of-dense} satisfy \eqref{item:lga-1}.  

The remaining properties (\ref{item:lga-2}-\ref{item:lga-5}) for the maps \eqref{eq:restrictions-of-dense} follow immediately from the corresponding ones for the maps $\phi_i : H_i \to H$. 
\end{proof}

Now we are ready to prove Corollaries \ref{cor:no-nilpotent} and \ref{cor:no-nilpotent-2}.

\begin{proof}[Proof of Corollary \ref{cor:no-nilpotent}]
 By Theorem \ref{thm:egh-to-ga}, the maps $\phi_ i : G_i \to G$  demonstrating the  convergence are good approximations with regular neighborhoods chosen so that the sequence $G_i$ has the NSS property.

Let $\mathfrak{g}$ be the Lie algebra of $G$ and let $\mathfrak{h} \leq \mathfrak{g}$ be given by Theorem \ref{thm:main-nss}.   Since $\mathfrak{g}$  is perfect, it does not admit non-trivial nilpotent quotients, so $\mathfrak{h} = \mathfrak{g}$. This implies 
\[   \limsup_{i \to \infty}  \dim (G_i )  = \dim (\mathfrak{h}) = \dim (G) .             \]
Since the same conclusion holds for any subsequence, we have $\dim (G_i) = \dim (G)$ for $i$ large enough. 

Let $H_i \subset G_i$ and $H \subset G $ be the sub-local groups constructed in the proof of Theorem \ref{thm:main-nss} (see Section \ref{sec:h-construction}) and consider the local good approximations $\phi_i ' : H_i \to H$ with regular neighborhoods $A_i \subset H_i$ and $A \subset H$ constructed in Section \ref{sec:properties-of-h} (see Lemma \ref{lem:phi-prime-local-good}).  

Note that for each $i$,  the subset $H_i \subset G_i $ contains an identity neighborhood and is contained in the image of the exponential map, so the subgroup  $G_i ' : = \langle H_i \rangle \leq G_i $ coincides with the identity component of $G_i$.  Arguing by contradiction, after passing to a subsequence, we can assume  $G_i' $ is not perfect for each $i$. By Proposition \ref{pro:seven-samurai}, for each $i$ there is a sequence of closed proper subgroups $H_{i,j}  \leq G_i'  $ such that for any identity neighborhood $V \subset  G_i  $, one has $A_i \subset H_{i,j} V$ for $j$ large enough.

By Lemma \ref{lem:restrictions-of-dense}, if for each $i$ we choose $j(i)$ sufficiently large, the restrictions 
\[      \phi _i '  : H_i \cap  H_{i,j(i)}   \to    H  \]
 are local good approximations.  Note that any sequence of small subgroups $K_i \leq H_i  \cap H_{i,j(i)}$ consists of small subgroups $K _i  \leq H_i$, so the sequence $H_i \cap H_{i,j(i)}$ also has the NSS property. 

 Repeating the proof of Theorem \ref{thm:main-nss} (see Remark \ref{rem:main-local}), we obtain an ideal $\mathfrak{h}' \trianglelefteq \mathfrak{h}$ with $\mathfrak{h} / \mathfrak{h} ' $ nilpotent and 
 \[    \dim (\mathfrak{h}' ) = \limsup_{i \to \infty} \dim (H_{i} \cap H_{i,j(i)}) <  \limsup _{i \to \infty} \dim (H_i) = \dim (\mathfrak{h}) .                        \]
This contradicts the fact that $\mathfrak{h} $ is perfect and finishes the proof. 
\end{proof}

\begin{proof}[Proof of Corollary \ref{cor:no-nilpotent-2}]
    By \cite[Theorem C]{nsz}, there is a sequence of good approximations $\phi_i : G_i \to G $ with regular neighborhoods $A_i \subset G_i $ and $A \subset G$ with the property that any sequence of small subgroups $H_i \leq G_i$ with respect to the pairs $(\phi_i , A_i)$ is also small as in Remark \ref{rem:small}.  

    Let $O_i : = \langle A_i \rangle$, $H_i \trianglelefteq O_i$ the sequence of small subgroups given by Theorem  \ref{thm:largest-small}, and $G_i ' : = O_i / H_i $. By Lemma \ref{lem:small-quotient}, there is a sequence of good approximations $\phi_i ' : G_i ' \to G$ so that the sequence $G_i'$ has the NSS property (see the proof of Theorem \ref{thm:main-good} in Section \ref{sec:to-nss}).   Arguing as in the proof of Corollary \ref{cor:no-nilpotent}, the identity component of $G_i'$ is topologically perfect and $\dim (G_i' ) = \dim (G)$ for $i$ large enough. 
\end{proof}

\section*{Acknowledgements}

 The author would like to thank Logan Richard, Christine Escher,  Jiayin Pan, Jes\'us N\'u\~nez-Zimbr\'on, and Jaime Santos-Rodr\'iguez for helpful discussions. During the preparation of this paper, the author was a Postdoctoral Scholar at Oregon State University.

\printbibliography

\end{document}